\definecolor{dmagenta}{rgb}{.4,.1,.5}
\definecolor{dblue}{rgb}{.0,.0,.5}
\definecolor{mblue}{rgb}{.0,.0,.8}
\definecolor{ddblue}{rgb}{.0,.0,.4}
\definecolor{dred}{rgb}{.6,.0,.0}
\definecolor{dgreen}{rgb}{.0,.5,.0}
\definecolor{Eeom}{rgb}{.0,.0,.5}
\newtheorem{lemma}{Lemma}[section]
\newtheorem{theorem}{Theorem}[section]
\newtheorem{proposition}{Proposition}[section]
\newtheorem{corollary}{Corollary}[section]
\theoremstyle{definition}
\newtheorem{definition}{Definition}[section]
\theoremstyle{remark}
\newtheorem{example}{Example}[section]
\newtheorem{remark}{Remark}[section]
\numberwithin{equation}{section}
\crefname{section}{Section}{Sections}
\crefname{subsection}{Subsection}{Subsections}
\crefname{condition}{Condition}{Conditions}
\crefname{hypothesis}{Hypothesis}{Conditions}
\crefname{assumption}{Assumption}{Assumptions}
\crefname{lemma}{Lemma}{Lemmas}
\crefname{claim}{Claim}{Claims}
\Crefname{figure}{Figure}{Figures}
\DeclareRobustCommand\widecheck[1]{{\mathpalette\@widecheck{#1}}}
\def\@widecheck#1#2{%
    \setbox\z@\hbox{\m@th$#1#2$}%
    \setbox\tw@\hbox{\m@th$#1%
       \widehat{%
          \vrule\@width\z@\@height\ht\z@
          \vrule\@height\z@\@width\wd\z@}$}%
    \dp\tw@-\ht\z@
    \@tempdima\ht\z@ \advance\@tempdima2\ht\tw@ \divide\@tempdima\thr@@
    \setbox\tw@\hbox{%
       \raise\@tempdima\hbox{\scalebox{1}[-1]{\lower\@tempdima\box
\tw@}}}%
    {\ooalign{\box\tw@ \cr \box\z@}}}
\newcommand{\df}{\coloneqq}
\DeclareMathOperator{\Exp}{\mathbb{E}} %Expectation
\newcommand{\D}{\mathrm{d}}          %differential
\newcommand{\RR}{\mathbb{R}}         %Real numbers
\newcommand{\RN}{{\mathbb{R}^N}}       %R^d
\newcommand{\Ind}{\mathds{1}}            %indicator function
\newcommand{\Uadm}{\mathfrak{U}}     %Admissible Controls
\newcommand{\Sob}{\mathscr{W}}       %Sobolev space
\newcommand{\Sobl}{\mathscr{W}_{\mathrm{loc}}}  %Sobolev space (local)
\newcommand{\sB}{\mathscr{B}}    % Ball used often
\newcommand{\cC}{\mathcal{C}}     % Classes of continuous functions
\newcommand{\cI}{\mathcal{I}}    
\newcommand{\cM}{\mathcal{M}}
\newcommand{\cS}{\mathcal{S}}   
\newcommand{\abs}[1]{\lvert#1\rvert}
\newcommand{\norm}[1]{\lVert#1\rVert}
\newcommand{\plamplus}{\lambda^{\prime,+}_{1}}
\newcommand{\pplamplus}{\lambda^{\prime\prime,+}_{1}}
\newcommand{\plamminus}{\lambda^{\prime,-}_{1}}
\newcommand{\pplamminus}{\lambda^{\prime\prime,-}_{1}}
\DeclareMathOperator*{\supp}{support}
\DeclareMathOperator{\trace}{trace}
\begin{document}

\title[Generalized principal eigenvalues of nonlinear elliptic operators in $\RN$]%
{Generalized principal eigenvalues of convex nonlinear elliptic operators in $\RN$}

\author[Anup Biswas]{Anup Biswas}
\address{$^\dag$ Department of Mathematics,
Indian Institute of Science Education and Research,
Dr. Homi Bhabha Road, Pune 411008, India}
\email{anup@iiserpune.ac.in, prasunroychowdhury1994@gmail.com}

\author[Prasun Roychowdhury]{Prasun Roychowdhury}

\date{}

%%%%%%%%%%%%%%%%%%%%%%%%%%%%%%%%%%%%%%%%%%%%%%%%%%%%%%%%%%%%%%%%%%%%%%%%%%%%%%%%
\begin{abstract}
We study the generalized eigenvalue problem in $\RN$ for a general convex nonlinear  elliptic operator 
which is locally elliptic and positively homogeneous.
Generalizing Berestycki and Rossi \cite{BR15} we consider three different notions of generalized eigenvalues
and compare them. We also discuss the maximum principles and uniqueness of principal eigenfunctions.
\end{abstract}

\keywords{Fully nonlinear operators, principal eigenvalue, Dirichlet problem, half-eigenvalues, uniqueness}

\subjclass[2010]{35J60, 35P30, 35B50}

\maketitle

\section{Introduction}
This article contributes to the study of eigenvalue problem of the form  
$$F(D^2 \psi, D\psi, \psi, x)\,=\, \lambda\psi\quad \text{in}\; \RN,$$
where $F$ is a  fully nonlinear, convex, positively $1$-homogeneous elliptic operator with measurable coefficients. We establish the
existence of half (or demi) eigenvalues and characterize the set of all eigenvalues with positive and negative
eigenfunctions. This generalizes a recent work of Berestycki and Rossi \cite{BR15} which considers
linear elliptic operators. We also derive necessary and sufficient conditions for the validity maximum
principles in $\RN$ and discuss the uniqueness of principal eigenfunctions.

It has long been known that certain types of positively homogeneous operators possess 
\textit{two principal eigenvalues} (one corresponds to a positive eigenfunction and the other one
corresponds to a negative eigenfunction).  In fact, this first appeared in the work of Pucci \cite{P66}
who computed these eigenvalues explicitly for certain extremal operators in the unit ball. Later
it also appeared in a work of Berestycki \cite{B77} while studying the bifurcation phenomenon for some nonlinear Sturm-Liouville problem and Berestycki referred them as \textit{half eigenvalues}.
In connection to this work of Berestycki,
 Lions used a stochastic control approach in \cite{PL83} to characterize
these eigenvalues (he called it demi-eigenvalues) for operators which are the supremum of linear operators with $\cC^{1,1}$-coefficients, and
relate them to certain bifurcation problem. In their seminal work \cite{BNV} Berestycki, Nirenberg and Varadhan
introduced the notion of Dirichlet {\it generalized principal eigenvalue} for linear operators in {\it non-smooth} bounded domains and also established a deep connection between sign of the principal eigenvalue
and validity of maximum principles. This work serves as a founding stone of the modern eigentheory 
and has been used to study eigenvalue problems for general nonlinear operators, including degenerate ones. 
We are in particular, attracted by the works \cite{Arm09,Arm09a,BCPR,BD06,B99,BEQ,II12,IY06,SP09,QS08}.
We owe much to the work of Quass and Sirakov \cite{QS08} who study the Dirichlet principal eigenvalue 
problem for convex, fully nonlinear operators in bounded domains. 

All the above mentioned works deal with bounded domains. It is then natural to ask how the eigenteory
changes for unbounded domains. In fact, the necessity for studying eigenvalue problems in $\RN$ 
becomes important to understand the existence and uniqueness of solutions for certain semilinear elliptic
operators. See for instance, the discussion in \cite{BR06,BR15} and references therein. Principal eigenvalue
is a key ingredient to find the rate functional for the large deviation estimate
of empirical measures of diffusions \cite{DV75,DV76,KS06}. Recently, eigenvalue problems in $\RN$
have got much attention due to its application in the theory of risk-sensitive controls 
\cite{ABS19,ABBS,AB20} (some discussions are left to \cref{S-motiv}).  Our present work is motivated
by a recent study of Berestycki and Rossi \cite{BR15} where the authors consider  non-degenerate
linear elliptic operators and develop an eigentheory for unbounded domains. Monotonicity property
of the principal eigenvalue (with respect to the potentials) in $\RN$
 and its relation with the stability property of the {\it twisted process}
is established in \cite{ABS19}. The paper \cite{AB20} considers a class of semilinear elliptic operators
and obtains a variational representation of the principal eigenvalue under the assumption of 
geometric stability. The chief goal of this article is to develop an eigentheory for fully nonlinear
positively homogeneous operators. Though the results of this article are obtained in the whole space
$\RN$, one can mimic the arguments for any unbounded domains  (see \cref{R3.1} for more details).

The rest of the article is organized as follows: In the next section we introduce our model and state
our main results. We also motivate the model by providing a discussion in \cref{S-motiv}. Proofs of the
main results are given in \cref{S-proof}.

\section{Statement of main results}\label{S-main}
In this section we introduce our model and state the main results. We also provide a motivation
in \cref{S-motiv} for considering these eigenvalue problems.

\subsection{Model and assumptions}\label{S-model}
Let $\lambda, \Lambda: \RN\to (0, \infty)$ be two locally bounded functions with the property that
for any compact set $K\subset \RN$ we have
$$0\,<\, \inf_{x\in K} \lambda(x)\,\leq\, \sup_{x\in K}\Lambda(x)\,<\, \infty.$$
Choosing $K=\{x\}$ it follows from above that $0<\lambda(x)\leq\Lambda(x)$ for all $x\in\RN$. These two 
functions will be treated as the bounds of the ellipticity constants at point $x$. By $\cS_N$ we denote 
the set of all $N\times N$ real symmetric matrices. The extremal Pucci operators corresponding to
$\lambda, \Lambda$ are defined as follows. For $M\in\cS_N$ the extremal operators at $x\in\RN$ are
defined to be
\begin{align*}
\cM^+_{\lambda, \Lambda}(x, M)&=\,\sup_{\lambda(x)I\leq A\leq \Lambda(x)I} \trace(AM)
= \Lambda(x)\sum_{\beta_i\geq 0} \beta_i + \lambda(x) \sum_{\beta_i<0} \beta_i,
\\
\cM^-_{\lambda, \Lambda}(x, M)&=\,\inf_{\lambda(x)I\leq A\leq \Lambda(x)I} \trace(AM)
= \lambda(x)\sum_{\beta_i\geq 0} \beta_i + \Lambda(x) \sum_{\beta_i<0} \beta_i,
\end{align*}
where $\beta_1, \ldots, \beta_n,$ denote the eigenvalues of the matrix $M$.

Our operator $F$ is a Borel measurable function 
$F:\cS_N\times\RN\times\RR\times\RN\to \RR,$
with the following properties:
\begin{itemize}
\item[(H1)] $F$ is positively $1$-homogeneous in the variables $(M, p, u)\in\cS_N\times\RN\times\RR$ i.e., 
for every $t>0$ we have we have
$$F(tM, tp, tu, x)\, =\, t F(M, p, u, x)\quad \text{for all}\; x\in\RN\,.$$
In particular, $F(0,0,0,x)\equiv 0$.

\item[(H2)] $F$ is convex in the variables $(M, p, u)\in\cS_N\times\RN\times\RR$.
\item[(H3)] There exist locally bounded functions $\gamma, \delta:\RN\to [0, \infty)$ satisfying
\begin{align*}
	\cM_{\lambda,\Lambda}^-(x, M-N)-\gamma(x)|p-q|-\delta(x)|u-v| & \leq F(M,p,u,x)-F(N,q,v,x)
	\\
	&\leq\,\mathcal{M}_{\lambda,\Lambda}^+(x, M-N) + \gamma(x)|p-q| + \delta(x)|u-v|,
	\end{align*}
for all $M, N\in\cS_N$, $p, q\in\RN, u, v\in\RR$ and $x\in\RN$.
\item[(H4)] The function $(M, x)\in \cS_N\times\RN\mapsto F(M, 0, 0, x)$ is continuous.
\end{itemize}
Throughout this article we assume the conditions (H1)--(H4) without any further mention.
Also, observe that due to our hypotheses the operator $F$ satisfies the conditions in \cite{QS08} which
studies the Dirichlet eigenvalue problem for $F$ in bounded domains. Therefore the results of \cite{QS08}
holds for $F$ in smooth bounded domains.

Let us now define the principal eigenvalues of $F$ in a smooth domain $\Omega\subset\RN$,  possibly unbounded.
For any real number $\lambda$ we define the following sets
	\begin{align*}
	\Psi^+(F,\Omega,\lambda) &=\{\psi\in \Sobl^{2, N}(\Omega)\;:\; 
	F(D^2\psi,D\psi,\psi,x) + \lambda\psi\leq 0\; \text{and }\psi>0\; \text{ in }\; \Omega\}\,,
	\\
	\Psi^-(F,\Omega,\lambda)&=\{\psi\in \Sobl^{2, N}(\Omega) \;:\; F(D^2\psi,D\psi,\psi,x)+\lambda\psi\geq 0
	\; \text{ and }\; \psi<0\; \text{in}\; \Omega\}\,.
	\end{align*}
By sub or supersolution we always mean $L^N$-strong solution.
The (half) eigenvalues are defined to be
\begin{align*}
	\lambda_1^+(F,\Omega)&=\text{sup}\{\lambda\in\mathbb{R}\;:\;\Psi^+(F,\Omega,\lambda)\neq\emptyset\}\,,
	\\
	\lambda_1^-(F,\Omega)&=\text{sup}\{\lambda\in\mathbb{R}\;:\;\Psi^-(F,\Omega,\lambda)\neq\emptyset\}\,.
	\end{align*}
Using the convexity of $F$ and \cite[Proposition~4.2]{QS08} it follows that $\lambda_1^+(F,\Omega)\leq
\lambda_1^-(F,\Omega)<\infty$. For $F$ linear we also have $\lambda_1^+(F,\Omega)= \lambda_1^-(F,\Omega)$.
In this article we would be interested in the case $\Omega=\RN$ and for
notational economy we denote $\lambda_1^{\pm}(F,\RN)=\lambda_1^{\pm}(F)$.

\begin{remark}
We can replace the $L^N$-strong super and subsolutions in $\Psi^{\pm}(F,\Omega,\lambda)$ by
$L^N$-viscosity super and subsolutions, respectively.
\end{remark}
%%%%%%%%%%%%%%%%%%%%%%%%%%%%%%%%%%%%%%%%%%%%%%%%%%%%%%%%%%%%%%%%%%%%%%%%%%%%%%%%%%%%%%%%%%%%%%%%%%%%%%%%%%%%%
\subsection{Main results}\label{S-result}
We now state our main results. Most of the results obtained here are generalization of its linear counterpart
in \cite{BR15}.
 Recall from \cite[Theorem~1.4]{BR15} that for $F$ linear and $\lambda\in (-\infty, \lambda_1(F)]$ there
exists a positive function $\varphi\in\Sobl^{2, p}(\RN), p>N,$ satisfying 
$F(D^2\varphi, D\varphi, \varphi,x)+\lambda\varphi=0$ in $\RN$. Thus there is a continuum of eigenvalues with 
the largest one being the principal eigenvalue. This leads us to the following sets of eigenvalues.

\begin{definition}\label{D2.1}
We say $\lambda\in\RR$ is an {\it eigenvalue with a positive eigenfunction} if there exists
 $\phi\in \Sobl^{2,p}(\RN),  p> N$, such that 
		\begin{align*}
		F(D^2\phi,D\phi,\phi,x)=-\lambda\phi\quad  \text{in}\; \RN\,,\quad \text{and}\quad  \phi>0 
		\quad \text{in}\; \RN.
		\end{align*} 
We denote the collection of all eigenvalues with positive eigenfunctions by $\mathcal{E}^+$.
Analogously, we define $\mathcal{E}^-$ as the collection of all 
{\it eigenvalues with negative eigenfunctions}.
\end{definition}
Our first result generalizes \cite[Theorem~1.4]{BR15}.
\begin{theorem}\label{T2.1}
We have $\mathcal{E}_+=(-\infty,\lambda_1^+(F)]$ and $\mathcal{E}_-=(-\infty, \lambda_1^-(F)]$.
\end{theorem}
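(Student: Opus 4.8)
The plan is to prove both equalities by a squeeze argument combined with a compactness/limiting construction over an exhaustion of $\RN$ by balls; we carry it out for $\mathcal{E}_+$, the case of $\mathcal{E}_-$ being entirely analogous (work with negative sub- and supersolutions and the corresponding Dirichlet half-eigenvalue theory of \cite{QS08}). The inclusion $\mathcal{E}_+\subseteq(-\infty,\lambda_1^+(F)]$ is immediate: if $\phi$ is an eigenfunction for $\lambda$ then $\phi\in\Psi^+(F,\RN,\lambda)$, so this set is nonempty and $\lambda\le\lambda_1^+(F)$. For the reverse inclusion we may assume $\lambda_1^+(F)\in\RR$, since otherwise $(-\infty,\lambda_1^+(F)]=\emptyset$ and there is nothing to prove (any eigenfunction is in particular a positive supersolution). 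Fix the exhaustion $B_n\df B_n(0)$, $n\in\NN$, and let $\mu_n\df\lambda_1^+(F,B_n)$, with associated positive Dirichlet eigenfunction $\phi_n$ vanishing on $\partial B_n$ (both furnished by \cite{QS08}, applicable on the smooth bounded domains $B_n$). Restricting positive supersolutions from $B_{n+1}$ to $B_n$, and from $\RN$ to $B_n$, gives $\mu_1\ge\mu_2\ge\cdots\ge\lambda_1^+(F)$, so $\mu_n\searrow\mu_\infty\in[\lambda_1^+(F),\mu_1]\subset\RR$.

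Next I would show $\lambda_1^+(F)\in\mathcal{E}_+$. Normalize $\phi_n(0)=1$. By (H1) and (H3), on $B_n$ the function $\phi_n$ satisfies the two-sided Pucci inequalities $\cM^-_{\lambda,\Lambda}(x,D^2\phi_n)\le\gamma(x)\abs{D\phi_n}+(\delta(x)+\abs{\mu_n})\phi_n$ and $\cM^+_{\lambda,\Lambda}(x,D^2\phi_n)\ge-\gamma(x)\abs{D\phi_n}-(\delta(x)+\abs{\mu_n})\phi_n$, whose coefficients are locally bounded uniformly in $n$ because $\mu_n\in[\lambda_1^+(F),\mu_1]$. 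Hence, on any fixed ball $B_m$ and for all $n>m$, the Krylov--Safonov Harnack inequality holds with a constant independent of $n$, and --- using the convexity (H2) --- the interior $\Sob^{2,p}$ estimates of \cite{QS08} hold for every $p<\infty$; thus $\{\phi_n\}$ is bounded in $\Sob^{2,p}(B_m)$ and, by Harnack together with $\phi_n(0)=1$, bounded on $B_m$ from above and from below away from $0$. A diagonal extraction yields $\phi_n\to\phi$ in $C^1_{\mathrm{loc}}(\RN)$ and weakly in $\Sobl^{2,p}(\RN)$ for any $p<\infty$, with $\phi(0)=1$ and $\phi>0$ on $\RN$, so that $\phi$ qualifies as an eigenfunction. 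Passing to the limit in $F(D^2\phi_n,D\phi_n,\phi_n,x)+\mu_n\phi_n=0$ --- by stability of $L^N$-viscosity (equivalently $L^N$-strong) solutions under this convergence (see \cite{QS08}), together with $\mu_n\to\mu_\infty$ --- gives $F(D^2\phi,D\phi,\phi,x)+\mu_\infty\phi=0$ in $\RN$. Thus $\phi\in\Psi^+(F,\RN,\mu_\infty)$, so $\mu_\infty\le\lambda_1^+(F)$; combined with $\mu_\infty\ge\lambda_1^+(F)$ this forces $\mu_\infty=\lambda_1^+(F)$, and $\phi$ is a positive eigenfunction for $\lambda_1^+(F)$.

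Now fix $\lambda<\lambda_1^+(F)$. Since $\lambda<\lambda_1^+(F)\le\mu_n$, the operator $\mathcal{G}_\lambda(M,p,u,x)\df F(M,p,u,x)+\lambda u$ has $\lambda_1^+(\mathcal{G}_\lambda,B_n)>0$, hence also $\lambda_1^-(\mathcal{G}_\lambda,B_n)>0$ since $\lambda_1^+\le\lambda_1^-$; so by the Dirichlet solvability theory of \cite{QS08} there is a unique $v_n$ with $F(D^2 v_n,D v_n,v_n,x)+\lambda v_n=0$ in $B_n$ and $v_n=1$ on $\partial B_n$, and comparison with the subsolution $0$ (recall $F(0,0,0,x)\equiv0$) together with the strong maximum principle give $v_n>0$ in $B_n$. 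By $1$-homogeneity (H1), $u_n\df v_n/v_n(0)$ solves the same equation in $B_n$, with $u_n(0)=1$ and $u_n\equiv 1/v_n(0)>0$ on $\partial B_n$. Exactly as above --- Harnack (note the right-hand side is now genuinely $0$) and the interior $\Sob^{2,p}$ estimates --- $\{u_n\}$ is, on each fixed ball, bounded above and below away from $0$, so along a subsequence $u_n\to u$ in $C^1_{\mathrm{loc}}(\RN)$ (and weakly in $\Sobl^{2,p}(\RN)$) with $u(0)=1$, $u>0$, and $F(D^2 u,Du,u,x)+\lambda u=0$ in $\RN$; thus $\lambda\in\mathcal{E}_+$. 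Together with the preceding paragraph this gives $(-\infty,\lambda_1^+(F)]\subseteq\mathcal{E}_+$, completing the argument for $\mathcal{E}_+$ (and, by symmetry, for $\mathcal{E}_-$).

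The delicate step throughout is the passage to the limit over the exhaustion: one must keep the normalized approximants from blowing up or collapsing to $0$ on compact sets --- precisely what the uniform Harnack inequality provides, available because (H3) squeezes $F$ between the Pucci extremal operators with locally bounded lower-order terms --- and one must ensure the limit solves the \emph{equation}, not merely an inequality. The latter is why the construction for $\lambda<\lambda_1^+(F)$ uses constant positive boundary data (rather than a fixed positive right-hand side, which would persist in the limit) and why the endpoint is reached through the Dirichlet principal eigenfunctions; the accompanying observation that the limiting eigenfunction at the endpoint is itself a positive supersolution on all of $\RN$ is what pins $\lim_n\lambda_1^+(F,B_n)$ down to $\lambda_1^+(F)$.
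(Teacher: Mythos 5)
Your argument is correct in substance, and its first half (the endpoint) is essentially the paper's own proof: the paper also takes the Dirichlet eigenpairs $(\psi^+_{1,n},\lambda^+_1(F,\sB_n))$, normalizes at the origin, and uses the Harnack inequality of \cite[Theorem~3.6]{QS08} together with interior $\Sob^{2,p}$ estimates and the stability results of \cite{CCKS} to produce a positive eigenfunction for $\lim_n\lambda^+_1(F,\sB_n)=\lambda^+_1(F)$. Where you genuinely diverge is in showing $(-\infty,\lambda_1^+(F))\subset\mathcal{E}_+$: the paper solves, for $\tilde F=F+\lambda$ with $\lambda_1^+(\tilde F,\sB_n)>0$, the problem $\tilde F(D^2u^n,Du^n,u^n,x)=f_n$ with \emph{zero} boundary data and a nonpositive forcing $f_n$ supported in the annulus $\sB_n\setminus\overline\sB_{n-1}$, so that after normalization the functions satisfy the exactly homogeneous equation on every fixed ball $\sB_{n-1}$ and the limit is an eigenfunction; this is covered verbatim by \cite[Theorems~1.5, 1.8]{QS08} (and Theorem~1.9 for the negative case). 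You instead solve the homogeneous equation with constant boundary data $1$ and normalize. The two devices serve the same purpose (keeping the inhomogeneity out of the limit), but your existence step is the one place needing repair: the solvability theory of \cite{QS08} is formulated for zero boundary data, and reducing $v_n=1$ on $\partial\sB_n$ to that setting destroys the positive homogeneity of the operator. The claim is still true --- e.g.\ run a monotone iteration between the subsolution $0$ and the supersolution $t\,\psi$, with $\psi$ the principal eigenfunction of $F+\lambda$ on $\sB_{n+1}$ and $t$ large, using \cite[Theorem~4.6]{W09} for the nonzero-boundary-data linear steps, exactly as this paper does when it needs nonzero boundary data in the proof of its Theorem on $\pplamplus(F)$ --- but it cannot be cited off the shelf from \cite{QS08}, whereas the paper's annulus trick avoids the issue entirely (and also makes the uniqueness you invoke unnecessary). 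For $\mathcal{E}_-$ you should also record, as the paper does, that one passes to $G(M,p,u,x)=-F(-M,-p,-u,x)$ and $\phi=-\psi\geq 0$ in order to apply the Harnack inequality and the strong maximum principle; with these adjustments your route goes through.
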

It is well known that for bounded domains it is also possible to define principal eigenvalues through sub-solutions (cf. \cite[Theorem~1.2]{QS08}). However, this situation is bit different for unbounded domains.
To explain we introduce the following quantities.

\begin{align*}
\lambda_1^{\prime,+}(F) &\df \inf\{\lambda\in \mathbb{R}\;:\; \exists\, \psi\in 
\Sobl^{2,N}(\RN)\cap\text{L}^\infty(\RN),\psi>0, F(D^2\psi,D\psi,\psi,x) + \lambda\psi\geq 0\; \text{in }\, \RN\},
\\
\lambda_1^{\prime,-}(F)&\df\inf\{\lambda\in \mathbb{R}\;:\;\exists\, \psi\in
 \Sobl^{2,N}(\RN)\cap\text{L}^\infty(\RN),\psi<0, F(D^2\psi,D\psi,\psi,x) + \lambda\psi\leq 0\; \text{in }\, \RN\},
\end{align*}
and
\begin{align*}
\lambda_1^{\prime\prime,+}(F)&\df \sup\{\lambda\in \mathbb{R}\;:\; \exists\, \psi\in
 \Sobl^{2,N}(\RN),\;\inf_{\RN}\psi>0,\; F(D^2\psi,D\psi,\psi,x) + \lambda\psi\leq 0\; \text{in }\, \RN\}\,,
\\
\lambda_1^{\prime\prime,-}(F)&\df\sup\{\lambda\in \mathbb{R}\;:\;\exists\, \psi\in
 \Sobl^{2,N}(\RN),\;\sup_{\RN}\psi<0,\; F(D^2\psi,D\psi,\psi,x) + \lambda\psi\geq 0\; \text{in }\, \RN\}\,.
\end{align*}
We remark that in case of bounded domains one has 
$$\lambda_1(F, \Omega)=\plamplus(F, \Omega)=\pplamplus(F, \Omega)$$ and
$$\lambda_1(F, \Omega)=\plamminus(F, \Omega)=\pplamminus(F, \Omega),$$ provided we required the subsolution
(supersolution) to vanish on $\partial\Omega$ in the definition of $\plamplus$ ($\plamminus$, resp.)
(cf. \cite{QS08}). But the same might fail to hold in unbounded domains (counter-example in \cite[p.~201]{BR06}).
However, we could prove the following relation which generalizes \cite[Theorem~1.7]{BR15}.
\begin{theorem}\label{T2.2}
The following hold.
\begin{itemize}
\item[(i)] We have $\plamplus(F)\leq\lambda_1^+(F)$ and $\plamminus(F)\leq\lambda_1^-(F)$.
\item[(ii)] Suppose that 
\begin{align}\label{ET2.2A}
	\sup_{\RN}\, \delta(x)<\infty, \quad \limsup_{|x|\rightarrow\infty}\ \frac{\gamma(x)}{|x|}<\infty,
	\quad \text{and}\quad \limsup_{|x|\rightarrow\infty}\ \frac{\Lambda(x)}{\abs{x}^2}\,<\,\infty\,.
	\end{align}
Then we have $\pplamplus(F)\leq \plamplus(F)$ and $\pplamminus(F)\leq \plamminus(F)$.
\end{itemize}
\end{theorem}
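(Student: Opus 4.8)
The plan is to prove the ``$+$'' inequalities in detail; those for $\lambda_1^{\prime,-}(F)$ and $\lambda_1^{\prime\prime,-}(F)$ then follow by the same reasoning with the roles of sub- and supersolutions (and of $\cM^{+}_{\lambda,\Lambda}$ and $\cM^{-}_{\lambda,\Lambda}$) interchanged. Write $F[u]$ for $F(D^2u,Du,u,x)$, and recall the standard fact (underlying \cref{T2.1}) that the Dirichlet half-eigenvalues $\lambda_1^{+}(F,B_R)$ decrease to $\lambda_1^{+}(F)$ as $R\to\infty$. For \textbf{part (i)} I would show that $F+\lambda$ admits a bounded positive subsolution in $\RN$ for \emph{every} $\lambda>\lambda_1^{+}(F)$; this places every such $\lambda$ in the defining set of $\lambda_1^{\prime,+}(F)$, so $\lambda_1^{\prime,+}(F)\le\lambda$, and letting $\lambda\downarrow\lambda_1^{+}(F)$ finishes. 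Fix $\lambda>\lambda_1^{+}(F)$ and $R_0$ with $\lambda_1^{+}(F,B_{R_0})<\lambda$; for $R\ge R_0$ let $\phi_R>0$ be the Dirichlet principal eigenfunction on $B_R$ (so $F[\phi_R]+\lambda_1^{+}(F,B_R)\phi_R=0$ in $B_R$, $\phi_R=0$ on $\partial B_R$), normalized by $\max\phi_R=1$, and let $\bar\phi_R$ be its extension by $0$ to $\RN$. Then $\bar\phi_R$ is a bounded, nonnegative $L^N$-viscosity subsolution of $F+\lambda$ on all of $\RN$: inside $B_R$ one has $F[\phi_R]+\lambda\phi_R=(\lambda-\lambda_1^{+}(F,B_R))\phi_R>0$; off $\overline{B_R}$ it vanishes; and at $\partial B_R$ the extension has a convex corner — its inner normal derivative is strictly negative (Hopf) while the outer one is $0$ — so no paraboloid touches $\bar\phi_R$ from above there and the subsolution inequality is vacuous. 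Choosing $R_k\uparrow\infty$ with $R_k\ge R_0$ and setting $\psi\df\sup_{k\ge1}2^{-k}\bar\phi_{R_k}$ produces a function bounded by $\tfrac12$ which, on every compact set, is bounded below by a positive constant and equals a finite maximum of the $2^{-k}\bar\phi_{R_k}$; hence $\psi$ is continuous, strictly positive on $\RN$, and an $L^N$-viscosity subsolution of $F+\lambda$ there. Since $\lambda_1^{\prime,+}(F)$ is unchanged when $L^N$-viscosity subsolutions are admitted (or after a routine mollification of $\psi$ near the spheres $\partial B_{R_k}$), this gives $\lambda_1^{\prime,+}(F)\le\lambda$.

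\textbf{Part (ii).} Assume \eqref{ET2.2A} and suppose, for contradiction, that $\lambda_1^{\prime\prime,+}(F)>\lambda_1^{\prime,+}(F)$; fix $\lambda_1^{\prime,+}(F)<\lambda'<\lambda''<\lambda_1^{\prime\prime,+}(F)$. Then there are $\psi_1\in\Sobl^{2,N}(\RN)$ with $c_1\df\inf_{\RN}\psi_1>0$ and $F[\psi_1]+\lambda''\psi_1\le0$, and $\psi_2\in\Sobl^{2,N}(\RN)\cap\Lp^\infty(\RN)$ with $0<\psi_2\le C_2<\infty$ and $F[\psi_2]+\lambda'\psi_2\ge0$. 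Since $\lambda'<\lambda''$ and $\psi_1\ge c_1$, $\psi_1$ is a strict supersolution of $F+\lambda'$, namely $F[\psi_1]+\lambda'\psi_1\le-(\lambda''-\lambda')c_1<0$. Put $t^{*}\df\sup_{\RN}(\psi_2/\psi_1)$; boundedness of $\psi_2$ and $\psi_1\ge c_1$ give $0<t^{*}\le C_2/c_1$ and $t^{*}\psi_1\ge\psi_2$ on $\RN$, and along a sequence $x_n$ maximizing $\psi_2/\psi_1$ the values $\psi_1(x_n)$ stay bounded (as $t^{*}\psi_1(x_n)\le\psi_2(x_n)+o(1)\le C_2+o(1)$), so $w\df t^{*}\psi_1-\psi_2\ge0$ satisfies $\inf_{\RN}w=0$. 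Combining the strict supersolution inequality for $t^{*}\psi_1$, the subsolution inequality for $\psi_2$, and (H3), $w$ is a nonnegative strong supersolution of $\cM^{-}_{\lambda,\Lambda}(x,D^2w)-\gamma(x)|Dw|-C_{*}w\le-\varepsilon_0$ in $\RN$, where $\varepsilon_0\df t^{*}(\lambda''-\lambda')c_1>0$ and $C_{*}\df\sup_{\RN}\delta+|\lambda'|<\infty$ (finite by \eqref{ET2.2A}). If $w$ vanishes at an interior point, the strong maximum principle / ABP inequality for Pucci extremal operators forces $w$ to vanish identically near that point, contradicting the strict negativity of the right-hand side.

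If instead $w$ has no interior zero, I would push the contact point inside with a barrier: by \eqref{ET2.2A} the function $h(x)\df\log(e+|x|^2)$ satisfies $\cM^{+}_{\lambda,\Lambda}(x,D^2h)+\gamma(x)|Dh|\le C_1$ on all of $\RN$, so $w_\eta\df t^{*}\psi_1+\eta h-\psi_2\ (\ge 0)$ tends to $+\infty$ at infinity and hence attains its infimum at some $x_\eta\in\RN$. By (H3), $t^{*}\psi_1+\eta h$ is a strict supersolution of $F+\lambda'$ near $x_\eta$ with strictness constant $\varepsilon_0-\eta\bigl(C_1+C_{*}h(x)\bigr)$, and $w_\eta(x_\eta)\le w(p)+\eta h(p)$ for any fixed $p$. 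Choosing first $p$ with $w(p)$ small (possible since $\inf w=0$) and then $\eta$ small — using $\eta h(x_\eta)\le w(p)+\eta h(p)$ to bound the contribution $\eta C_{*}h(x_\eta)$ — one simultaneously keeps this constant $\ge\varepsilon_0/2$ in a neighbourhood of $x_\eta$ and makes the minimum value $w_\eta(x_\eta)<\varepsilon_0/(2C_{*})$; applying the interior argument of the previous paragraph to $w_\eta$ at $x_\eta$ then yields $w_\eta(x_\eta)\ge\varepsilon_0/(2C_{*})$, a contradiction. Hence $\lambda_1^{\prime\prime,+}(F)\le\lambda_1^{\prime,+}(F)$, and the analogue for $\lambda_1^{\prime\prime,-}(F)$ follows by the symmetric argument.

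\textbf{Main obstacle.} The delicate step is this last one, the no-interior-contact case of (ii): everything rests on the barrier $h=\log(e+|x|^2)$ being compatible with $F$. The conditions $\Lambda(x)=O(|x|^2)$ and $\gamma(x)=O(|x|)$ are precisely what keeps $\cM^{+}_{\lambda,\Lambda}(x,D^2h)+\gamma|Dh|$ bounded, while the boundedness of $\delta$, together with the freedom to recentre the minimum of $w_\eta$ near points where $w$ is already negligible, keeps the slowly growing term $\delta h$ harmless; making the comparison at $x_\eta$ rigorous through the ABP inequality, while tracking how the constants depend on the coefficients near the a priori uncontrolled point $x_\eta$, is where the three hypotheses in \eqref{ET2.2A} are genuinely spent. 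By contrast, the ingredients of (i) — the monotone convergence $\lambda_1^{+}(F,B_R)\downarrow\lambda_1^{+}(F)$ and the $\Sobl^{2,N}$-versus-$L^N$-viscosity bookkeeping for the cut-off eigenfunctions — are comparatively routine.
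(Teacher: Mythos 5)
The gap is in your last step. The function $\psi=\sup_{k}2^{-k}\bar\phi_{R_k}$ is, on each compact set, a maximum of finitely many $W^{2,p}$ functions, and such a maximum is in general only Lipschitz: across the switching hypersurfaces the distributional Hessian picks up a nonnegative singular (surface--measure) part, so $\psi\notin\Sobl^{2,N}(\RN)$. The paper's definition of $\plamplus(F)$ requires an $L^N$-strong subsolution in $\Sobl^{2,N}(\RN)\cap L^\infty(\RN)$; the paper's Remark~2.1 only covers the classes $\Psi^{\pm}$ defining $\lambda_1^{\pm}$, not the primed quantities. Your bridging claim — that $\plamplus$ is unchanged if $L^N$-viscosity subsolutions are admitted, ``or after a routine mollification'' — is precisely the missing point and is not routine: passing from a viscosity to a strong subsolution is the nontrivial direction, and mollification or smooth-max (softmax) regularization preserves subsolutions only for linear or concave operators, whereas here $F$ is \emph{convex}, so convexity gives the wrong inequality for averaged data; a comparison-based replacement (solving $F+\lambda=0$ in $\sB_n$ with boundary data $\psi$) is also unavailable because $\lambda>\lambda_1^{+}(F,\sB_n)$ for large $n$. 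The paper avoids exactly this obstruction by a different construction (\cref{T3.2}): after normalizing the Dirichlet eigenfunction $\psi^{+}_k$, it solves the semilinear problems $F(D^2u,Du,u,x)=(\lambda+c^+(x))u^2-\lambda u$ on increasing balls between the subsolution $\psi_k^{+}$ and the supersolution $1$ (monotone iteration, \cref{L3.3}), and passes to the limit using interior $W^{2,p}$ estimates and Harnack; the limit is a genuine bounded, everywhere positive, $\Sobl^{2,p}$ subsolution of $F+\lambda$, hence admissible. As written, your part (i) either needs such a construction or an actual proof of the viscosity/strong equivalence for $\plamplus$.

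\textbf{Part (ii).} This part is correct and takes a genuinely different route from the paper. The paper first proves the weighted maximum principle (\cref{T2.4}, via \cref{T3.3}, using the barrier $\chi=|x|^{\sigma}$ or $e^{\sigma|x|}$ and the ratio $\sup u/\psi_n$), and then deduces (ii) in \cref{T3.5} by applying $+$MP to the bounded positive subsolution furnished by $\plamplus(F)<\lambda<\pplamplus(F)$. You compare the two test functions directly: the additive barrier $\eta\log(e+|x|^2)$, whose $\cM^{+}_{\lambda,\Lambda}$-image plus $\gamma|Dh|$ is globally bounded exactly under \eqref{ET2.2A}, forces the infimum of $t^{*}\psi_1+\eta h-\psi_2$ to be attained; the recentering trick (first $w(p)$ small, then $\eta$ small) tames the $\delta\cdot\eta h$ term; and the strong maximum principle of \cite[Lemma~3.1]{QS08} applied to $w_\eta-w_\eta(x_\eta)$ on a fixed ball around $x_\eta$ gives the contradiction — only local bounds on the coefficients enter there, so the uncontrolled location of $x_\eta$ is indeed harmless. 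This yields a self-contained proof of (ii) that bypasses the general $\beta^{+}$-MP, at the price of not producing \cref{T2.4} itself, which the paper needs elsewhere; the ``minus'' case by symmetry is fine.
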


In view of \cref{T2.2} we see that $\pplamplus(F)\leq\plamplus(F)\leq\lambda_1^+(F)$ and
$\pplamminus(F)\leq\plamminus(F)\leq\lambda_1^-(F)$, provided \eqref{ET2.2A} holds. Again, due to
the convexity of $F$ we have $\lambda_1^+(F)\leq \lambda_1^-(F)$. One might wonder if there is any
natural relation between ``plus" and ``minus" eigenvalues. We now argue that this might not be possible,
in general.
If we consider $F$ to be linear then
we have $\lambda^{\cdot, +}_1(F)=\lambda^{\cdot, -}_1(F)$, and therefore if \eqref{ET2.2A} holds, then 
$\lambda^{+}_1(F)\geq\pplamminus(F)$, by \cref{T2.2}. We now produce an example where the reverse 
inequality holds.
\begin{example}
Consider two linear elliptic operators of the form 
	$$L_\alpha u\,=\,\Delta u + b_\alpha(x)\cdot D u +c_\alpha(x) u,$$
	for $\alpha\in\{1,2\}$ with the properties that  
$$\lambda_{1}^{\prime\prime}(L_2,\RN)\,>\,\lambda_{1}^{\prime\prime}(L_1,\RN)\quad
 \text{and}\quad \lambda_{1}^{\prime\prime}(L_1,\RN)=\lambda_{1}^{\prime}(L_1,\RN)=\lambda_{1}(L,\RN).$$
Now define a nonlinear operator 
$$F(D^2u,Du,u,x)\df\Delta u+\max_{\alpha\in\{1,2\}}\{b_\alpha(x)\cdot Du\}+
c_\alpha(x) u\}.$$
It is then easily seen that	
$$\lambda_{1}^{''-}(F) \,\geq\, \max\{\lambda_{1}^{\prime\prime}(L_1,\RN),
\lambda_{1}^{\prime\prime}(L_2,\RN)\}\,,$$
 and 
$$\lambda_{1}^{+}(F) \,\leq\, \min\{\lambda_{1}(L_1,\RN),\lambda_{1}(L_2,\RN)\}.$$
Combining we obtain	
$$\pplamminus(F) \geq \lambda_{1}^{\prime\prime}(L_2)>\lambda_{1}^{\prime\prime}(L_1,\RN)
=\lambda_{1}(L_1,\RN)\geq \lambda_{1}^{+}(F).$$
\end{example}
Next we list a few class of operators for which these three eigenvalues coincide (compare
them with \cite[Theorem~1.9]{BR15}). We only provide the result for ``plus" eigenvalues and the 
analogous result for ``minus" eigenvalues are easy to guess.
\begin{theorem}\label{T2.3}
 The equality $\lambda_{1}^{+}(F)=\lambda_{1}^{\prime\prime,+}(F)$ holds in each of the following cases:
 \begin{itemize}
\item[(i)] $F=\tilde{F}+\tilde\gamma(x),$ where $\tilde{F}$ is a nonlinear operator with an additional property $\lambda_{1}^{+}(\tilde{F},\RN)=\lambda_{1}^{\prime\prime,+}(\tilde{F},\RN)$, and $\tilde\gamma\in L^\infty(\RN)$ is a non-negative function satisfying
$\lim_{|x|\rightarrow\infty}\tilde\gamma(x)=0$.
\item[(ii)] $\lambda_{1}^{+}(F)\,\leq\,-\limsup_{|x|\rightarrow\infty}\ F(0,0,1,x)$.
\item[(iii)] Assume that $\lambda_0\leq\lambda(x)\leq\Lambda(x)\leq\Lambda_0$ for all
$x\in\RN$, $\lim_{\abs{x}\to\infty}\gamma(x)=0$ and for all $r>0$ and all $\beta$ such that 
$\beta<\,\limsup_{\abs{x}\to\infty} F(0,0,1,x),$ there exists $\sB_r(x_0)$ satisfying $\inf_{\sB_r(x_0)} F(0,0,1,x)>\beta\,.$
\item[(iv)] There exists a $V\in\cC^2(\RN)$ with $\inf_{\RN} V>0$ and 
$$F(D^2V, DV, V, x)\leq -\lambda_1^+(F) V\quad \text{for all}\; x\in\sB^c,$$
for some ball $\sB$.
\end{itemize}
\end{theorem}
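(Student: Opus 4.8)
The plan is to settle the inequality $\lambda_1^+(F)\ge\pplamplus(F)$ once and for all (it is immediate, since every competitor in the definition of $\pplamplus(F)$ is one for $\lambda_1^+(F)$), and then, in each of the four cases, to obtain the reverse inequality $\lambda_1^+(F)\le\pplamplus(F)$ through one common mechanism. The mechanism is the following reduction: \emph{it suffices to produce, for every $\delta>0$, a function $V_{\delta}\in\Sobl^{2,N}(\RN)$ with $\inf_{\RN}V_{\delta}>0$ and a ball $\sB_{\delta}$ such that $x\mapsto F(D^{2}V_{\delta},DV_{\delta},V_{\delta},x)$ is bounded above on $\overline{\sB_{\delta}}$ and}
\[
F(D^{2}V_{\delta},DV_{\delta},V_{\delta},x)+\bigl(\lambda_1^+(F)-\delta\bigr)V_{\delta}\ \le\ 0\qquad\text{a.e. in }\ \sB_{\delta}^{c}.
\]
Indeed, granting such $V_{\delta}$, fix $\delta>0$; by \cref{T2.1} there is a positive eigenfunction $\varphi\in\Sobl^{2,p}(\RN)$, $p>N$, of $F$ at $\lambda_1^+(F)$. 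Put $\psi\df\varphi+\eta V_{\delta}$ with $\eta>0$ small. Since $F$ is positively $1$-homogeneous and convex by (H1)--(H2), it is subadditive in $(M,p,u)$, so a.e.\ in $\RN$
\begin{align*}
F(D^{2}\psi,D\psi,\psi,x)
&\ \le\ F(D^{2}\varphi,D\varphi,\varphi,x)+\eta\,F(D^{2}V_{\delta},DV_{\delta},V_{\delta},x)\\
&\ =\ -\lambda_1^+(F)\,\varphi+\eta\,F(D^{2}V_{\delta},DV_{\delta},V_{\delta},x).
\end{align*}
With $\lambda\df\lambda_1^+(F)-\delta$, this yields $F(D^{2}\psi,D\psi,\psi,x)+\lambda\psi\le-\delta\psi\le0$ in $\sB_{\delta}^{c}$ for every $\eta>0$, and $F(D^{2}\psi,D\psi,\psi,x)+\lambda\psi\le-\delta\varphi+\eta\,C_{\delta}$ on $\overline{\sB_{\delta}}$ for a finite $C_{\delta}$ independent of $\eta$; since $\varphi>0$ is continuous, $\inf_{\overline{\sB_{\delta}}}\varphi>0$, so the latter is $\le0$ once $\eta$ is small. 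As $\inf_{\RN}\psi\ge\eta\inf_{\RN}V_{\delta}>0$, the function $\psi$ witnesses $\pplamplus(F)\ge\lambda_1^+(F)-\delta$; letting $\delta\downarrow0$ finishes the argument modulo the construction of $V_{\delta}$.

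There remains only the construction of $V_{\delta}$. In case (iv) take $V_{\delta}\df V$: the displayed inequality holds even with $\delta=0$, and $F(D^{2}V,DV,V,\cdot)$ is bounded on $\overline{\sB}$ because $V\in\cC^{2}$ and $\lambda,\Lambda,\gamma,\delta$ are locally bounded (use (H3)). In case (ii) take $V_{\delta}\equiv1$, so $F(D^{2}V_{\delta},DV_{\delta},V_{\delta},x)=F(0,0,1,x)$; the hypothesis $\lambda_1^+(F)\le-\limsup_{\abs{x}\to\infty}F(0,0,1,x)$ gives $F(0,0,1,x)\le-\lambda_1^+(F)+\delta$ for $\abs{x}$ large (choose $\sB_{\delta}$ accordingly), and boundedness on $\overline{\sB_{\delta}}$ is (H4). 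In case (i) write $F(M,p,u,x)=\tilde F(M,p,u,x)+\tilde\gamma(x)u$; as $\tilde\gamma\ge0$ one has $\Psi^{+}(F,\RN,\lambda)\subseteq\Psi^{+}(\tilde F,\RN,\lambda)$, hence $\lambda_1^+(F)\le\lambda_1^+(\tilde F,\RN)=\pplamplus(\tilde F,\RN)$, so for each $\delta>0$ there is $\psi_{\delta}\in\Sobl^{2,N}(\RN)$ with $\inf_{\RN}\psi_{\delta}>0$ and $\tilde F(D^{2}\psi_{\delta},D\psi_{\delta},\psi_{\delta},x)+(\lambda_1^+(F)-\tfrac{\delta}{2})\psi_{\delta}\le0$ in $\RN$; setting $V_{\delta}\df\psi_{\delta}$ gives
\[
F(D^{2}V_{\delta},DV_{\delta},V_{\delta},x)+\bigl(\lambda_1^+(F)-\delta\bigr)V_{\delta}\ \le\ \bigl(\tilde\gamma(x)-\tfrac{\delta}{2}\bigr)\psi_{\delta}\ \le\ 0
\]
once $\tilde\gamma(x)\le\tfrac{\delta}{2}$, i.e.\ for $\abs{x}$ large, while boundedness above on $\overline{\sB_{\delta}}$ follows from the same inequality and the continuity of $\psi_{\delta}$.

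Case (iii) is reduced to case (ii) by proving $\lambda_1^+(F)\le-L$, where $L\df\limsup_{\abs{x}\to\infty}F(0,0,1,x)$. From (H3), the bounds $\lambda_{0}\le\lambda\le\Lambda\le\Lambda_{0}$ and (H1) one gets $F(M,p,u,x)\ge\cM^{-}_{\lambda_{0},\Lambda_{0}}(M)-\gamma(x)\abs{p}+F(0,0,1,x)\,u$ for $u\ge0$, so the eigenfunction $\varphi$ of $F$ at $\lambda_1^+(F)$ is, on any ball $\sB_{r}(x_{0})$, a positive supersolution at $\lambda_1^+(F)$ of $w\mapsto\cM^{-}_{\lambda_{0},\Lambda_{0}}(D^{2}w)-\gamma(x)\abs{Dw}+\bigl(\inf_{\sB_{r}(x_{0})}F(0,0,1,\cdot)\bigr)w$. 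The Dirichlet eigenvalue theory on bounded domains (\cite{QS08}), translation invariance, and a constant shift of the zeroth-order coefficient then give
\[
\lambda_1^+(F)\ \le\ \lambda_{1}\!\bigl(\cM^{-}_{\lambda_{0},\Lambda_{0}}(D^{2}\cdot)-\gamma\abs{D\cdot},\,\sB_{r}(x_{0})\bigr)\ -\ \inf_{\sB_{r}(x_{0})}F(0,0,1,\cdot).
\]
By the hypothesis of (iii) one may select, for each fixed $r$, balls $\sB_{r}(x_{0})$ with $\abs{x_{0}}\to\infty$ along which $\inf_{\sB_{r}(x_{0})}F(0,0,1,\cdot)\to L$; since $\gamma\to0$ at infinity, the first term on the right then converges (by monotonicity in the drift coefficient) to $\lambda_{1}(\cM^{-}_{\lambda_{0},\Lambda_{0}},\sB_{r})$, which by scaling is $O(r^{-2})$. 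Letting $\abs{x_{0}}\to\infty$ and then $r\to\infty$ yields $\lambda_1^+(F)\le-L$.

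The step I expect to be the main obstacle is the extraction of the balls in case (iii): the hypothesis only grants, for each radius, \emph{some} ball on which $F(0,0,1,\cdot)>\beta$ (for $\beta<L$), and one must argue that their centres can always be taken to escape to infinity. I would handle this by a dichotomy: either such balls with $\abs{x_{0}}\to\infty$ are available outright, or, for some radius, all of them have bounded centres, in which case a compactness argument on $\overline{\sB_{r}(x_{0})}$ forces $F(0,0,1,\cdot)\ge L$ on balls of every radius and hence $F(0,0,1,x)\to L$, making the extraction trivial. The remaining ingredients — subadditivity of $F$, positivity and local boundedness of the eigenfunction supplied by \cref{T2.1}, and the monotonicity, continuity, and $r^{-2}$-scaling of $\lambda_{1}(\cdot,\sB_{r})$ from \cite{QS08} — are routine.
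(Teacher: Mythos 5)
Your argument is correct in substance, but it is organized quite differently from the paper. The paper's route is to first establish the characterization $\pplamplus(F)=\min\{\lambda_1^+(F),\lim_{r\to\infty}\pplamplus(F,\bar\sB_r^c)\}$ (\cref{T3.6}, whose proof needs the construction of exterior supersolutions via an auxiliary semilinear Dirichlet problem and boundary estimates), and then to deduce (i) and (ii) from it together with \cref{L3.4}, to reduce (iii) to (ii) by building an explicit compactly supported bump subsolution $\exp(-1/(1-|\varepsilon(x-x_0)|^2))$ in a far-away ball, and to note that (iv) also follows from \cref{T3.6} (the paper's ``direct proof'' of (iv) is exactly your perturbation $\varphi+\eta V$). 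You instead promote that perturbation argument to the common mechanism for all four cases: an exterior supersolution $V_\delta$ with positive infimum, glued to the global eigenfunction from \cref{T2.1} by convexity/subadditivity, with $\eta$ small to absorb the compact part. This bypasses \cref{T3.6} entirely, which makes the proof of \cref{T2.3} more self-contained (no exterior Dirichlet problem machinery), at the cost of losing the sharper information \cref{T3.6} provides and which the paper reuses elsewhere (e.g.\ in \cref{P3.1}). Your treatment of (iii) is also genuinely different: instead of the explicit bump, you restrict the global eigenfunction to distant balls, compare with the constant-coefficient operator $\cM^-_{\lambda_0,\Lambda_0}(D^2\cdot)-\gamma_0|D\cdot|$ via (H3) and homogeneity, and use monotonicity in the drift plus the $r^{-2}$ scaling of the Pucci eigenvalue; those ingredients are indeed available (finiteness follows, e.g., by comparing supersolutions with a fixed linear operator $\lambda_0\Delta-\gamma_0 e\cdot D$), so this reduction of (iii) to (ii) is sound. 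Minor slips: in the mechanism the bound should read $\le-\delta\varphi\le0$ rather than $-\delta\psi$, and local boundedness of $F(0,0,1,\cdot)$ comes from (H3), not (H4); neither affects the argument.

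The one genuinely shaky step is the ``dichotomy'' you propose for extracting balls with centres escaping to infinity in case (iii): in the second branch, the conclusion that $F(0,0,1,x)\to L$ does not follow (knowing $c\ge L$ on some ball of every radius says nothing about $c$ off those balls). Fortunately no dichotomy is needed: given $r$, $R$ and $\beta<L$, apply the hypothesis of (iii) with radius $\rho=R+r+1$ to get $\sB_\rho(y_0)$ with $\inf_{\sB_\rho(y_0)}c>\beta$, and take the sub-ball $\sB_r(x_0)\subset\sB_\rho(y_0)$ with $x_0=y_0+(R+1)\tfrac{y_0}{|y_0|}$ (or $x_0=(R+1)e_1$ if $y_0=0$), so that $|x_0|>R$ and still $\inf_{\sB_r(x_0)}c>\beta$. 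With this replacement your case (iii), and hence the whole proposal, goes through.
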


Now we turn our attention towards maximum principles. It was observed in the seminal work of Berestycki,
Nirenberg and Varadhan \cite{BNV} that the sign of the principle eigenvalue determines the
validity of maximum principles in bounded domains. Extension of this result for nonlinear operators are obtained by 
Quaas and Sirakov \cite{QS08} and Armstrong \cite{Arm09}.
Further generalization in smooth bounded domains 
for a class of degenerate, nonlinear elliptic operators are obtained by Berestycki et.\ al.\ \cite{BCPR},
Birindelli and Demengel \cite{BD06}.
Recently,
Berestycki and Rossi \cite{BR15} establish the maximum principles in unbounded domains for linear elliptic 
operators. Here we extend their results to our nonlinear setting. 
\begin{definition}[Maximum principles]
We say that the operator $F$ satisfies $\beta^{+}$-MP with respect to a positive function $\beta$
if for any function 
$u\in\Sobl^{2, N}(\RN)$ satisfying
$$F(D^2u,Du,u,x)\;\geq\; 0 \quad \text{in }\; \RN,\quad \text{and} \quad 
\sup_{\RN}\ \frac{u}{\beta}\,<\,\infty\,,$$
we have $u\leq 0$ in $\RN$. For $\beta=1$, we simply mention this property as $+$MP.

We say that the operator $F$ satisfies $\beta^{-}$-MP with respect to a negative function $\beta$ if for any function 
$u\in\Sobl^{2, N}(\RN)$ satisfying
$$F(D^2u,Du,u,x)\;\leq\; 0 \quad \text{in }\; \RN,\quad \text{and} \quad 
\sup_{\RN}\ \frac{u}{\beta}\,<\,\infty\,,$$
we have $u\geq 0$ in $\RN$. For $\beta=-1$, we simply mention this property as $-$MP.
\end{definition}
Note that $\beta\equiv 1$ corresponds to the well known maximum principle.
We would be interested in a function $\beta:\RN\rightarrow (0, \infty)$  which satisfies either
\begin{align}\label{E2.3}
	\exists\, \sigma>0,\quad \limsup_{|x|\rightarrow\infty}\ \beta(x)|x|^{-\sigma}=0\,,
	\end{align}
	or
	\begin{align}\label{E2.4}
	\exists\;\sigma>0,\quad \limsup_{|x|\rightarrow\infty}\ \beta(x)\exp(-\sigma|x|)=0\,.
\end{align}
Generalizing \cite[Definition~1.2]{BR15} we now consider the following quantities.
\begin{definition} Given a positive function $\beta:\RN\to \RR$, we define
\begin{align*}
\lambda_{\beta}^{\prime\prime,+}(F)&\df\sup \{\lambda\in \mathbb{R}\;:\;\exists\; \psi\in 
\Sobl^{2,N}(\RN),\ \psi\geq\beta, \ F(D^2\psi,D\psi,\psi,x)+\lambda\psi\leq 0\; \text{in }\, \RN\}\,,
\\
\lambda_{\beta}^{\prime\prime,-}(F) &\df\sup \{\lambda\in \mathbb{R}\; :\;\exists\, \psi
\in\Sobl^{2,N}(\RN),\ \psi\leq-\beta, \ F(D^2\psi,D\psi,\psi,x)+\lambda\psi\geq 0\; \text{in }\, \RN\}\,.
\end{align*}
\end{definition}
Our maximum principles would be established under the following growth conditions on
the coefficients.
	\begin{align}\label{E2.5}
	\sup_{\RN}\ \delta(x)<\infty, \quad \limsup_{|x|\rightarrow\infty}\ \frac{\gamma(x)}{|x|}<\infty,\quad \text{and}\quad \limsup_{|x|\rightarrow\infty}\ \frac{\Lambda(x)}{|x|^2}\,<\,\infty\,.
	\end{align}
	 
	\begin{align}\label{E2.6}
	\sup_{\RN}\ \delta(x)<\infty, \quad \sup_{\RN}\ \gamma(x)<\infty,\quad \text{and}
	 \quad \sup{\RN}\ \Lambda(x)<\infty\,.
	\end{align}
Next we state our maximum principle

\begin{theorem}\label{T2.4}
Suppose that either \eqref{E2.3} and \eqref{E2.5} or \eqref{E2.4} and \eqref{E2.6} hold. Then the following hold:
\begin{itemize}
\item[(i)]The operator $F$ satisfies $\beta^+$-MP in $\RN$ if $\lambda_{\beta}^{\prime\prime,+}(F)>0$.
\item[(ii)] The operator $F$ satisfies $(-\beta)^-$-MP in $\RN$ if $\lambda_{\beta}^{\prime\prime,-}(F)>0$.
\end{itemize}
\end{theorem}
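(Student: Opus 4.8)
The plan is to transplant the ``positive supersolution $\Rightarrow$ maximum principle'' mechanism of \cite{BNV,QS08,BR15} into the present Pucci--convex framework. I treat (i); part (ii) is its mirror image, run with $\cM^+_{\lambda,\Lambda}$ and $L^N$-strong subsolutions in place of $\cM^-_{\lambda,\Lambda}$ and supersolutions. Since $\lambda_{\beta}^{\prime\prime,+}(F)>0$, first fix $\mu>0$ and $\psi\in\Sobl^{2,N}(\RN)$ with $\psi\geq\beta$ and $F(D^2\psi,D\psi,\psi,x)+\mu\psi\leq 0$ in $\RN$. Let $u\in\Sobl^{2,N}(\RN)$ satisfy $F(D^2u,Du,u,x)\geq 0$ with $\sup_{\RN}u/\beta<\infty$; then, since $\psi\geq\beta$, $\Theta\df\sup_{\RN}u/\psi<\infty$, and the goal is $\Theta\leq 0$. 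Assume for contradiction that $\Theta>0$ and set $w\df\Theta\psi-u\geq 0$.

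The first step collapses the two one-sided inequalities into a single Pucci inequality for $w$. By positive $1$-homogeneity, $F(\Theta D^2\psi,\Theta D\psi,\Theta\psi,x)=\Theta F(D^2\psi,D\psi,\psi,x)\leq-\Theta\mu\psi$; combining this, the upper bound in (H3) applied to $(D^2u,Du,u)$ and $(\Theta D^2\psi,\Theta D\psi,\Theta\psi)$, and $F(D^2u,Du,u,x)\geq 0$ gives, in the $L^N$-strong sense,
\begin{align*}
\cM^-_{\lambda,\Lambda}(x,D^2w)-\gamma(x)|Dw|-\delta(x)w\;\leq\;-\Theta\mu\psi\;\leq\;-\Theta\mu\beta\;<\;0\qquad\text{in }\RN.
\end{align*}
Since the operator on the left is proper, the strong maximum principle for $L^N$-strong supersolutions forces $w\equiv 0$ or $w>0$ in $\RN$; the first is excluded because $w\equiv 0$ would give $u=\Theta\psi$ and hence $0\leq F(D^2u,Du,u,x)=\Theta F(D^2\psi,D\psi,\psi,x)\leq-\Theta\mu\psi<0$. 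Thus $w>0$ throughout $\RN$, while $\inf_{\RN}w/\psi=\Theta-\sup_{\RN}u/\psi=0$; as $w,\psi$ are positive and continuous, this infimum is approached only along a sequence $x_n$ with $|x_n|\to\infty$.

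The core of the argument is to rule out this escape to infinity, by showing that $\Theta$ is actually attained on a compact set. I would pass to $z\df u/\psi\in\Sobl^{2,N}(\RN)$ (the point of dividing by $\psi$ is that the governing operator becomes convex, so the sliding/barrier argument below applies): using $1$-homogeneity, (H3), and the interior Lipschitz/gradient estimate for the homogeneous equation $F(D^2\psi,D\psi,\psi,x)+\mu\psi\leq 0$ (available because $F$ is convex, and giving a local bound on $|D\psi|/\psi$), one checks that on the open set $\{z>0\}$ the function $z$ is an $L^N$-strong subsolution of
\begin{align*}
\cM^+_{\lambda,\Lambda}(x,D^2z)+\widetilde\gamma(x)|Dz|-\mu z\;\geq\;0,\qquad\widetilde\gamma\df\gamma+2\,\Lambda\,|D\psi|/\psi .
\end{align*}
One then builds a radial barrier $\chi>0$ with $\chi(x)\to\infty$ as $|x|\to\infty$ and, for some large $R_1$,
\begin{align*}
\cM^+_{\lambda,\Lambda}(x,D^2\chi)+\widetilde\gamma(x)|D\chi|-\mu\chi\;\leq\;0\qquad\text{for }|x|\geq R_1 :
\end{align*}
under \eqref{E2.3} and \eqref{E2.5} one behaving like $(1+|x|^2)^{\tau}$, and under \eqref{E2.4} and \eqref{E2.6} one behaving like $\exp(\tau\sqrt{1+|x|^2})$, with $\tau>0$ chosen small enough --- in terms of $\mu$ and of the growth bounds on $\Lambda,\gamma,\delta$ --- that the zeroth order term $-\mu\chi$ absorbs the first and second order ones outside a large ball. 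For $\eta>0$, $z-\eta\chi\to-\infty$ at infinity, so $\sup_{|x|\geq R_1}(z-\eta\chi)$ is attained; if it were positive and attained at some $x_0$ with $|x_0|>R_1$, then $x_0\in\{z>0\}$, $D(z-\eta\chi)(x_0)=0$, $D^2(z-\eta\chi)(x_0)\leq 0$, and the subsolution inequality, the monotonicity of $\cM^+_{\lambda,\Lambda}$, and the barrier inequality together would force $z(x_0)\leq\eta\chi(x_0)$, a contradiction. Hence $\sup_{|x|\geq R_1}(z-\eta\chi)\leq\max\{0,\,\sup_{|x|=R_1}z\}$, and letting $\eta\downarrow 0$ gives $\sup_{\RN}z\leq\max\{0,\,\sup_{|x|\leq R_1}z\}$. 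Since $\Theta=\sup_{\RN}z>0$, this forces $\Theta$ to be attained at a finite point $x_0$; but then $z(x_0)=\Theta>0$, $Dz(x_0)=0$, $D^2z(x_0)\leq 0$, and the subsolution inequality yields $0\leq\cM^+_{\lambda,\Lambda}(x_0,D^2z(x_0))-\mu\Theta\leq-\mu\Theta<0$, a contradiction. Therefore $\Theta\leq 0$, i.e.\ $u\leq 0$, which is $\beta^+$-MP.

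The hard part is this core step, and inside it the construction and verification of the barrier $\chi$: one has to choose $\tau$ compatibly with $\mu$ and with the growth exponents in \eqref{E2.3}--\eqref{E2.6}, and one has to keep the auxiliary drift $\widetilde\gamma$ under control, which is precisely where the interior estimates for $\psi$ and the precise matching between the admissible growth of $\beta$ and the growth bounds \eqref{E2.5}/\eqref{E2.6} on $\Lambda,\gamma,\delta$ come in. Establishing $\cM^+_{\lambda,\Lambda}(x,D^2\chi)+\widetilde\gamma(x)|D\chi|-\mu\chi\leq 0$ outside a large ball, in both the polynomial and the exponential regime, is the main technical burden; the remaining steps are standard strong-maximum-principle bookkeeping.
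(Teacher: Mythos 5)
There is a genuine gap, and it sits exactly where you flag the ``main technical burden'': the change of variables $z=u/\psi$ and the barrier. Your auxiliary drift $\widetilde\gamma=\gamma+2\Lambda|D\psi|/\psi$ is not under control, and cannot be under the stated hypotheses. The function $\psi$ furnished by $\lambda_{\beta}^{\prime\prime,+}(F)>0$ is only a $\Sobl^{2,N}$ \emph{supersolution}; interior Lipschitz/$C^{1,\alpha}$ estimates (convexity of $F$ notwithstanding) apply to solutions, not to one-sided supersolutions, so the claimed local bound on $|D\psi|/\psi$ is not available -- a $\Sobl^{2,N}$ function need not have locally bounded gradient, and nothing in the definition of $\lambda_{\beta}^{\prime\prime,+}$ gives more. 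Worse, even granting a qualitative local bound, the barrier inequality $\cM^+_{\lambda,\Lambda}(x,D^2\chi)+\widetilde\gamma(x)|D\chi|-\mu\chi\leq 0$ outside a large ball cannot be closed: under \eqref{E2.5} one already has $\cM^+_{\lambda,\Lambda}(x,D^2\chi)\sim\Lambda|x|^{\tau-2}\sim|x|^{\tau}$ and $\gamma|D\chi|\sim|x|^{\tau}$ for $\chi\sim|x|^{\tau}$, i.e.\ the same order as $\mu\chi$ with a constant that need not be smaller than $\mu$ (this is why the paper only proves $F(D^2\chi,D\chi,\chi,x)\leq C\chi$ with an uncontrolled $C$, see \eqref{ET3.3A}, rather than a true supersolution property); the only slack would come from very slowly growing barriers, and that slack is destroyed by the extra term $2\Lambda|D\psi|/\psi\,|D\chi|$, for which no decay or growth bound at infinity is available. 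A secondary (fixable) flaw: at the maximum points you evaluate $Dz(x_0)=0$, $D^2z(x_0)\leq 0$ pointwise, which is not legitimate for $\Sobl^{2,N}$ functions; one must instead argue through the strong maximum principle or ABP-type comparison on a small ball.

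The paper's proof avoids both problems by never dividing by $\psi$: it perturbs additively, $\psi_n=\psi+\tfrac1n\chi$ with the same polynomial/exponential $\chi$, sets $\kappa_n=\sup_{\RN}u/\psi_n$ (attained because $u/\beta$ is bounded and $\beta/\chi\to 0$ by \eqref{E2.3}/\eqref{E2.4}), and exploits convexity of $F$ together with the elementary telescoping bound $\tfrac1n\chi(x_n)\leq 2(\tfrac1{\kappa_n}-\tfrac1{\kappa_{2n}})\psi(x_n)$; since $\kappa_n$ converges, the factor $\tfrac1{\kappa_n}-\tfrac1{\kappa_{2n}}$ beats the uncontrolled constant $C$ in $F(\chi)\leq C\chi$, and the contradiction is then produced by the strong maximum principle applied to $w=\kappa_m\psi_m-u$ on a small ball around $x_m$. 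If you want to salvage your route, you would have to either add regularity/growth assumptions on $\psi$ that the theorem does not make, or replace the exact supersolution barrier by the paper's ``$C\chi$ plus vanishing coefficient'' device.
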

As a consequence of \cref{T2.4} we obtain the following corollaries.
\begin{corollary}
Suppose that either \eqref{E2.5} or \eqref{E2.6} holds. Then we have
\begin{itemize}
\item[(i)] The operator $F$ satisfy $+$MP in $\RN$ if $\pplamplus(F)>0$.
\item[(ii)] The operator $F$ satisfy $-$MP in $\RN$ if $\pplamminus(F)>0$.
\item[(iii)] Suppose that $\pplamplus(F)>0$ (therefore, $\pplamminus(F)>0$).
Let $u\in\Sobl^{2,N}(\RN)\cap\L^\infty(\RN)$  satisfy $F(D^2u,Du,u,x)= 0$ in $\RN$. Then $u\equiv 0$.
\end{itemize}
\end{corollary}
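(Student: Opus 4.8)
The plan is to read off all three items from \cref{T2.4} by taking the constant weight $\beta\equiv1$ (and the constant negative function $-\beta\equiv-1$ for the ``minus'' statements). The first observation is that $\beta\equiv1$ is admissible in either growth regime of \cref{T2.4}: for every $\sigma>0$ we have $\limsup_{|x|\to\infty}|x|^{-\sigma}=0$ and $\limsup_{|x|\to\infty}\exp(-\sigma|x|)=0$, so $\beta\equiv1$ satisfies both \eqref{E2.3} and \eqref{E2.4}. Hence, if \eqref{E2.5} holds then the hypothesis ``\eqref{E2.3} and \eqref{E2.5}'' of \cref{T2.4} is in force, and if \eqref{E2.6} holds then ``\eqref{E2.4} and \eqref{E2.6}'' is in force; in either case \cref{T2.4} applies with this $\beta$.

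Next I would match the weighted eigenvalues appearing in \cref{T2.4} with the ones in the corollary, namely check that $\lambda_{\beta}^{\prime\prime,+}(F)=\pplamplus(F)$ and $\lambda_{\beta}^{\prime\prime,-}(F)=\pplamminus(F)$ when $\beta\equiv1$. The two definitions differ only in that $\pplamplus$ requires $\inf_{\RN}\psi>0$ whereas $\lambda_{\beta}^{\prime\prime,+}$ with $\beta\equiv1$ requires $\psi\ge1$; but, by the positive $1$-homogeneity (H1), if $\psi\in\Sobl^{2,N}(\RN)$ satisfies $F(D^2\psi,D\psi,\psi,x)+\lambda\psi\le0$ with $m\df\inf_{\RN}\psi>0$, then $m^{-1}\psi$ satisfies the same inequality and $m^{-1}\psi\ge1$, while conversely $\psi\ge1$ forces $\inf_{\RN}\psi>0$; so the two suprema coincide. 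The ``minus'' case is identical after rescaling a supersolution with $\sup_{\RN}\psi<0$ to one with $\psi\le-1$. Since $\beta^{+}$-MP with $\beta\equiv1$ is precisely $+$MP, and $(-\beta)^{-}$-MP with $\beta\equiv1$ is precisely $-$MP, items (i) and (ii) are now exactly \cref{T2.4}(i) and \cref{T2.4}(ii).

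For (iii), I would first settle the parenthetical implication $\pplamplus(F)>0\Rightarrow\pplamminus(F)>0$. The key point is that a convex, positively $1$-homogeneous $F$ is subadditive in $(M,p,u)$ (convexity together with $F(2M,2p,2u,x)=2F(M,p,u,x)$), so that $0=F(0,0,0,x)\le F(M,p,u,x)+F(-M,-p,-u,x)$ for all arguments. Therefore, if $\psi$ realizes a value $\lambda$ in the defining family of $\pplamplus$, i.e.\ $\inf_{\RN}\psi>0$ and $F(D^2\psi,D\psi,\psi,x)+\lambda\psi\le0$, then $-\psi$ has $\sup_{\RN}(-\psi)<0$ and $F(D^2(-\psi),D(-\psi),-\psi,x)+\lambda(-\psi)\ge-\bigl(F(D^2\psi,D\psi,\psi,x)+\lambda\psi\bigr)\ge0$, so $\lambda$ lies in the defining family of $\pplamminus$; taking suprema gives $\pplamplus(F)\le\pplamminus(F)$. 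Finally, for $u\in\Sobl^{2,N}(\RN)\cap L^\infty(\RN)$ with $F(D^2u,Du,u,x)=0$: since $u$ is bounded, $\sup_{\RN}u<\infty$, so viewing $u$ as a subsolution ($F\ge0$) item (i) yields $u\le0$; likewise $\sup_{\RN}\bigl(u/(-1)\bigr)=-\inf_{\RN}u<\infty$, so viewing $u$ as a supersolution ($F\le0$) item (ii) yields $u\ge0$; hence $u\equiv0$.

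Since all the analytic substance is contained in \cref{T2.4}, I do not anticipate a genuine obstacle here. The only points needing a little care are the homogeneity rescaling that identifies $\pplamplus(F)$ with $\lambda_{\beta}^{\prime\prime,+}(F)$ for $\beta\equiv1$ (and its ``minus'' analogue), and invoking the growth pair of \cref{T2.4} that matches whichever of \eqref{E2.5}, \eqref{E2.6} is assumed together with the (trivial) compatibility of $\beta\equiv1$ with both.
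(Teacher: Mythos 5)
Your proposal is correct and follows exactly the route the paper intends: the corollary is stated as an immediate consequence of \cref{T2.4}, obtained by taking $\beta\equiv 1$ (which satisfies both \eqref{E2.3} and \eqref{E2.4}), identifying $\lambda_{\beta}^{\prime\prime,\pm}(F)$ with $\pplamplus(F)$, $\pplamminus(F)$ via the homogeneity rescaling, and combining (i) and (ii) for the bounded solution in (iii). Your subadditivity argument for the parenthetical implication $\pplamplus(F)>0\Rightarrow\pplamminus(F)>0$ is a correct filling-in of a detail the paper leaves implicit (it is the same convexity mechanism behind the paper's remark that $\lambda_1^+(F)\leq\lambda_1^-(F)$).
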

\begin{corollary}
Suppose that $F$ satisfies $\beta^+$-MP. Let $u, v\in\Sobl^{2,N}(\RN)$ be such that
$$F(D^2 u, Du, u, x)\,\geq 0, \quad F(D^2 v, Dv, v, x)\,\leq 0 \; \text{in}\; \RN, \quad \text{and}\quad 
\sup_{\RN}\frac{u-v}{\beta}\,<\,\infty\,. $$
Then we have $u\leq v$ in $\RN$.
\end{corollary}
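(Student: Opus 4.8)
The plan is to reduce this two–function statement to the single–function maximum principle $\beta^+$-MP that $F$ is assumed to satisfy, the bridge being the \emph{subadditivity} of $F$ in its first three arguments. Set $w\df u-v$. Since $u,v\in\Sobl^{2,N}(\RN)$ we have $w\in\Sobl^{2,N}(\RN)$, and $D^2w=D^2u-D^2v$, $Dw=Du-Dv$ almost everywhere. First I would record the elementary fact that, for each fixed $x$, the map $(M,p,s)\mapsto F(M,p,s,x)$ is subadditive: for two triples $a,b\in\cS_N\times\RN\times\RR$, convexity (H2) gives $F\bigl(\tfrac12(a+b),x\bigr)\leq\tfrac12 F(a,x)+\tfrac12 F(b,x)$, while positive $1$-homogeneity (H1) gives $F\bigl(\tfrac12(a+b),x\bigr)=\tfrac12 F(a+b,x)$; combining the two yields $F(a+b,x)\leq F(a,x)+F(b,x)$.

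Next I would apply this pointwise inequality, at almost every $x\in\RN$, with $a=(D^2w,Dw,w)$ and $b=(D^2v,Dv,v)$, and use $u=v+w$ to obtain
$$0\,\leq\,F(D^2u,Du,u,x)\,\leq\,F(D^2w,Dw,w,x)+F(D^2v,Dv,v,x)\,\leq\,F(D^2w,Dw,w,x)\quad\text{a.e. in }\RN,$$
where the first inequality is the hypothesis on $u$ and the last one uses $F(D^2v,Dv,v,x)\leq 0$. Thus $w$ is an $L^N$-strong subsolution of $F(D^2w,Dw,w,x)\geq 0$ in $\RN$. Since moreover $\sup_{\RN} w/\beta=\sup_{\RN}(u-v)/\beta<\infty$ by assumption, the $\beta^+$-MP applied to $w$ forces $w\leq 0$ in $\RN$, i.e.\ $u\leq v$ in $\RN$, as claimed.

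There is essentially no obstacle here: the only point worth isolating is the passage from the two separate differential inequalities to a single inequality for $F$ evaluated at the difference $w$. For a general nonlinear $F$ this step fails, but it is salvaged in one direction precisely by the subadditivity coming from (H1)--(H2), and one direction is all that the one-sided property $\beta^+$-MP requires. (For $F$ linear this step is an identity, and the argument collapses to the classical deduction of the comparison principle from the maximum principle.) The statement for $(-\beta)^-$-MP, used for instance in the second corollary above, is entirely symmetric, replacing $w$ by $v-u$ and using $F(D^2v,Dv,v,x)\leq 0$, $F(D^2u,Du,u,x)\geq 0$ together with the same subadditivity bound.
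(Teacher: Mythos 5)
Your argument is correct and is essentially the paper's own proof: the paper likewise sets $w=u-v$ and invokes convexity (together with homogeneity, i.e.\ subadditivity) to get $F(D^2w,Dw,w,x)\geq F(D^2u,Du,u,x)-F(D^2v,Dv,v,x)\geq 0$, then concludes by $\beta^+$-MP. Your writeup merely makes the subadditivity step from (H1)--(H2) explicit, which is a fair elaboration rather than a different route.
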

\begin{proof}
Denote by $w=u-v$. By using the convexity of $F$ it follows that 
$$F(D^2 w, Dw, w, x)\geq F(D^2 u, Du, u, x)-F(D^2 v, Dv, v, x)\geq 0\quad \text{in}\; \RN.$$
Hence the result follows from $\beta^+$-MP.
\end{proof}
Generalizing $\plamplus(F)$ and $\plamminus(F)$ we define the following quantities. Let
$\beta$ be a positive valued function and
\begin{align*}
\lambda_{\beta}^{\prime,+}(F)\df\inf \{\lambda\in \mathbb{R}\;:\;\exists\;\psi\in
\Sobl^{2, N}(\RN),\; \beta\geq\psi>0,\;  F(D^2\psi,D\psi,\psi,x) +\lambda\psi\geq 0\; \text{in }\; \RN\}\,,
\end{align*}
and
\begin{align*}
\lambda_{\beta}^{\prime,-}(F) :=\inf \{\lambda\in \mathbb{R}\;:\;\exists\; \psi\in
\Sobl^{2, N}(\RN),\; -\beta\leq\psi<0,\;  F(D^2\psi,D\psi,\psi,x)+\lambda\psi\leq 0\; \text{in}\; \RN\}\,.
\end{align*}
As a necessary condition for the validity of maximum principles we deduce the following.
\begin{theorem}\label{T2.5}
The following hold.
\begin{itemize}
\item[(i)] If $F$ satisfies the $\beta^+$-MP then $\lambda_{\beta}^{\prime,+}(F)\geq 0$. In particular, 
if $F$ satisfies $+$MP then we have $\lambda_{1}^{\prime,+}(F)\geq 0$.
\item[(ii)] If $F$ satisfies the $(-\beta)^-$-MP then $\lambda_{\beta}^{\prime,-}(F)\geq 0$.
In particular, if $F$ satisfies the $-$MP then we have $\lambda_{1}^{\prime,-}(F)\geq 0$.
\end{itemize}
\end{theorem}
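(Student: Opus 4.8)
The plan is to argue by contradiction directly from the definition of $\lambda_{\beta}^{\prime,+}(F)$, using the $\beta^+$-MP as the only ingredient. Suppose $\lambda_{\beta}^{\prime,+}(F)<0$. Then I can pick some $\lambda<0$ and $\psi\in\Sobl^{2,N}(\RN)$ with $0<\psi\leq\beta$ in $\RN$ and $F(D^2\psi,D\psi,\psi,x)+\lambda\psi\geq 0$ in $\RN$. Because $\lambda<0$ and $\psi>0$, this already yields $F(D^2\psi,D\psi,\psi,x)\geq-\lambda\psi\geq 0$ in $\RN$, and the bound $\psi\leq\beta$ gives $\sup_{\RN}\psi/\beta\leq 1<\infty$. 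Since $\psi\in\Sobl^{2,N}(\RN)$, the $\beta^+$-MP applies to $u=\psi$ and forces $\psi\leq 0$, contradicting $\psi>0$; hence $\lambda_{\beta}^{\prime,+}(F)\geq 0$. For the ``in particular'' assertion I would take $\beta\equiv 1$: if $\psi\in\Sobl^{2,N}(\RN)\cap\Lp^\infty(\RN)$ is positive and satisfies $F(D^2\psi,D\psi,\psi,x)+\lambda\psi\geq 0$ with $\lambda<0$, then $\psi/\norm{\psi}_{\infty}$ is again such a supersolution by the $1$-homogeneity (H1) and satisfies $0<\psi/\norm{\psi}_{\infty}\leq 1$, so the previous argument applies and gives $\lambda_{1}^{\prime,+}(F)\geq 0$.

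For part (ii) I would run the mirror-image argument. Assuming $\lambda_{\beta}^{\prime,-}(F)<0$, choose $\lambda<0$ and $\psi\in\Sobl^{2,N}(\RN)$ with $-\beta\leq\psi<0$ and $F(D^2\psi,D\psi,\psi,x)+\lambda\psi\leq 0$. Now $\lambda<0$ and $\psi<0$ give $F(D^2\psi,D\psi,\psi,x)\leq-\lambda\psi\leq 0$, while $\psi\geq-\beta$ gives $\sup_{\RN}\psi/(-\beta)\leq 1<\infty$, so the $(-\beta)^-$-MP forces $\psi\geq 0$, a contradiction. The normalized statement $\lambda_{1}^{\prime,-}(F)\geq 0$ under $-$MP follows by the same rescaling via (H1).

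There is no real analytic difficulty here; the theorem is essentially a tautological consequence of the definitions (it records that the quantities $\lambda_{\beta}^{\prime,\pm}(F)$ are, by construction, obstructions to the respective maximum principles, complementing the sufficiency in \cref{T2.4}). The only points requiring care are sign bookkeeping: one must check that the one-sided pinch $0<\psi\leq\beta$ (resp.\ $-\beta\leq\psi<0$) coincides with the growth restriction $\sup_{\RN}\psi/\beta<\infty$ (resp.\ $\sup_{\RN}\psi/(-\beta)<\infty$) appearing in the definition of the maximum principle, that $-\lambda\psi$ has the correct sign so that $\psi$ becomes an admissible sub/supersolution for the MP, and — in the normalized statements — that (H1) legitimately lets us rescale a merely bounded candidate to one bounded by $1$ without changing the sign of $F(D^2\psi,D\psi,\psi,x)+\lambda\psi$.
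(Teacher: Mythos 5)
Your argument is correct and is essentially the paper's own proof: both proceed by contradiction, taking $\lambda<0$ above $\lambda_{\beta}^{\prime,\pm}(F)$ and observing that the corresponding $\psi$ is an admissible function violating $\beta^{+}$-MP (resp.\ $(-\beta)^{-}$-MP). Your extra rescaling step for the $\beta\equiv 1$ case via (H1) is harmless but not even needed, since $+$MP only requires $\sup_{\RN}\psi<\infty$, which boundedness of $\psi$ already gives.
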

Finally, we discuss about simplicity of the principal eigenvalues. For linear $F$ uniqueness
of principal eigenfunctions can be established imposing {\it Agmon's minimal growth condition at
 infinity} \cite[Definition~8.2]{BR15} on the eigenfunctions.  But such criterion does not seem to
 work well for nonlinear $F$.
Recently, in \cite[Theorem~2.1]{ABG} it is shown that Agmon's minimal growth criterion
 is equivalent to {\it monotonicity of the principal eigenvalue on the right}. Our next result
 establish simplicity of principal eigenvalue under certain monotonicity
 condition of principal eigenvalue {\it at infinity}.
\begin{theorem}\label{T2.6}
Suppose that there exists a positive $V\in\Sobl^{2, N}(\RN)$ satisfying
\begin{equation}\label{ET2.6A}
F(D^2V, DV, V, x)\,\leq\, -(\lambda_1^+(F)+\varepsilon) V\quad \text{for all}\; x\in K^c,
\end{equation}
for some compact ball $K$ and $\varepsilon>0$.
Then $\lambda^+_1(F)$ is simple i.e. the positive eigenfunction is unique upto a multiplicative constant.
\end{theorem}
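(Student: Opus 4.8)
The plan is to show that any two positive principal eigenfunctions $\phi_1,\phi_2\in\Sobl^{2,p}(\RN)$ ($p>N$) associated with $\lambda_1^+(F)$, whose existence is guaranteed by \cref{T2.1}, are proportional. Put $G(M,p,u,x)\df F(M,p,u,x)+\lambda_1^+(F)u$. By (H1)--(H2) the operator $G$ is convex and positively $1$-homogeneous in $(M,p,u)$, hence subadditive, so the difference of two $L^N$-strong solutions of $G=0$ is an $L^N$-strong subsolution of $G\ge 0$, and $G(\psi-\theta V)\ge G(\psi)-\theta\,G(V)$ for $\theta>0$. I may assume $K=\overline{\sB}_{R_0}$, so that \eqref{ET2.6A} reads $G(D^2V,DV,V,x)\le-\varepsilon V$ in $\sB_{R_0}^c$.

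The first ingredient is a comparison principle against the barrier $V$: \emph{if $R\ge R_0$ and $w\in\Sobl^{2,N}(\sB_R^c)$ satisfies $G(D^2w,Dw,w,x)\ge 0$ in $\sB_R^c$, $w\le 0$ on $\partial\sB_R$ and $\limsup_{\abs{x}\to\infty}w(x)/V(x)\le 0$, then $w\le 0$ in $\sB_R^c$.} To see this, suppose $\eta\df\sup_{\sB_R^c}(w/V)>0$; by the growth assumption and $w\le 0$ on $\partial\sB_R$ the supremum is attained at an interior point $x_0$, and $z\df w-\eta V\le 0$ satisfies $z(x_0)=0$. Subtracting $G(D^2(\eta V),D(\eta V),\eta V,x)\le-\eta\varepsilon V$ from $G(D^2w,Dw,w,x)\ge 0$ and using (H3) (with $\abs{z}=-z$ near $x_0$) shows that $z$ is an $L^N$-strong subsolution of
\[
\cM^+_{\lambda,\Lambda}(x,D^2z)+\gamma(x)\abs{Dz}-\bigl(\delta(x)-\lambda_1^+(F)\bigr)z\ \ge\ \eta\varepsilon V\ >\ 0
\]
near $x_0$; since $z$ attains its maximal value $0$ at $x_0$, one may replace the zeroth-order coefficient by its nonpositive part and conclude, by the strong maximum principle for strong solutions of uniformly elliptic equations with locally bounded coefficients, that $z\equiv 0$ near $x_0$, contradicting the strict inequality (for $N=1$ one argues on each component of $\sB_R^c$). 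Moreover $V$ itself witnesses $\lambda_1^+(F,\sB_R^c)\ge\lambda_1^+(F)+\varepsilon$ for all $R\ge R_0$, so that $\lambda_1^+(G,\Omega)=\lambda_1^+(F,\Omega)-\lambda_1^+(F)>0$ and hence $G$ obeys the maximum principle on bounded subdomains $\Omega\subseteq\sB_{R_0}^c$ (cf.\ \cite{QS08}).

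The crux — and the step I expect to be the main obstacle — is to upgrade this to a statement about the eigenfunctions themselves: \emph{every positive principal eigenfunction $\phi$ satisfies $\phi(x)/V(x)\to 0$ as $\abs{x}\to\infty$.} This is exactly where the strict gap $\varepsilon>0$ in \eqref{ET2.6A} is used, rather than merely the existence of a positive supersolution: for each $\theta>0$ the function $\phi-\theta V$ is a \emph{strict} subsolution, $G(D^2(\phi-\theta V),\ldots)\ge\theta\varepsilon V>0$ in $\sB_{R_0}^c$, which is $\le 0$ on $\partial\sB_{R_0}$ once $\theta$ is large; combining this with the maximum principle of the previous paragraph on bounded subdomains and exhausting $\sB_{R_0}^c$ by annuli, a Phragm\'en--Lindel\"of-type argument against $V$ should force $\phi\le\theta V$ outside a sufficiently large ball for every $\theta>0$, and letting $\theta\downarrow 0$ gives the claim. (Equivalently, \eqref{ET2.6A} expresses a strict monotonicity of $\lambda_1^+(F)$ ``at infinity'', and one may appeal to the equivalence with Agmon's minimal growth condition recorded in \cite[Theorem~2.1]{ABG} to place every positive eigenfunction in minimal growth at infinity.) Carrying this out without any a priori growth control on $\phi$ is the delicate point.

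Finally, granting the decay, I would conclude by sliding. Let $c^{*}\df\sup_{\RN}\phi_1/\phi_2$. Applying the comparison principle to $w\df\phi_1-t\phi_2$ — a subsolution of $G\ge 0$, with $w\le 0$ on $\partial\sB_{R_0}$ for $t$ large and $\limsup_{\abs{x}\to\infty}w/V\le\limsup_{\abs{x}\to\infty}\phi_1/V=0$ — gives $\phi_1\le t\phi_2$ in $\sB_{R_0}^c$, hence (with the trivial bound on $\overline{\sB}_{R_0}$) $c^{*}<\infty$. If $c^{*}$ is attained at some $x_0$, then $w\df\phi_1-c^{*}\phi_2\le 0$ is a subsolution of $G\ge 0$ attaining its maximal value $0$ at $x_0$, so the strong maximum principle forces $\phi_1\equiv c^{*}\phi_2$. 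If $c^{*}$ is not attained, then $\phi_1/\phi_2<c^{*}$ everywhere; setting $c_0\df\sup_{\partial\sB_{R_0}}\phi_1/\phi_2<c^{*}$, the function $\phi_1-c_0\phi_2$ is a subsolution of $G\ge 0$, is $\le 0$ on $\partial\sB_{R_0}$, and satisfies $\limsup_{\abs{x}\to\infty}(\phi_1-c_0\phi_2)/V\le 0$, so the comparison principle yields $\phi_1\le c_0\phi_2$ in $\sB_{R_0}^c$; combined with the corresponding strict bound on the compact set $\overline{\sB}_{R_0}$ this gives $\sup_{\RN}\phi_1/\phi_2<c^{*}$, a contradiction. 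Hence $c^{*}$ is attained and $\phi_1=c^{*}\phi_2$, which establishes the simplicity of $\lambda_1^+(F)$.
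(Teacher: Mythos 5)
Your overall architecture (comparison against the barrier $V$ plus a sliding argument) is reasonable, and your barrier comparison principle on $\sB_{R}^c$ is essentially sound, but the step you yourself flag as the crux is a genuine gap, not a routine verification: you never prove that an arbitrary positive principal eigenfunction $\phi$ satisfies $\phi/V\to 0$ (or even $\phi\leq CV$) at infinity, and your entire sliding argument needs exactly this to feed the hypothesis $\limsup_{\abs{x}\to\infty}(\phi_1-t\phi_2)/V\leq 0$ into the comparison lemma. The sketched Phragm\'en--Lindel\"of exhaustion does not close: applying the maximum principle for $G$ on an annulus $\sB_R\setminus\overline\sB_{R_0}$ to $\phi-\theta V$ requires $\phi\leq\theta V$ on the \emph{outer} boundary $\partial\sB_R$, which is precisely the growth information on $\phi$ you do not have, so the argument is circular. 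The parenthetical appeal to \cite[Theorem~2.1]{ABG} does not repair this either; that equivalence is established in a different (non fully nonlinear) setting, and the paper explicitly remarks that Agmon-type minimal growth criteria ``do not seem to work well for nonlinear $F$''--this is the very reason a different proof is given. In short, the decay-at-infinity claim is essentially as hard as the simplicity statement itself, and leaving it as ``the delicate point'' leaves the proof incomplete.

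The paper's proof is organized precisely so that no growth information on the arbitrary eigenfunction $\psi$ is ever needed. It normalizes the Dirichlet eigenfunctions $\psi^+_{1,n}$ of $\sB_n$ so that $\kappa_n\psi^+_{1,n}$ touches $V$ from below, shows via the strong maximum principle and \eqref{ET2.6A} that the touching point must lie in $K$, and passes to the limit to obtain one distinguished eigenfunction $\varphi\leq V$. Then, for any other eigenfunction $\psi$, it compares $\psi$ \emph{from below} with $\delta\,(\psi^+_{1,n}-\eta V)$ on $\sB_n\cap K^c$: the subtraction of $\eta V$ makes the comparison function a strict subsolution, and the boundary data are free of charge because $\psi^+_{1,n}=0$ on $\partial\sB_n$ (so $\psi^+_{1,n}-\eta V<0<\psi$ there), while $\lambda^+_1(F,K^c)\geq\varepsilon>0$ justifies the maximum principle of \cite[Theorem~1.5]{QS08} on these bounded domains. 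Letting $n\to\infty$, $\eta\to 0$ produces $\theta\varphi\leq\psi$ with an interior touching point, and the strong maximum principle finishes. If you want to salvage your approach, you should either prove the minimal-growth estimate for arbitrary eigenfunctions (which is the hard content) or restructure the comparison, as the paper does, so that the only functions you need to dominate by $V$ are the Dirichlet approximations you construct yourself.
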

We remark that \eqref{ET2.6A} is equivalent to 
$$\lambda^+_1(F)\,<\, \lim_{r\to\infty}\lambda^+_1(F, \bar\sB^c_r).$$
Our next result is about simplicity of $\lambda^{-}_1(F)$.
 \begin{theorem}\label{T2.7}
Suppose that there exists a positive $V\in\Sobl^{2, N}(\RN)$ satisfying
\begin{equation}\label{ET2.7B}
F(D^2V, DV, V, x)\leq -(\lambda_1^-(F)+\varepsilon) V\quad \text{for all}\; x\in K^c,
\end{equation}
for some compact ball $K$ and $\varepsilon>0$.
Then $\lambda^-_1(F)$ is simple.
\end{theorem}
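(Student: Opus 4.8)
The plan is to run the proof of \cref{T2.6} with sub- and supersolutions interchanged. It is convenient to pass to the operator $\hat F(M,p,u,x)\df -F(-M,-p,-u,x)$: it is positively $1$-homogeneous, is \emph{concave} (since $F$ is convex), and a short computation shows it obeys the same structure condition (H3); moreover the correspondence $\psi\mapsto-\psi$ carries each member of $\Psi^-(F,\Omega,\mu)$ to a positive supersolution of $\hat F(\cdot)+\mu(\cdot)=0$ in $\Omega$, so that $\lambda_1^+(\hat F,\Omega)=\lambda_1^-(F,\Omega)$ for every domain $\Omega$, and $\chi\df-\phi>0$ is an eigenfunction of $\hat F$ at $\mu$ precisely when $\phi<0$ is an eigenfunction of $F$ at $\mu$.

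Let $\phi_1,\phi_2\in\Sobl^{2,p}(\RN)$, $p>N$, be negative eigenfunctions associated with $\lambda\df\lambda_1^-(F)$, and put $\chi_i\df-\phi_i>0$, so $\hat F(D^2\chi_i,D\chi_i,\chi_i,x)+\lambda\chi_i=0$ in $\RN$; I must show $\chi_1=c\,\chi_2$ for some $c>0$. As $\hat F$ is concave and positively $1$-homogeneous it is superadditive in $(M,p,u)$, so writing $\chi_1=(\chi_1-t\chi_2)+t\chi_2$ with $t>0$ and using the eigenfunction equations gives
\[
\hat F\bigl(D^2(\chi_1-t\chi_2),D(\chi_1-t\chi_2),\chi_1-t\chi_2,x\bigr)+\lambda(\chi_1-t\chi_2)\;\le\;0\qquad\text{in }\RN,
\]
so each $\chi_1-t\chi_2$ (and likewise $\chi_2-t\chi_1$, and $\chi_*-t\chi_i$ below) is an $L^N$-strong supersolution of $\hat F(\cdot)+\lambda(\cdot)=0$.

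Now \eqref{ET2.7B} is the engine. Since $V>0$ and $F(D^2V,DV,V,x)+(\lambda+\varepsilon)V\le0$ in $K^c$, we have $V\in\Psi^+(F,K^c,\lambda+\varepsilon)$, hence $\lambda<\lambda+\varepsilon\le\lambda_1^+(F,K^c)$; equivalently, \eqref{ET2.7B} is the strict inequality $\lambda_1^-(F)<\lim_{r\to\infty}\lambda_1^+(F,\bar\sB^c_r)$. Thus $\lambda$ lies strictly below the relevant exterior principal eigenvalue, so a comparison (maximum/minimum) principle for $\hat F(\cdot)+\lambda(\cdot)=0$ holds on $K^c$, a positive strict supersolution produced by the gap serving as the barrier that controls the growth of competitors at infinity. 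Arguing as in the proof of \cref{T2.6}, and using the exhaustion of $\RN$ by balls behind \cref{T2.1}, one obtains a distinguished positive eigenfunction $\chi_*$ of $\hat F$ at $\lambda$ with the \emph{minimal growth} property: every positive $L^N$-strong supersolution $w$ of $\hat F(\cdot)+\lambda(\cdot)=0$ in $K^c$ with $w\ge\chi_*$ on $\partial K$ satisfies $w\ge\chi_*$ in $\overline{K^c}$. It then suffices to show that an arbitrary positive eigenfunction $\chi$ of $\hat F$ at $\lambda$ is a positive multiple of $\chi_*$; applying this to $\chi_1$ and to $\chi_2$ finishes the proof.

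Fix such a $\chi$ and set $\rho\df\chi_*/\chi$, a positive continuous function on $\RN$ (as $\chi_i\in\Sobl^{2,p}\subset C^1$ and $\chi_i>0$). Applying the minimal growth property to the competitor $C\chi$, $C\df\max_{\partial K}\rho\in(0,\infty)$ (a positive supersolution with $C\chi\ge\chi_*$ on $\partial K$), gives $\rho\le C$ in $\overline{K^c}$; applying the comparison principle on $K^c$ at level $\lambda$ to the supersolution $\chi_*-m_0\chi$, $m_0\df\min_{\partial K}\rho>0$ (which is $\ge0$ on $\partial K$), gives $\rho\ge m_0$ in $\overline{K^c}$. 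Since $m_0=\min_{\partial K}\rho\ge\inf_{\bar K}\rho$, it follows that $\inf_{\RN}\rho=\inf_{\bar K}\rho=:\Theta^*\in(0,\infty)$ is attained at some interior point $x_0\in\bar K$. Then $z\df\chi_*-\Theta^*\chi\ge0$ in $\RN$, $z(x_0)=0$, and by the superadditivity computation $z$ is an $L^N$-strong supersolution of $\hat F(\cdot)+\lambda(\cdot)=0$; by (H3) it is therefore a nonnegative $L^N$-strong supersolution of $\cM^-_{\lambda,\Lambda}(x,D^2z)-\gamma(x)|Dz|-(\delta(x)+|\lambda|)z\le0$ in $\RN$ with locally bounded coefficients, so the strong maximum principle forces $z\equiv0$, i.e.\ $\chi=(\Theta^*)^{-1}\chi_*$, which is the assertion. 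The main obstacle is precisely the step just used: to apply the comparison principle on $K^c$ to $\chi_*-m_0\chi$ and the minimal growth property to $C\chi$, one needs genuine control of the growth of the eigenfunctions $\chi$ and $\chi_*$ at infinity relative to the barrier — which is exactly what \eqref{ET2.7B} (the spectral gap) supplies; these a priori estimates and the minimal growth property are obtained as in the proof of \cref{T2.6}, the only change being that the sign reversal forced by $\phi_i<0$ transfers the whole argument from the convex operator $F$ to the concave operator $\hat F$ and from maximum to minimum principles, and the strong maximum principle for $L^N$-strong supersolutions of uniformly elliptic operators with locally bounded coefficients invoked above is standard.
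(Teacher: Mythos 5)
Your overall skeleton---passing to the concave operator $\hat F(M,p,u,x)=-F(-M,-p,-u,x)$ (the paper's $G$), checking it satisfies (H3), using superadditivity to turn differences of eigenfunctions into supersolutions of $\hat F+\lambda$, locating an interior touching point and concluding with the strong maximum principle---is the same as the paper's. The genuine gap is the step where you obtain $\rho\ge m_0$ on $K^c$ by ``applying the comparison principle on $K^c$ at level $\lambda$'' to $\chi_*-m_0\chi$, justified only by the spectral gap $\lambda_1^+(F,K^c)\ge\lambda+\varepsilon$ coming from \eqref{ET2.7B}. Positivity of the exterior principal eigenvalue does not by itself give a maximum principle on the unbounded domain $K^c$: as \cref{T2.4,T2.5} show, such principles require growth conditions on the coefficients (\eqref{E2.5} or \eqref{E2.6}), which are not assumed in \cref{T2.7}, \emph{and} a growth bound on the competitor relative to an admissible barrier. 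Here the function being compared is $m_0\chi-\chi_*$, and nothing controls the arbitrary eigenfunction $\chi$ at infinity --- \eqref{ET2.7B} constrains $V$, not $\chi$ --- so the required control ``of the growth of $\chi$ relative to the barrier'' that you claim the spectral gap supplies is simply not available; a priori $\chi$ could grow much faster than $V$ or $\chi_*$. The same objection applies to the ``minimal growth property'' of $\chi_*$ against all positive supersolutions on $K^c$, which you assert ``as in \cref{T2.6}'' without proof; it is in fact obtainable, but only by the bounded-domain device below, not by an exterior comparison.

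The paper's proof needs no information on the competitor at infinity: it compares the arbitrary eigenfunction with $\delta\,\Xi_\eta$, $\Xi_\eta=\phi_n-\eta V$, on the bounded sets $K^c\cap\sB_n$, where $\phi_n=-\psi^-_{1,n}$ is the sign-reversed Dirichlet eigenfunction of $\sB_n$. Since $\phi_n=0$ on $\partial\sB_n$, the comparison function is negative on the outer boundary, so the maximum principle of \cite[Theorem~1.5]{QS08} applies with no condition at infinity (it is available because $\lambda_1^+(F,K^c\cap\sB_n)\to\lambda_1^+(F,K^c)\ge\lambda+\varepsilon>\lambda$, again thanks to $V$, and convexity of $F$ converts the ordering of the two functions into a subsolution inequality for their difference); letting $n\to\infty$ and $\eta\to0$ yields $\chi\ge\theta\chi_*$ with a touching point in $K$, and the strong maximum principle finishes. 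If you first establish your minimal growth property by this same bounded-domain device, your scheme can be repaired without the flawed step: apply it to the competitor $C'\chi$ with $C'=\max_{\partial K}\chi_*/\chi$ to get $\rho\le C'$ on $K^c$, so $\sup_{\RN}\rho$ is attained in $\bar K$, and then run your strong-maximum-principle argument on $z=(\sup_{\RN}\rho)\,\chi-\chi_*$, which is again a nonnegative supersolution by superadditivity and vanishes at an interior point.
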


%%%%%%%%%%%%%%%%%%%%%%%%%%%%%%%%%%%%%%%%%%%%%%%%%%%%%%%%%%%%%%%%%%%%%%%%%%%%%%%%%%%%%%%%%%%%%%%%%%%%%%%%%%%
\subsection{Motivation}\label{S-motiv}
One of the important examples of  $F$ comes from the control theory. In particular, we may consider
\begin{equation}\label{E2.8}
F(D^2\phi,D\phi,\phi, x)\,=\,\sup_\alpha\{\trace(a_\alpha(x)D^2\phi(x))+ b_\alpha(x)\cdot D\phi(x) + c_\alpha(x)\phi(x)\}=\, \sup_\alpha \{L_\alpha\phi + c_\alpha\phi\}\,,
\end{equation}
where $\alpha$ varies over some index set $\cI$, $\lambda(x) I\leq a_\alpha(x)\leq \Lambda(x) I$,
and $\sup_{\alpha\in\cI}|b_\alpha(x)|, \sup_{\alpha\in\cI}|c_\alpha(x)|$ are locally bounded. The eigenvalue
problem corresponding to the operator $F$ appears in the study of \textit{risk-sensitive controls}.
See for instance, \cite{ABBS, ABS19} and references therein. To elaborate, suppose that $\cI$ is a
compact subset subset of $\RR^m$. Let $\Uadm$ be the collection of
Borel measurable maps $\alpha:\RN\to\cI$. Note that constant functions
are also included in $\Uadm$. This set $\Uadm$ represents the collection of all Markov controls. Given $\alpha\in\Uadm$,
suppose that $X_\alpha$ is the Markov
diffusion process with generator $L_\alpha$. Denote the law of $X_\alpha$ by $\mathbb{P}_\alpha$
and $\Exp_\alpha[\cdot]$ is the expectation operator associated with it. Consider the maximization problem
$$\Lambda\,=\, \sup_{\alpha\in\Uadm}\, \limsup_{T\to\infty}\, 
\frac{1}{T}\log\Exp_\alpha\left[e^{\int_0^T c_\alpha(X_t) \D{t}}\right].$$
Then under reasonable hypothesis, one can show that $\Lambda$ is an eigenvalue of $F$ (i.e. $\Lambda\in\mathcal{E}^+$)
and for many practical reasons it is desirable that $\Lambda=\lambda^+_1(F)$. Also, simplicity
of $\lambda^+_1(F)$ is important to find an optimal strategy or control. We refer the readers to 
\cite{ABBS, ABS19} for more details on this problem.

%%%%%%%%%%%%%%%%%%%%%%%%%%%%%%%%%%%%%%%%%%%%%%%%%%%%%%%%%%%%%%%%%%%%%%%%%%%%%%%%%%%%%%%%%%%%%%%%%%%%%%%%%%%%
\section{Proofs of main results}\label{S-proof}
In this section we prove \cref{T2.1,T2.2,T2.3,T2.4,T2.5,T2.6,T2.7}. Let us start by recalling the following
Harnack inequality from \cite[Theorem~3.6]{QS08} which will be crucial for our proofs. The
result in \cite[Theorem~3.6]{QS08} is stated for $L^N$-viscosity solutions and also applies to
$L^N$-strong solutions due to \cite[Lemma~2.5]{CCKS}.
\begin{theorem}\label{T3.1}
Let $\Omega\subset\RN$ be bounded. Let $u\in\cC(\bar\Omega)\cap\Sobl^{2,N}(\Omega)$ and $f\in L^N(\Omega)$ satisfy $u\geq 0$ in $\Omega$ and
\begin{align*}
\cM^+_{\lambda,\gamma}(x, D^2 u) + \gamma |Du| +\delta u & \geq \, f \quad \text{in}\; \Omega,
\\
\cM^-_{\lambda,\gamma}(x, D^2 u) - \gamma |Du| -\delta u & \leq \, f \quad \text{in}\; \Omega.
\end{align*}
Then for any compact set $K\Subset\Omega$ we have
$$\sup_{K} u \;\leq\; C\,[\inf_{K} u + \norm{f}_{L^N(\Omega)}]\,,$$
for some constant $C$ dependent on $K, \Omega, N, \gamma, \delta$, $\min_{\Omega}\lambda$ and
$\max_{\Omega}\Lambda$.
\end{theorem}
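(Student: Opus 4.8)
Since \cref{T3.1} merely records a local Harnack estimate, the plan is not to reprove Krylov--Safonov/Caffarelli theory but to deduce the statement from \cite[Theorem~3.6]{QS08} after two elementary reductions: freezing the ellipticity and the first- and zeroth-order bounds to constants, and then passing from $L^N$-strong to $L^N$-viscosity solutions.

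First I would use that $\Omega$ is bounded, so $\bar\Omega$ is compact; by the standing assumption on $\lambda,\Lambda$ and the local boundedness of $\gamma,\delta$ the constants $\lambda_0\df\min_{\bar\Omega}\lambda>0$, $\Lambda_0\df\max_{\bar\Omega}\Lambda<\infty$, $\gamma_0\df\sup_{\bar\Omega}\gamma<\infty$ and $\delta_0\df\sup_{\bar\Omega}\delta<\infty$ are well defined. From the eigenvalue representation of the Pucci extremal operators one reads off that $\cM^+_{\lambda,\Lambda}(x,M)$ is nonincreasing in its first ellipticity argument and nondecreasing in the second, while $\cM^-_{\lambda,\Lambda}(x,M)$ behaves oppositely; hence $\cM^+_{\lambda,\Lambda}(x,M)\le\cM^+_{\lambda_0,\Lambda_0}(M)$ and $\cM^-_{\lambda,\Lambda}(x,M)\ge\cM^-_{\lambda_0,\Lambda_0}(M)$ for all $x\in\bar\Omega$ and $M\in\cS_N$. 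Combining this with $u\ge0$ and $0\le\gamma\le\gamma_0$, $0\le\delta\le\delta_0$, the two hypotheses of the theorem upgrade to the constant-coefficient inequalities
\begin{align*}
\cM^+_{\lambda_0,\Lambda_0}(D^2u)+\gamma_0\abs{Du}+\delta_0 u & \,\geq\, f,
\\
\cM^-_{\lambda_0,\Lambda_0}(D^2u)-\gamma_0\abs{Du}-\delta_0 u & \,\leq\, f\qquad\text{a.e.\ in }\Omega.
\end{align*}
Since $u\in\Sobl^{2,N}(\Omega)$ satisfies these pointwise a.e., \cite[Lemma~2.5]{CCKS} (the point flagged in the paragraph preceding \cref{T3.1}) shows that $u$ is at once an $L^N$-viscosity subsolution of $\cM^+_{\lambda_0,\Lambda_0}(D^2u)+\gamma_0\abs{Du}+\delta_0 u=f$ and an $L^N$-viscosity supersolution of $\cM^-_{\lambda_0,\Lambda_0}(D^2u)-\gamma_0\abs{Du}-\delta_0 u=f$, with $u\in\cC(\bar\Omega)$, $u\ge0$ and $f\in L^N(\Omega)$.

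Finally I would apply \cite[Theorem~3.6]{QS08} verbatim: for a nonnegative $L^N$-viscosity function trapped between such constant-ellipticity extremal inequalities with bounded first- and zeroth-order coefficients, that result yields $\sup_K u\le C(\inf_K u+\norm{f}_{L^N(\Omega)})$ for every $K\Subset\Omega$, with $C=C(K,\Omega,N,\lambda_0,\Lambda_0,\gamma_0,\delta_0)$; unwinding the definitions of $\lambda_0,\Lambda_0,\gamma_0,\delta_0$ gives exactly the asserted dependence $C=C(K,\Omega,N,\gamma,\delta,\min_\Omega\lambda,\max_\Omega\Lambda)$. Since all the analytic content is imported, I expect no genuine obstacle; the only point deserving attention is the monotonicity bookkeeping in the first reduction --- in particular, using $u\ge0$ correctly so that $\delta(x)u\le\delta_0 u$ in the first inequality and $-\delta(x)u\ge-\delta_0 u$ in the second --- which is immediate from the definitions of $\cM^\pm_{\lambda,\Lambda}$.
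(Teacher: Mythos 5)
Your proposal is correct and follows essentially the same route as the paper, which gives no separate proof but simply invokes \cite[Theorem~3.6]{QS08} together with \cite[Lemma~2.5]{CCKS} to pass from $L^N$-strong to $L^N$-viscosity solutions. Your extra step of freezing $\lambda,\Lambda,\gamma,\delta$ to their extrema on the bounded $\bar\Omega$ via the monotonicity of $\cM^{\pm}$ is just the bookkeeping implicit in the paper's stated dependence of $C$ on $\min_{\Omega}\lambda$, $\max_{\Omega}\Lambda$, $\gamma$, $\delta$.
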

Next we prove \cref{T2.1}. The idea is the following: we show using the Harnack inequality and stability
estimate that the Dirichlet principal eigenpair in $\sB_n$ converges to a principal eigenpair in $\RN$. For
any $\lambda<\lambda^+_1(F)$ or $\lambda<\lambda^{-}_1(F)$ we use the refined maximum principle in bounded domains
and then stability estimate to pass the limit.
We spilt the proof of \cref{T2.1} in \cref{L3.1} and \cref{L3.2}.
\begin{lemma}\label{L3.1}
It holds that $\mathcal{E}^+=(-\infty, \lambda^+_1(F)]$.
\end{lemma}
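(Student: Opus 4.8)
The plan is to prove the two inclusions $\mathcal{E}^+\subseteq(-\infty,\lambda_1^+(F)]$ and $(-\infty,\lambda_1^+(F)]\subseteq\mathcal{E}^+$ separately. The first inclusion is the easy direction: if $\lambda\in\mathcal{E}^+$, there is $\phi>0$ in $\Sobl^{2,p}(\RN)$ with $F(D^2\phi,D\phi,\phi,x)=-\lambda\phi$, so in particular $\phi\in\Psi^+(F,\RN,\lambda)$ and hence $\lambda\le\lambda_1^+(F)$ by definition of the half-eigenvalue; moreover if $\lambda<\mu\le\lambda_1^+(F)$ we want $\mu\notin\mathcal E^+$ is not needed---we only need membership for every $\lambda\le\lambda_1^+(F)$, which is the reverse inclusion. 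So the real content is showing that every $\lambda\le\lambda_1^+(F)$ is an eigenvalue with a positive eigenfunction.

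For the reverse inclusion I would first treat the endpoint $\lambda=\lambda_1^+(F)$ by the exhaustion-plus-compactness argument sketched in the paper. Let $\sB_n$ be balls of radius $n$. By \cite{QS08} (which applies since $F$ satisfies its hypotheses), in each smooth bounded domain $\sB_n$ there is a Dirichlet principal eigenpair $(\lambda_1^+(F,\sB_n),\phi_n)$ with $\phi_n>0$ in $\sB_n$, $\phi_n=0$ on $\partial\sB_n$, $\phi_n\in \Sobl^{2,p}(\sB_n)$; normalize $\phi_n(x_0)=1$ at a fixed interior point $x_0$. The sequence $\lambda_1^+(F,\sB_n)$ is nonincreasing in $n$ (monotonicity of the Dirichlet eigenvalue under domain enlargement, from the variational/sub-supersolution characterization in \cite{QS08}) and bounded below by $\lambda_1^+(F)$, since any positive strict supersolution of $F+\lambda$ in $\RN$ restricts to one in $\sB_n$; call the limit $\bar\lambda\ge\lambda_1^+(F)$. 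Then I would apply the Harnack inequality \cref{T3.1} to $\phi_n$ on a fixed compact $K\Subset\sB_m$ (for all $n>m$) with $f=-\lambda_1^+(F,\sB_n)\phi_n$, to get local upper bounds $\sup_K\phi_n\le C(\inf_K\phi_n+\|f\|_{L^N})$; combined with the normalization $\phi_n(x_0)=1$, the standard chaining argument over a connected exhaustion gives local uniform bounds on $\phi_n$ away from $\partial\sB_n$. Interior $\Sobl^{2,p}$ estimates for $L^N$-strong solutions (from the structure conditions (H1)--(H4), again via \cite{QS08,CCKS}) then upgrade this to local $\Sobl^{2,p}$ boundedness, so along a subsequence $\phi_n\to\phi$ in $\Ccl^{1}_{\mathrm{loc}}$ and weakly in $\Sobl^{2,p}_{\mathrm{loc}}$, with $\phi\ge0$, $\phi(x_0)=1$. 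The stability of $L^N$-viscosity/strong solutions under this convergence (the second ingredient named in the paper) passes the equation to the limit: $F(D^2\phi,D\phi,\phi,x)=-\bar\lambda\phi$ in $\RN$, and the Harnack inequality again forces $\phi>0$ everywhere. Thus $\bar\lambda\in\mathcal E^+$, and since I already showed $\mathcal E^+\ni\bar\lambda\Rightarrow\bar\lambda\le\lambda_1^+(F)$, I get $\bar\lambda=\lambda_1^+(F)$, producing a positive eigenfunction at the endpoint.

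For a general $\lambda<\lambda_1^+(F)$, I would use the refined maximum principle in bounded domains from \cite{QS08} (valid because $\lambda<\lambda_1^+(F)\le\lambda_1^+(F,\sB_n)$) to solve the Dirichlet problems $F(D^2\phi_n,D\phi_n,\phi_n,x)+\lambda\phi_n=0$ in $\sB_n$ with boundary data $\phi_n=1$ on $\partial\sB_n$; positivity of $\phi_n$ follows since $F(0,0,0,x)\equiv0$ by (H1) together with the comparison principle. One then needs uniform-in-$n$ local bounds to extract a limit; here I would use the strict sign of $\lambda_1^+(F)-\lambda$ to build a global barrier---take the half-eigenfunction $\psi\in\Psi^+(F,\RN,\mu)$ for some $\lambda<\mu<\lambda_1^+(F)$ and compare $\phi_n$ against a suitable multiple of $\psi$ on $\sB_n\setminus\sB_R$ to get an upper bound independent of $n$, while a lower bound near $x_0$ comes from Harnack. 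Then the same interior-estimate-plus-stability machinery gives $\phi_n\to\phi$ solving $F(D^2\phi,D\phi,\phi,x)+\lambda\phi=0$ in $\RN$ with $\phi>0$, so $\lambda\in\mathcal E^+$.

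The main obstacle I expect is the uniform local bound on the eigenfunctions $\phi_n$ in both regimes: for the endpoint case the normalization kills the ambiguity but one must chain Harnack inequalities along the exhaustion with constants that blow up as $K$ grows, so the argument must be organized as ``fix $K$, then take $n$ large,'' and one must make sure the term $\|f\|_{L^N(\sB_n)}=\lambda_1^+(F,\sB_n)\|\phi_n\|_{L^N(\sB_n)}$ appearing in \cref{T3.1} is controlled---which is why \cref{T3.1} must be applied on $K\Subset\sB_m$ with the $L^N$ norm taken over $\sB_m$, not $\sB_n$, keeping everything on a fixed domain. For the strict case the delicate point is producing the global upper barrier from a half-eigenfunction with eigenvalue strictly between $\lambda$ and $\lambda_1^+(F)$; one has to know such $\psi$ exists and is usable as a comparison function for $L^N$-strong sub/supersolutions, which again rests on the comparison principle of \cite{QS08} and the convexity hypothesis (H2).
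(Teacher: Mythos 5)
Your easy inclusion and your treatment of the endpoint $\lambda=\lambda_1^+(F)$ follow the paper's Step 1 (exhaustion by balls, normalization at the origin, Harnack, interior $\Sob^{2,p}$ estimates, stability), with one technical slip: you propose to apply \cref{T3.1} with $f=-\lambda^+_{1,n}\phi_n$ and control $\norm{f}_{L^N(\sB_m)}$, but $\norm{\phi_n}_{L^N(\sB_m)}$ is precisely the kind of quantity not yet bounded, so that application is circular. The standard fix (implicit in the paper) is to absorb the term $\lambda^+_{1,n}\phi_n$ into the zeroth-order coefficient, i.e.\ apply \cref{T3.1} with $\delta$ replaced by $\delta+\sup_n\abs{\lambda^+_{1,n}}$ and $f=0$, which is legitimate because $\lambda^+_{1,n}$ is uniformly bounded once one assumes the limit $\tilde\lambda>-\infty$.

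The genuine gap is in the case $\lambda<\lambda_1^+(F)$, where you depart from the paper. First, the Dirichlet problem with boundary datum $1$ is not covered by the results of \cite{QS08} you cite (they treat zero boundary data; nonzero data needs, e.g., Winter's results, which the paper invokes only later), and positivity of $\phi_n$ requires a minimum principle, i.e.\ $\lambda<\lambda_1^-(F,\sB_n)$, not merely $F(0,0,0,x)\equiv 0$. More seriously, your uniform local bound via the barrier $C\psi$ does not work as stated: to compare $\phi_n$ with $C\psi$ on $\sB_n\setminus\sB_R$ you must dominate $\phi_n$ on both boundary components, which forces $C\geq\max\bigl\{\sup_{\partial\sB_R}\phi_n/\inf_{\partial\sB_R}\psi,\;1/\inf_{\partial\sB_n}\psi\bigr\}$; the first quantity is exactly what you are trying to bound, and the second need not be bounded in $n$ because a supersolution $\psi\in\Psi^+(F,\RN,\mu)$ may decay to zero at infinity. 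So no $n$-independent bound is produced. The paper sidesteps this entirely: it solves $\tilde F(D^2u^n,Du^n,u^n,x)=f_n$ with $u^n=0$ on $\partial\sB_n$ and a nonpositive source $f_n$ supported in the far annulus $\sB_n\setminus\overline{\sB}_{n-1}$ (existence, uniqueness and positivity from Theorems~1.5, 1.8 and the strong maximum principle of \cite{QS08}, valid since $\lambda_1^+(F+\lambda,\sB_n)>0$), then normalizes $v^n=u^n/u^n(0)$; because $f_n\equiv 0$ on $\sB_{n-1}$, the function $v^n$ satisfies the exact equation $F(D^2v^n,Dv^n,v^n,x)=-\lambda v^n$ there with $v^n(0)=1$, so the Step-1 compactness machinery applies verbatim and yields a positive eigenfunction for $\lambda$. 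If you insist on your boundary-datum-$1$ route, you would in any case have to renormalize $\phi_n$ at the origin (using homogeneity) so that Harnack alone gives the local bounds, making the barrier unnecessary; the remaining burden would then be the nonzero-boundary existence theory.
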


\begin{proof}
Let $\lambda^+_1(F, \sB_n)$ be the Dirichlet principal eigenvalue in $\sB_n$
corresponding to the positive principal eigenfunction. Existence of $\lambda^+_1(F, \sB_n)$ follows from 
\cite[Theorem~1.1]{QS08}. For notational economy we denote $\lambda^+_1(F, \sB_n)=\lambda^+_{1,n}$
and $\lambda^+_1(F)=\lambda^+_1$. We also set $E_p(\Omega)=\Sobl^{2,p}(\Omega)\cap\cC(\bar\Omega)$.
We divide the proof into two steps.

\noindent{\bf Step 1.}\, We show that $\lim_{n\to\infty} \lambda^+_{1,n}=\lambda^+_1$ and $\lambda^+_1\in\mathcal{E}^+$. It is obvious from the definition that $\lambda^+_{1,n}$ is decreasing in $n$ and bounded below by
$\lambda^+_{1}$. Thus if $\lim_{n\to\infty} \lambda^+_{1,n}=-\infty$, we also have $\lambda^+_{1}=-\infty$
and there is nothing to prove. So we assume $\lim_{n\to\infty} \lambda^+_{1,n}\df\tilde\lambda>-\infty$.
It is then obvious that $\tilde\lambda\geq \lambda^+_{1}$. 
From \cite[Theorem 1.1]{QS08} we have $\psi_{1,n}^+\in E_p(\sB_n),\,  \forall\, p<\infty $,
such that $\psi^+_{1,n}>0$ in $\sB_n$, $\psi^+_{1,n}=0$ on $\partial \sB_n$ and satisfies
\begin{equation}\label{EL3.1A}
F(D^2\psi^+_{1,n},D\psi_{1,n}^+,\psi_{1,n}^+,x)\,=\,-\lambda_{1,n}^+\psi_{1,n}^+\quad \text{in}\; \sB_n\,,
\end{equation}
for all $n\geq 1$.
Normalize each $\psi_{1,n}^+$ by choosing $\psi_{1,n}^+(0)=1$. Fix any compact $K\subset\RN$ such that $0\in K$
and choose $n_0$ large so that $K\Subset \sB_m$ for all $m\geq n_0$. Applying \cref{T3.1} on \eqref{EL3.1A}
we find
a constant $C=C(n_0)$ satisfying
\begin{align*}
\sup_{K}\,\psi_{1,n}^+\,\leq\, C\,\inf_{K}\psi_{1,n}^+\,\leq C\,\psi_{1,n}^+(0)\,=\,C.
\end{align*}
Thus applying \cite[Theorem 3.3]{QS08} we obtain, for $p>N$, that 
\begin{align*}
\norm{\psi_{1,n}^+}_{\Sob^{2,p}(K)}\,\leq\, C \quad \forall\, n>n_0\,.
\end{align*}
Since $K$ is arbitrary, using a standard diagonalization argument we can find a non-negative
$\varphi^+\in E_p(\RN),\, \forall\, p<\infty$, such that $\psi_{1,n}^+\to\varphi^+$ in $\Sobl^{2, p}(\RN)$,
upto a subsequence. Hence by \cite[Theorem~3.8 and Corollary~3.7]{CCKS} we obtain
\begin{align*}
F(D^2\varphi^+,D\varphi^+,\varphi^+,x)=-\tilde\lambda\varphi^+\quad \text{in}\; \RN, 
\quad  \varphi^+(0)=1.
\end{align*}
Again, applying \cref{T3.1} we have $\varphi^+>0$. Thus, $\tilde\lambda\leq \lambda^+_1$. This shows
$\tilde\lambda= \lambda^+_1$ and $\lambda^+_1\in\mathcal{E}^+$.

\noindent{\bf Step 2.}\, We show that $\mathcal{E}^+=(-\infty, \lambda^+_1]$. It is obvious that 
$\mathcal{E}^+\subset(-\infty, \lambda^+_1]$. To show the reverse relation we consider $\lambda<\lambda^+_1$.
We choose a sequence $\{f_n\}_{n\geq 1}$ of continuous, non positive, non-zero functions
satisfying 
\begin{align*}
\supp(f_n)\subset \sB_n\setminus \overline{\sB}_{n-1}\quad \text{ for all }\; n\in\mathbb{N}.
\end{align*}
Denote by $\tilde F = F + \lambda$.  Then 
$\lambda^+_1(\tilde{F}, \sB_n)=\lambda^+_{1,n}-\lambda\geq \lambda^+_1-\lambda>0$. 
Therefore, by \cite[Theorem~1.5 and Theorem 1.8]{QS08}, there exists a unique 
non-negative $u^n\in E_p(B_n)$, for all $p\geq N$, which satisfies 
\begin{align}\label{EL3.1B}
\tilde{F}(D^2 u^n,Du^n,u^n,x)\,=\,f_n\quad \text{in}\; \sB_n,\quad \text{and}
\quad u^n=0\quad \text{ on }\;\partial \sB_n\,.
\end{align}
By the strong maximum principle \cite[Lemma~3.1]{QS08} it follows that $u^n>0$ in $\sB_n$.
For natural number $n\geq 2$ we define
\begin{align*}
v^n(x)\df\frac{u^n(x)}{u^n(0)}\,.
\end{align*}
Clearly, $v^n\in E_p(\sB_{n-1}),\,\forall\, p<\infty$, positive in $\sB_{n-1}$ and $v^n(0)=1$.
Also, by \eqref{EL3.1B},
\begin{align*}
F(D^2v^n,Dv^n,v^n,x)=-\lambda v^n\quad \text{in}\; \sB_{n-1}\,.
\end{align*}
Now we continue as in Step 1 and extract a subsequence of $v^n$ that converges in $\Sobl^{2, p}(\RN)$
 to some positive $\varphi\in E_p(\RN), \, \forall\, p<\infty$,  and satisfies 
$$F(D^2\varphi,D\varphi,\varphi,x)=-\lambda \varphi\quad \text{in}\; \RN\,.$$
This gives us $\lambda\in \mathcal{E}_+$. Thus $\mathcal{E}_+=(-\infty, \lambda^+_1]$.
\end{proof}

Next lemma concerns the eigenvalues with negative eigenfunctions.

\begin{lemma}\label{L3.2}
It holds that $\mathcal{E}^-=(-\infty, \lambda^-_1(F)]$.
\end{lemma}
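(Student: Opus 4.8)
The plan is to reduce \cref{L3.2} to \cref{L3.1} by the standard sub/supersolution duality. Set $\hat F(M,p,u,x)\df -F(-M,-p,-u,x)$. Using $\cM^+_{\lambda,\Lambda}(x,-M)=-\cM^-_{\lambda,\Lambda}(x,M)$ one checks at once that $\hat F$ is positively $1$-homogeneous, satisfies \textup{(H3)} with the \emph{same} functions $\lambda,\Lambda,\gamma,\delta$, and satisfies \textup{(H4)}; the only structural change is that $\hat F$ is \emph{concave} instead of convex. Moreover $\psi\in\Psi^+(\hat F,\Omega,\lambda)$ if and only if $-\psi\in\Psi^-(F,\Omega,\lambda)$, whence $\lambda_1^-(F,\Omega)=\lambda_1^+(\hat F,\Omega)$; likewise $\chi$ is a negative eigenfunction of $F$ at the level $\lambda$ if and only if $-\chi$ is a positive eigenfunction of $\hat F$ at the same level, so $\mathcal E^-=\mathcal E^+(\hat F)$. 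Hence the claim is equivalent to $\mathcal E^+(\hat F)=(-\infty,\lambda_1^+(\hat F)]$, which is exactly the statement of \cref{L3.1} for $\hat F$.

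Next I would simply rerun the proof of \cref{L3.1} for $\hat F$. Convexity of the operator entered that argument only through the invoked results of \cite{QS08}: existence of the Dirichlet principal eigenpair (\cite[Theorem~1.1]{QS08}), solvability of the Dirichlet problem when the shifted principal eigenvalue is positive (\cite[Theorems~1.5 and~1.8]{QS08}), and the strong maximum principle (\cite[Lemma~3.1]{QS08}); the framework of \cite{QS08} applies equally to concave operators (equivalently, carries the ``minus'' half of the theory), while the Harnack inequality \cref{T3.1} and the $W^{2,p}$ and stability estimates from \cite{QS08,CCKS} use only the Pucci bound \textup{(H3)}, which $\hat F$ inherits. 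Equivalently, one may repeat Steps~1 and~2 of the proof of \cref{L3.1} directly in terms of $F$: in Step~1, let $\psi_{1,n}^-<0$ be the Dirichlet ``minus'' principal eigenfunction of $F$ in $\sB_n$ with eigenvalue $\lambda_{1,n}^-$, normalize $\psi_{1,n}^-(0)=-1$, apply \cref{T3.1} to $-\psi_{1,n}^-$ to get local bounds, and pass to the limit along a subsequence to produce a global negative eigenfunction at the level $\tilde\lambda\df\lim_n\lambda_{1,n}^-$, forcing $\tilde\lambda=\lambda_1^-(F)$; in Step~2, for $\lambda<\lambda_1^-(F)$, solve $(F+\lambda)(D^2u^n,Du^n,u^n,x)=f_n$ in $\sB_n$ with $u^n=0$ on $\partial\sB_n$, where now $f_n$ is continuous, \emph{non-negative}, non-zero and supported in $\sB_n\setminus\overline\sB_{n-1}$, so that $u^n\le 0$, renormalize $v^n=u^n/\abs{u^n(0)}$, and pass to the limit to get a global negative eigenfunction at the level $\lambda$.

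The only point requiring care is the sign bookkeeping when transferring the Dirichlet solvability and principal-eigenvalue results of \cite{QS08} to the ``minus'' (equivalently, concave) case: one needs that $n\mapsto\lambda_{1,n}^-$ is nonincreasing with $\lambda_{1,n}^-\ge\lambda_1^-(F)$, that a \emph{non-negative} right-hand side $f_n$ produces a \emph{non-positive} solution $u^n$ which is strictly negative in $\sB_n$ by the strong maximum principle, and that the normalizations $\psi_{1,n}^-(0)=-1$ and $v^n(0)=-1$ are set up so that \cref{T3.1} may be applied to the positive functions $-\psi_{1,n}^-$ and $-v^n$. With these in place the compactness and stability arguments are word-for-word those of \cref{L3.1}, and the limit is strictly negative by the strong maximum principle, giving $\lambda\in\mathcal E^-$ for every $\lambda\le\lambda_1^-(F)$; the reverse inclusion $\mathcal E^-\subset(-\infty,\lambda_1^-(F)]$ is immediate from the definitions.
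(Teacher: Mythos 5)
Your proposal is correct and follows essentially the same route as the paper: the paper also introduces $G(M,p,u,x)=-F(-M,-p,-u,x)$ to apply the Harnack inequality and strong maximum principle to the positive functions $-\psi_{1,n}^-$, and in Step 2 uses non-negative $f_n$ together with the ``minus'' Dirichlet solvability result \cite[Theorem~1.9]{QS08} before repeating the compactness and stability arguments of \cref{L3.1}. Your sign bookkeeping and the caveat about transferring the \cite{QS08} results to the concave/``minus'' setting match the paper's treatment.
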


\begin{proof}
Idea of the proof is similar to \cref{L3.1}. Let $\lambda^-_1(F, \sB_n)$ be the Dirichlet principal eigenvalue in $\sB_n$ corresponding to the negative principal eigenfunction \cite[Theorem~1.1]{QS08}.
For simplicity we denote $$\lambda^-_1(F, \sB_n)=\lambda^-_{1,n}
\text{ and } \lambda^-_1(F)=\lambda^-_1.$$
We divide the proof of into two steps.

\noindent{\bf Step 1.}\, We show that $\lim_{n\to\infty} \lambda^-_{1,n}=\lambda^-_1$ and $\lambda^-_1\in\mathcal{E}^-$. It is obvious from the definition that $\lambda^-_{1,n}$ in decreasing in $n$ and bounded below by
$\lambda^-_{1}$. Thus if $\lim_{n\to\infty} \lambda^-_{1,n}=-\infty$, we also have $\lambda^-_{1}=-\infty$
and there is nothing to prove. So we assume $\lim_{n\to\infty} \lambda^-_{1,n}\df\widehat\lambda>-\infty$.
It is then obvious that $\widehat\lambda\geq \lambda^-_{1}$.
From \cite[Theorem 1.1]{QS08}, for all $n\in \mathbb{N}$, we have $\psi_{1,n}^-\in E_p(\sB_n),\, \forall\, p<\infty$, such that $\psi_{1,n}^-<0$ in $\sB_n$, $\psi_{1,n}^-=0$ in $\partial \sB_n$, and  
\begin{align}\label{EL3.2A}
F(D^2\psi_{1,n}^-,D\psi_{1,n}^-,\psi_{1,n}^-,x)\,=\,-\lambda_{1,n}^-\psi_{1,n}^-\quad  \text{ in}\; \sB_n\,.
\end{align}
Normalize each $\psi_{1,n}^-$ by fixing $\psi_{1,n}^-(0)=-1$. Denoting $G(M, p, u, x)=-F(-M, -p, -u, x)$
we find from \eqref{EL3.2A}
\begin{align*}
G(D^2\phi_{1,n}^-,D\phi_{1,n}^-,\phi_{1,n}^-,x)\,=\,-\lambda_{1,n}^-\phi_{1,n}^-\quad  \text{ in}\; \sB_n\,,
\end{align*}
for $\phi^-_{1,n}=-\psi^-_{1, n}\geq 0$. Since $G$ satisfies conditions (H1), (H3) and (H4), 
\cref{T3.1} applies. Then using \eqref{EL3.2A} and \cite[Theorem~3.3]{QS08}, we can obtain
locally uniform $\Sobl^{2, p}$ bounds on $\phi^-_{1,n}$.
Now apply the arguments of Step 1 in the proof of \cref{L3.1} to show that $\lim_{n\to\infty} \lambda^-_{1,n}=\lambda^-_1$ and $\lambda^+_1\in\mathcal{E}^-$.

\noindent{\bf Step 2.} As discussed in \cref{L3.1}, it is enough to show that for any $\lambda<\lambda^{-}_1$
we have $\lambda\in\mathcal{E}^-$. Consider a sequence $\{f_n\}_{n\geq 1}$ of continuous, non negative, non-zero functions satisfying 
\begin{align*}
\supp(f_n)\subset \sB_n\setminus \overline{\sB}_{n-1}\quad \text{ for all }\; n\in\mathbb{N}.
\end{align*}
Denote by $\tilde F = F + \lambda$. Then 
$\lambda^-_1(\tilde{F}, \sB_n)=\lambda^-_{1,n}-\lambda\geq \lambda^-_1-\lambda>0$.
Therefore, by \cite[Theorem 1.9]{QS08}, there exists a non-zero, non positive $u^n\in E_p(\sB_n)$,
 for all $p\geq N$, satisfying 
\begin{align*}
\tilde{F}(D^2u^n,Du^n,u^n,x)=f_n\quad \text{in}\; \sB_n,\quad  \text{ and }\quad  u^n=0
\quad  \text{ in }\; \partial \sB_n.
\end{align*}
Since $G$ satisfies (H3) we can apply the strong maximum principle \cite[Lemma~3.1]{QS08} to obtain that 
$u^n<0$ in $\sB_n$. Now repeat the arguments of Step 2 in the proof of \cref{L3.1} to conclude that $\lambda\in\mathcal{E}^-$.
This completes the proof.
\end{proof}

\begin{proof}[Proof of \cref{T2.1}]
The proof follows from \cref{L3.1,L3.2}.
\end{proof}

The following (standard) existence result will be required. 
\begin{lemma}\label{L3.3}
Suppose that $\underline{u},\bar{u}\in E_p(\Omega)$, for some $p\geq N$ and $\Omega$ is a smooth bounded domain, and
$\bar{u}$ ($\underline{u}$) is a supersolution(subsolution) of $F(D^2u, Du, u, x)=f(x,u)$ in $\Omega$ for
some $f\in L^\infty_{\rm loc}(\bar{\Omega}\times\RR)$. Assume that $f$ is locally Lipschitz in its second argument uniformly (almost surely)
with respect to the first argument and $\underline{u}\leq 0, \bar{u}\geq 0$ on $\partial\Omega$.
Then there exists $u\in E_p(\Omega)$ with $\underline{u}\leq u\leq \bar{u}$
in $\Omega$ and satisfies
\[ \begin{split} 
F(D^2u,Du,u,x)&=f(x,u) \quad \text{ in }\Omega,
\\
v &=0\qquad \text{ on }\partial\Omega\,.
\end{split}
\]
\end{lemma}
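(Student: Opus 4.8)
The plan is to construct the solution by the classical method of monotone iteration between the ordered pair $\underline u\le\bar u$, after shifting $F$ by a large zeroth‑order term so that the comparison principle and the solvability of the Dirichlet problem become available from \cite{QS08}. Throughout write $F[v]\df F(D^2 v,Dv,v,x)$, and similarly for other operators. \emph{Reduction to a proper operator.} Since $\Omega$ is bounded and $\underline u,\bar u\in E_p(\Omega)\subset\cC(\bar\Omega)$, the values of $\underline u$ and $\bar u$ lie in a compact interval $[a,b]$; on $\bar\Omega\times[a,b]$ the function $f$ is bounded and, by hypothesis, Lipschitz in the second variable with some constant $L$ uniformly in $x$. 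Fix $K>\max\{L,\,-\lambda_1^+(F,\Omega),\,0\}$ and set $\tilde F(M,p,u,x)\df F(M,p,u,x)-Ku$. Then $\tilde F$ again satisfies (H1)--(H4) (now with $\delta$ replaced by $\delta+K$), the map $s\mapsto f(x,s)-Ks$ is nonincreasing on $[a,b]$ for a.e.\ $x$, and since $\tilde F[\psi]+\lambda\psi=F[\psi]+(\lambda-K)\psi$ one has $\lambda_1^+(\tilde F,\Omega)=\lambda_1^+(F,\Omega)+K>0$. Hence, by \cite[Theorems~1.5 and 1.8]{QS08}, for every $g\in\Lp^\infty(\Omega)$ the problem $\tilde F[w]=g$ in $\Omega$, $w=0$ on $\partial\Omega$, has a unique solution $w\in E_q(\Omega)$ for all $q<\infty$, and $\tilde F$ satisfies the comparison principle in $\Omega$.

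\emph{Monotone iteration.} Put $u_0\df\bar u$; given $u_n\in E_p(\Omega)$ with $\underline u\le u_n\le\bar u$ in $\Omega$, let $u_{n+1}$ be the unique solution of
\[ \tilde F[u_{n+1}]\,=\,f(\cdot,u_n)-K u_n\quad\text{in }\Omega,\qquad u_{n+1}=0\ \text{on }\partial\Omega. \]
I claim $\underline u\le u_{n+1}\le u_n$ in $\Omega$ for every $n$. For $n=0$: as $\bar u$ is a supersolution of the original equation, $\tilde F[\bar u]=F[\bar u]-K\bar u\le f(\cdot,\bar u)-K\bar u=\tilde F[u_1]$ in $\Omega$ with $\bar u\ge 0=u_1$ on $\partial\Omega$, so $u_1\le\bar u$ by comparison; as $\underline u$ is a subsolution and $s\mapsto f(x,s)-Ks$ is nonincreasing with $\underline u\le\bar u$, $\tilde F[\underline u]\ge f(\cdot,\underline u)-K\underline u\ge f(\cdot,\bar u)-K\bar u=\tilde F[u_1]$ with $\underline u\le 0=u_1$ on $\partial\Omega$, so $\underline u\le u_1$. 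The inductive step is verbatim the same: $u_{n+1}\le u_n$ gives $f(\cdot,u_{n+1})-Ku_{n+1}\ge f(\cdot,u_n)-Ku_n$, whence $\tilde F[u_{n+2}]\ge\tilde F[u_{n+1}]$ with equal (zero) boundary data and so $u_{n+2}\le u_{n+1}$; and $\underline u\le u_{n+1}$ gives $\tilde F[\underline u]\ge f(\cdot,\underline u)-K\underline u\ge f(\cdot,u_{n+1})-Ku_{n+1}=\tilde F[u_{n+2}]$, so $\underline u\le u_{n+2}$. Thus $\{u_n\}$ is nonincreasing and trapped between $\underline u$ and $\bar u$.

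\emph{Passage to the limit.} The sequence $\{u_n\}$ is bounded in $\Lp^\infty(\Omega)$, hence so are the right‑hand sides $g_n\df f(\cdot,u_n)-Ku_n\in\Lp^\infty(\Omega)$; by the $W^{2,q}$ a priori estimates for $\tilde F$ (the interior bounds of \cite[Theorem~3.3]{QS08}, complemented near $\partial\Omega$ by the standard barrier and boundary‑regularity arguments available since $\partial\Omega$ is smooth and the boundary datum is fixed) the family $\{u_n\}$ is bounded in $\Sob^{2,q}(\Omega)\cap\cC^{1,\alpha}(\bar\Omega)$ for every $q<\infty$ and some $\alpha\in(0,1)$. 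Since $u_n\downarrow u$ pointwise, it follows that $u_n\to u$ in $\cC^1(\bar\Omega)$ and weakly in $\Sob^{2,q}(\Omega)$; in particular $u\in E_q(\Omega)$, $\underline u\le u\le\bar u$ in $\Omega$, and $u=0$ on $\partial\Omega$. Because $f(x,\cdot)$ is continuous for a.e.\ $x$ and $|g_n|$ is uniformly bounded, $g_n\to g\df f(\cdot,u)-Ku$ in $\Lp^N(\Omega)$; hence, by the stability of $\Lp^N$‑strong solutions under $\Lp^N$‑convergence of the right‑hand side (\cite[Theorem~3.8 and Corollary~3.7]{CCKS}), the limit $u$ solves $\tilde F[u]=f(\cdot,u)-Ku$, i.e.\ $F[u]=f(\cdot,u)$, in $\Omega$, which is the required solution (the iteration in fact produces the maximal solution in $[\underline u,\bar u]$).

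\emph{Main obstacle.} The two non‑routine points are (i) arranging $\lambda_1^+(\tilde F,\Omega)>0$ so that both the comparison principle and the solvability at each iteration step are available --- this is precisely why one shifts by $-Ku$, and is clean --- and (ii) the compactness up to $\partial\Omega$ needed to conclude $u\in E_p(\Omega)$ with zero boundary values, which rests on the global regularity theory for uniformly elliptic fully nonlinear equations; the identification of the nonlinear limit then follows from the stability theorem already used in \cref{L3.1}.
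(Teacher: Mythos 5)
Your proposal is correct and follows essentially the same route as the paper: shift $F$ by a large zeroth-order constant so that existence and comparison for the Dirichlet problem are available from \cite{QS08}, run a monotone iteration between the ordered sub/supersolutions, and pass to the limit via the $\Sob^{2,p}$ estimates and the stability results of \cite{CCKS}. The only (cosmetic) differences are that the paper iterates upward from $\underline{u}$ rather than downward from $\bar{u}$, and it chooses the shift $\theta$ large enough to make $\tilde F$ proper so that the comparison and existence theorems of \cite{QS08} for proper operators apply directly, whereas your choice $K>\max\{L,-\lambda_1^+(F,\Omega),0\}$ does not by itself give properness --- you should either also take $K>\sup_\Omega\delta$, or justify each comparison step by applying convexity to the difference $w=u-v$ (so that $\tilde F(D^2w,Dw,w,x)\geq 0$) and then the maximum principle under $\lambda_1^+(\tilde F,\Omega)>0$.
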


\begin{proof}
The proof is based on monotone iteration method. See also \cite[Lemma~4.3]{QS08} for a similar argument.
Define the operator $\tilde{F}=F-\theta$ in such a way that $\lambda_1^+(\tilde{F},\Omega)>0$. We may 
choose $\theta$
large enough so that
$$\theta > \mathrm{Lip}(f(x, \cdot)\, \text{on}\; [\inf_{\Omega}\underline{u}, \sup_{\Omega}\bar{u}])
\quad \text{almost surely for}\; x\in \Omega,$$ 
and $\tilde{F}$ is proper i.e., decreasing in $u$.
 Also, note that $\tilde{F}$ satisfying (H1)-(H4). Now we define the monotone sequence.
Denote by $v_0=\underline{u}$, and for each $n\geq 0$, we define
		\[ \begin{cases} 
		\tilde{F}(D^2v_{n+1},Dv_{n+1},v_{n+1},x)=f(x,v_{n})-\theta v_{n} & \text{ in }\Omega\,,
		\\
		v_{n+1}=0 & \text{ on }\partial\Omega\,.
		\end{cases}
		\]
Existence of $v_{n+1}\in E_p$ follows from \cite[Theorem~3.4]{QS08}. Also, since $\tilde{F}$ is proper,
we can apply comparison principle \cite[Theorem~3.2]{QS08} to obtain $v_0\leq v_1\leq v_2\leq\cdots\leq \bar{u}$. It is then standard to show that $v_n\to u$ in $\cC(\bar{\Omega})$ for some $u\in E_p(\Omega)$ and 
$u$ is our required solution (see for instance, \cite[Lemma~4.3]{QS08} ). This completes the proof.
\end{proof}

Applying \cref{L3.3} we obtain the following.
\begin{theorem}\label{T3.2}
It holds that $\plamplus(F)\leq\lambda_1^+(F)$ and $\plamminus(F)\leq\lambda_1^-(F)$.
\end{theorem}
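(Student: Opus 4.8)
The plan is to prove the ``plus'' inequality $\plamplus(F)\leq\lambda_1^+(F)$; the ``minus'' inequality follows by applying the argument to the operator $G(M,p,u,x)=-F(-M,-p,-u,x)$, exactly as in the passage from \cref{L3.1} to \cref{L3.2}. So fix $\lambda>\lambda_1^+(F)$ — or, more precisely, fix any $\lambda$ with $\Psi^+(F,\RN,\lambda)=\emptyset$ in the sense defining $\lambda_1^+(F)$ — and the goal is to show there is \emph{no} bounded positive $\psi\in\Sobl^{2,N}(\RN)$ with $F(D^2\psi,D\psi,\psi,x)+\lambda\psi\geq 0$ in $\RN$; equivalently, every such $\psi$ forces $\lambda\leq\lambda_1^+(F)$, which gives $\plamplus(F)\leq\lambda_1^+(F)$ upon taking the infimum.

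First I would suppose, for contradiction, that such a bounded positive supersolution $\psi$ exists for some $\lambda>\lambda_1^+(F)$. On each ball $\sB_n$, consider the Dirichlet principal eigenvalue $\lambda_{1,n}^+=\lambda_1^+(F,\sB_n)$ with positive eigenfunction $\psi_{1,n}^+$ vanishing on $\partial\sB_n$, whose existence is \cite[Theorem~1.1]{QS08}. By \cref{L3.1} we have $\lambda_{1,n}^+\downarrow\lambda_1^+(F)$, so for $n$ large enough $\lambda_{1,n}^+<\lambda$. Now $\psi$ is a positive (hence, since $\psi$ is bounded and positive, strictly positive on $\bar\sB_n$) supersolution of $F(D^2u,Du,u,x)+\lambda u\geq 0$ in $\sB_n$, while $\psi_{1,n}^+$ is a positive solution of $F(D^2u,Du,u,x)+\lambda_{1,n}^+u=0$ in $\sB_n$ vanishing on the boundary. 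The key step is to compare these two functions inside $\sB_n$: scaling $\psi_{1,n}^+$ appropriately and using the convexity/positive homogeneity of $F$ together with the strong maximum principle \cite[Lemma~3.1]{QS08} and the comparison machinery of \cite[Theorem~3.2, Proposition~4.2]{QS08}, one shows that $(\lambda-\lambda_{1,n}^+)\int \psi_{1,n}^+\,(\text{something positive})\leq 0$, i.e. a sign contradiction — this is the standard argument that a bounded positive supersolution for a parameter $\lambda$ exceeding the Dirichlet principal eigenvalue of a large enough ball cannot exist. Concretely, the cleanest route is: pick $t>0$ so large that $t\psi>\psi_{1,n}^+$ somewhere and then decrease $t$ to the critical value $t^*$ at which $t^*\psi$ first touches $\psi_{1,n}^+$ from above; at the touching point one derives, via (H2) and \cref{T3.1}-type Harnack/strong maximum principle, that $t^*\psi\equiv\psi_{1,n}^+$, which is impossible because $\psi>0$ on $\partial\sB_n$ but $\psi_{1,n}^+=0$ there. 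Hence no such $\psi$ exists for $\lambda>\lambda_1^+(F)$.

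From this, $\plamplus(F)=\inf\{\lambda:\exists\,\text{bounded positive }\psi\text{ with }F(D^2\psi,D\psi,\psi,x)+\lambda\psi\geq 0\}\geq\lambda_1^+(F)$ would be the \emph{wrong} direction, so let me instead run the argument the way the quantities are set up: the point is that the set of admissible $\lambda$ in the definition of $\plamplus(F)$ is contained in $(-\infty,\lambda_1^+(F)]$ is false in general — rather, one shows directly that $\lambda_1^+(F)$ itself \emph{is} admissible for $\plamplus$. Indeed, by \cref{L3.1} there is a positive $\varphi^+\in\Sobl^{2,p}(\RN)$ with $F(D^2\varphi^+,D\varphi^+,\varphi^+,x)=-\lambda_1^+(F)\varphi^+$; this $\varphi^+$ is in particular a supersolution of $F(\cdot)+\lambda_1^+(F)\varphi^+\leq 0$, but for $\plamplus$ we need a \emph{bounded} positive \emph{subsolution} $F(\cdot)+\lambda\psi\geq 0$. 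The genuine mechanism is \cref{L3.3}: for $\lambda<\lambda_1^+(F)$ one has $\lambda_1^+(F+\lambda,\sB_n)>0$, so solving the truncated problems with a nonpositive forcing and using \cref{L3.3}, one produces bounded positive solutions on exhausting balls that, after normalization and the usual Harnack plus $\Sobl^{2,p}$ compactness argument (exactly as in Step 2 of \cref{L3.1}), pass to the limit to give a bounded positive $\psi$ on $\RN$ with $F(D^2\psi,D\psi,\psi,x)+\lambda\psi\geq 0$. Thus every $\lambda<\lambda_1^+(F)$ is admissible for $\plamplus(F)$, whence $\plamplus(F)\leq\lambda_1^+(F)$. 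The main obstacle I anticipate is the boundedness of the limiting $\psi$: the local Harnack/$\Sobl^{2,p}$ estimates only give local bounds, so one needs the construction in \cref{L3.3} (with a carefully chosen nonpositive right-hand side supported in annuli) to yield global $L^\infty$ control before passing to the limit — this is the one place where the unbounded-domain setting genuinely bites and where the proof departs from a routine compactness argument.
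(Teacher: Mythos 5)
There is a genuine gap, and it sits exactly where you flagged it. To get $\plamplus(F)\leq\lambda_1^+(F)$ you must exhibit, for each $\lambda$ strictly \emph{above} $\lambda_1^+(F)$, a bounded positive $\psi$ with $F(D^2\psi,D\psi,\psi,x)+\lambda\psi\geq 0$ (the set of admissible $\lambda$ in the definition of $\plamplus$ is upward closed, so this yields the inequality in the limit). Your corrected argument instead tries to show that every $\lambda<\lambda_1^+(F)$ is admissible. That claim is false in general: for $F=\Delta$ one has $\lambda_1^+(F)=0$, while $\psi\equiv 1$ gives $\pplamplus(\Delta)\geq 0$ and hence, by \cref{T3.5}, $\plamplus(\Delta)\geq 0$, so no $\lambda<0$ admits a bounded positive subsolution; moreover, were your claim true it would force $\plamplus(F)=-\infty$, which is absurd. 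The mechanism you invoke cannot deliver it either: the Step~2 construction of \cref{L3.1} normalizes at the origin and uses Harnack plus interior $\Sob^{2,p}$ estimates, which give only \emph{locally} uniform bounds, and the limit is a positive eigenfunction with eigenvalue $\lambda$, which for $\lambda<\lambda_1^+(F)$ is typically unbounded (think $\cosh(\sqrt{-\lambda}\,x_1)$ for the Laplacian). You acknowledge the boundedness obstacle but do not resolve it, and on that side of $\lambda_1^+(F)$ it is not resolvable. (Your opening paragraph, which argues nonexistence of bounded supersolutions for $\lambda>\lambda_1^+(F)$, addresses the wrong inequality, as you yourself note.)

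The paper's proof works on the other side of $\lambda_1^+(F)$ and obtains global boundedness from a \emph{semilinear} problem rather than the eigenvalue problem. Normalizing $\lambda_1^+(F)=0$ and fixing $\lambda>0$, it picks $k$ with $\lambda>\lambda^+_{1,k}>0$, normalizes the Dirichlet eigenfunction $\psi^+_k$ of $\sB_k$ to be small (of size $(\lambda-\lambda^+_{1,k})/(\lambda+\tilde\delta)$), and solves $F(D^2u,Du,u,x)=(\lambda+c^+(x))u^2-\lambda u$, $c(x)=F(0,0,1,x)$, on an exhausting sequence of balls via \cref{L3.3}, squeezed between the subsolution $\psi^+_k$ and the constant supersolution $\bar u\equiv 1$. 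The bound $0\leq u\leq 1$ survives the passage to the limit, Harnack gives $u>0$, and $F(D^2u,Du,u,x)+\lambda u=(\lambda+c^+)u^2\geq 0$, so every $\lambda>\lambda_1^+(F)$ is admissible and $\plamplus(F)\leq\lambda_1^+(F)$. This logistic-type device is the key idea missing from your proposal. A further minor point: reducing the ``minus'' inequality to the ``plus'' one via $G(M,p,u,x)=-F(-M,-p,-u,x)$ is not immediate, since $G$ is concave and the plus-case statement does not apply to it verbatim; the paper instead repeats the construction directly with the negative Dirichlet eigenfunction in its Step~2.
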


\begin{proof}
We divide the proof into two steps.

\noindent{\bf Step 1.} We show that $\plamplus(F)\leq\lambda_1^+(F)$. Replacing
$F$ by $F-\lambda_1^+(F)$ we may assume that $ \lambda_1^+(F)=0$.
Considering any $\lambda$ satisfying $\lambda>0$ we show that $\plamplus(F)\leq\lambda$. 
Recall from \cref{L3.1} that $\lambda^{+}_{1}(F, \sB_n)\searrow \lambda_1^+(F)$ as $n\to\infty$. Thus
we can find $k$ large enough satisfying $\lambda>\lambda^{+}_{1}(F, \sB_k)>\lambda_1^+(F)=0$.
Let $\psi_{k}^+\in E_p(\sB_k), p<\infty,$ satisfy
\[\begin{split}
F(D^2\psi_{k}^+,D\psi_{k}^+,\psi_{k}^+,x)&=\,-\lambda_{1,k}^+\psi_{k}^+\quad \text{in}\; \sB_k,
\\
\psi_{1}^+>0\; \text{in}\; \sB_k,\quad \psi_{k}^+ &=\,0\; \text{in}\; \partial \sB_k,
\end{split}
\]
where $\lambda^{+}_{1}(F, \sB_k)=\lambda_{1,k}^+$. 
Let $\tilde{\delta}=\sup_{\sB_k}\delta$ where $\delta$ is given by (H3).
Normalize $\psi_{k}^+$ so that 
$$
\norm{\psi_{k}^+}_{L^\infty(\sB_k)}=\min\left\{1, \frac{\lambda-\lambda_{1,k}^+}{\lambda+\tilde{\delta}}\right\}\,.
$$
Now we plan to find a bounded, positive solution of 
\begin{align}\label{ET3.2A}
F(D^2u,Du,u,x)=(\lambda+ c^+(x))u^2-\lambda u \quad \text{in}\; \RN\,,
\end{align}
where $c(x)=F(0,0,1,x)\in L^\infty_{\rm loc}(\RN)$.
This would imply $F(D^2u,Du,u,x)\geq -\lambda u$, and therefore, $\plamplus(F)\leq\lambda$. Thus to complete
the proof of Step 1 we only need to establish \eqref{ET3.2A}.

Let $\bar{u}=1$ and $\underline{u}=\psi^+_{k}$. Note that $\bar{u}$ is a supersolution in $\RN$ and
$\underline{u}$ is a subsolution in $\sB_k$.
Now fix any ball $\sB$ containing $\sB_k$. Since $0$ is a
subsolution, by \cref{L3.3}, we find $v\in E_p(\sB), p<\infty$, with $0\leq v\leq 1$ and satisfies
$$
F(D^2v,Dv,v,x)=(\lambda+\tilde{\delta})v^2-\lambda v \quad \text{in}\; \sB,\quad v=0\; \text{on}\; 
\partial\sB\,.
$$
The proof of \cref{L3.3} also reveals that $v\geq \psi^+_k$ in $\sB_k$. Now choosing a sequence of 
$\sB$ increasing to $\RN$, and the interior estimate \cite[Theorem~3.3]{QS08} we can find a
subsequence locally converging to a solution $u$ of \eqref{ET3.2A}. Positivity of $u$ follows
from \cref{T3.1}.

\noindent{\bf Step 2.} We next show that $\plamminus(F)\leq\lambda_1^-(F)$.
Replacing $F$ by $F-\lambda_1^-(F)$ we may assume that $ \lambda_1^-(F)=0$.
Considering any $\lambda$ satisfying $\lambda>0$ we show that $\plamminus(F)\leq\lambda$.
As done in Step 1, we can choose $k$ large enough so that $\lambda>\lambda^{-}_1(F, \sB_k)\df\lambda_{1,k}^-$
and there exists $\psi^{-}_k\in E_p(\sB_k)$ satisfying
\[\begin{split}
F(D^2\psi_{k}^-,D\psi_{k}^-,\psi_{k}^-,x)&=\,-\lambda_{1,k}^-\psi_{k}^+\quad \text{in}\; \sB_k,
\\
\psi_{k}^-<0\; \text{in}\; \sB_k,\quad \psi_{k}^- &=\,0\; \text{in}\; \partial \sB_k.
\end{split}
\]
Normalize $\psi_{k}^-$ so that 
$$\norm{\psi_{k}^-}_{L^\infty(\sB_k)}=\min\left\{1, \frac{\lambda-\lambda_{1,k}^-}{\lambda+\tilde{\delta}}\right\},
$$
where $\tilde\delta$ is same as in Step 1. Then 
$$F(D^2\psi_{k}^-,D\psi_{k}^-,\psi_{k}^-,x)\leq -(\lambda+ c^-(x))(\psi^-_k)^2-\lambda \psi^-_k
\quad \text{in}\; \sB_k.$$
Thus, using \cref{L3.3} and the arguments of Step 1, we obtain a negative, bounded solution 
$u\in\Sobl^{2,p}(\RN), p<\infty$, to
\begin{align*}
F(D^2u,Du,u,x)=-(\lambda+ c^-(x))u^2-\lambda u\leq -\lambda u.
\end{align*}
This of course, implies $\plamminus(F)\leq \lambda$. Hence the theorem.
\end{proof}

\cref{T2.2}(ii) will be proved using \cref{T2.4}. Thus we prove \cref{T2.4} first.
\begin{theorem}\label{T3.3}
Suppose that either \eqref{E2.3} and \eqref{E2.5} or \eqref{E2.4} and \eqref{E2.6} hold. 
Then $F$ satisfies $\beta^+$-MP in $\RN$ provided $\lambda_{\beta}^{\prime\prime,+}(F)>0$.
\end{theorem}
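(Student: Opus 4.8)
The plan is to carry out the Berestycki--Nirenberg--Varadhan refined maximum principle argument in the present unbounded, fully nonlinear setting; the one genuinely new ingredient is a growth barrier, adapted to \eqref{E2.5}--\eqref{E2.6}, that prevents the ``contact point'' from running off to infinity. For this barrier I would take $\Theta(x)\df(1+\abs{x}^2)^{m}$ with $m$ large (so in particular $2m>\sigma$) when \eqref{E2.3}--\eqref{E2.5} hold, and $\Theta(x)\df\exp\bigl(\mu\sqrt{1+\abs{x}^2}\bigr)$ with $\mu$ large when \eqref{E2.4}--\eqref{E2.6} hold; in both cases $\Theta$ is smooth, positive, convex outside a compact set, $\beta/\Theta\to0$ as $\abs{x}\to\infty$, and a direct computation together with the assumed growth of $\Lambda,\gamma,\delta$ gives a constant $C_0$ with
\begin{equation*}
F(D^2\Theta,D\Theta,\Theta,x)\,\le\,\cM^{+}_{\lambda,\Lambda}(x,D^2\Theta)+\gamma(x)\abs{D\Theta}+\delta(x)\Theta\,\le\,C_0\,\Theta\qquad\text{in }\RN .
\end{equation*}

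First I would reduce. Since $\lambda_{\beta}^{\prime\prime,+}(F)>0$, fix $\lambda\in\bigl(0,\lambda_{\beta}^{\prime\prime,+}(F)\bigr)$ and $\psi\in\Sobl^{2,N}(\RN)$ with $\psi\ge\beta$ and $F(D^2\psi,D\psi,\psi,x)+\lambda\psi\le0$. Given $u\in\Sobl^{2,N}(\RN)$ with $F(D^2u,Du,u,x)\ge0$ and $\sup_{\RN}u/\beta<\infty$, the bound $\psi\ge\beta>0$ forces $\kappa\df\sup_{\RN}u/\psi<\infty$; if $\kappa\le0$ then $u\le0$ and we are done, so assume $\kappa>0$ and put $v\df u-\kappa\psi$, so $v\le0$ and $\sup_{\RN}v/\psi=0$. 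Since (H1)--(H2) make $F$ subadditive in $(M,p,u)$,
\begin{equation*}
F(D^2v,Dv,v,x)\,\ge\,F(D^2u,Du,u,x)-\kappa\,F(D^2\psi,D\psi,\psi,x)\,\ge\,\kappa\lambda\psi\,\ge\,\kappa\lambda\beta\,>\,0\quad\text{in }\RN .
\end{equation*}

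Then I would distinguish whether the supremum is attained. If $\sup_{\RN}v/\psi=0$ is attained at some $x_0$, then $v(x_0)=0$ and, $v$ being nonpositive, $x_0$ is a global maximum of $v$; the displayed estimate and (H3) show that $z\df-v\ge0$ solves $\cM^{-}_{\lambda,\Lambda}(x,D^2z)-\gamma\abs{Dz}-\delta z\le-\kappa\lambda\psi\le0$ with $z(x_0)=0$, so the strong maximum principle \cite[Lemma~3.1]{QS08} gives $z\equiv0$, whence $0=F(0,0,0,x)=F(D^2v,Dv,v,x)\ge\kappa\lambda\psi>0$, a contradiction. Hence the supremum is not attained, $v<0$ throughout $\RN$, and there is a sequence with $\abs{x_n}\to\infty$ and $v(x_n)/\psi(x_n)\to0$; then $u(x_n)/\psi(x_n)\to\kappa$, which with $u\le(\sup_{\RN}u/\beta)\,\beta$ and $\psi\ge\beta$ forces $\psi(x_n)\asymp\beta(x_n)$, hence also $z(x_n)/\beta(x_n)\to0$.

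Ruling out this last alternative is where I expect the main difficulty. Here $z=-v>0$ is a strictly positive supersolution,
\begin{equation*}
\cM^{-}_{\lambda,\Lambda}(x,D^2z)-\gamma(x)\abs{Dz}-\delta(x)z\,\le\,-\kappa\lambda\psi\,\le\,-\kappa\lambda\beta\,<\,0\quad\text{in }\RN ,
\end{equation*}
whose source is bounded away from $0$ by a multiple of $\beta$. The idea is to deduce that $z\ge c\,\beta$ outside some ball $\sB_{R_0}$, for a constant $c>0$, which contradicts $z(x_n)/\beta(x_n)\to0$. For this I would use $\Theta$ to build a positive subsolution $\underline z$ of the same Pucci operator, with source $-\kappa\lambda\beta$, on the exterior domain $\sB^c_{R_0}$ --- working on balls of radius of order one when the coefficients are bounded (\eqref{E2.6}) and on balls of radius of order $\abs{x}$ when they grow as in \eqref{E2.5}, the two scalings being precisely what the respective hypotheses accommodate --- arrange $\underline z\le z$ on $\partial\sB_{R_0}$ (possible since $z$ has a positive lower bound on $\bar\sB_{R_0}$ while $\underline z$ may be normalized small there), and then conclude $\underline z\le z$ in $\sB^c_{R_0}$ from the comparison principle \cite[Theorem~3.2]{QS08} on large annuli together with a Phragm\'en--Lindel\"of argument whose admissible growth is furnished by $\Theta$ and by $\sup_{\RN}u/\beta<\infty$. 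Since $\underline z\ge c\,\beta$ on $\sB^c_{R_0}$, this is the required contradiction, so $\kappa\le0$, i.e.\ $u\le0$ in $\RN$, which is the $\beta^{+}$-MP. (The $(-\beta)^{-}$-MP needed for the full \cref{T2.4} then follows by applying the above to $G(M,p,u,x)\df-F(-M,-p,-u,x)$, which again satisfies (H1)--(H4).) The delicate points are making $\Theta$ and $\underline z$ interact correctly with the \emph{variable} ellipticity bounds $\lambda(x),\Lambda(x)$ and the unbounded drift, and setting up the exterior comparison at the correct growth rate.
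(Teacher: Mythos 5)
Your reduction and your treatment of the attained case are fine: the subadditivity estimate $F(D^2v,Dv,v,x)\ge F(D^2u,Du,u,x)-\kappa F(D^2\psi,D\psi,\psi,x)\ge\kappa\lambda\psi$ for $v=u-\kappa\psi$ is correct under (H1)--(H3), and if $\sup u/\psi$ is attained the strong maximum principle \cite[Lemma~3.1]{QS08} does give the contradiction. The genuine gap is the non-attained case, which is the heart of the theorem, and there your argument is only a hope, not a proof. You need $z=\kappa\psi-u$ to satisfy $z\ge c\beta$ near infinity in order to contradict $z(x_n)/\beta(x_n)\to0$, but $\beta$ is an arbitrary positive function (no regularity, no lower bound, possibly unbounded under \eqref{E2.3} or \eqref{E2.4}), so a constant lower bound on $z$ is useless when $\beta(x_n)\to\infty$, and the proposed subsolution $\underline z$ with $\underline z\ge c\beta$ and source $-\kappa\lambda\beta$ is never constructed --- note that $\underline z\ge c\beta$ forces the zeroth-order term $-\delta\underline z$ to be as negative as $-c\,\delta\beta$, so the requirement $\cM^-_{\lambda,\Lambda}(x,D^2\underline z)-\gamma\abs{D\underline z}-\delta\underline z\ge-\kappa\lambda\beta$ is in tension with itself for rough or growing $\beta$. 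Moreover the Phragm\'en--Lindel\"of step has the wrong sign: your barrier satisfies $F(D^2\Theta,D\Theta,\Theta,x)\le C_0\Theta$ with $C_0>0$, and since there is no spectral gap to absorb $C_0$ (only $\sup\delta<\infty$), $\Theta$ is not an admissible growth function for an exterior comparison of the type you invoke. So the step ``$\underline z\le z$ on $\sB^c_{R_0}$, hence $z\ge c\beta$'' is unjustified, and it is doubtful it can be pushed through at this level of generality.

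The paper avoids this difficulty entirely by a different device (following Berestycki--Rossi): instead of working with $\kappa=\sup u/\psi$, it perturbs the supersolution, setting $\psi_n=\psi+\tfrac1n\chi$ with $\chi$ exactly your $\Theta$, so that $\beta/\chi\to0$ forces the supremum $\kappa_n=\sup_{\RN}u/\psi_n$ to be \emph{attained} at some $x_n$ for every $n$. The price is the error term $\tfrac1n F(D^2\chi,D\chi,\chi,x)\le\tfrac{C}{n}\chi$, and the key quantitative observation is the elementary inequality
\begin{equation*}
\frac{\chi(x_n)}{n}\,\le\,2\Bigl(\frac{1}{\kappa_n}-\frac{1}{\kappa_{2n}}\Bigr)\psi(x_n)\,,
\end{equation*}
which, since $\kappa_n$ is monotone and convergent, makes the perturbation negligible near $x_n$ for large $n$; convexity then gives $F(D^2\psi_m,D\psi_m,\psi_m,x)<0$ near $x_m$, and the strong maximum principle applied to $w=\kappa_m\psi_m-u\ge0$, which vanishes at $x_m$, yields the contradiction. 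In other words, the bound $F(\chi)\le C\chi$ with a positive constant is harmless in the paper's scheme because it is multiplied by $\bigl(\tfrac1{\kappa_n}-\tfrac1{\kappa_{2n}}\bigr)\to0$, whereas in your scheme the same estimate is asked to play the role of a Phragm\'en--Lindel\"of barrier, which it cannot. If you want to salvage your route you would have to prove the exterior lower bound $z\gtrsim\beta$ as a separate quantitative lemma, and I do not see how to do that under \eqref{E2.5}--\eqref{E2.6} alone; the perturbation trick is the intended mechanism.
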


\begin{proof}
Let $u\in\Sobl^{2, N}(\RN)$ be a  function satisfying 
$$
F(D^2u,Du,u,x)\,\geq\, 0\quad  \text{in}\; \RN,\quad \text{ and } 
\quad \sup_{\RN}\ \frac{u}{\beta}\,<\,\infty\,.
$$
Also, since $\lambda_{\beta}^{\prime\prime,+}(F)>0$, there exists $\lambda>0$ 
and $\psi\in\Sobl^{2, N}(\RN)$ with the property that $\psi\geq\beta$ and
$$ F(D^2\psi,D\psi,\psi,x) + \lambda\psi\,\leq\, 0\quad  \text{in }\; \RN\,.$$ 
Multiplying $\psi$ with a suitable constant we may assume that $\psi\geq u$. 

For this proof we follow the idea of \cite[Theorem~4.2]{BR15}.
Choose a smooth positive function $\chi:\RN\rightarrow\mathbb{R}$ such that, for $|x|>1$,
\[
\chi(x)=\left\{
\begin{array}{lll}
|x|^\sigma & \text{if $\beta$ satisfies \eqref{E2.3}}\,,
\\
\exp(\sigma|x|) & \text{if $\beta$ satisfies \eqref{E2.4}}\,.
\end{array}
\right.
\]
Using (H3) and an easy computation we obtain for $x\in\sB^c_1$
\[
F(D^2\chi,D\chi,\chi,x)\leq\left\{
\begin{array}{lll}
\left[(\sigma^2+N\sigma-2\sigma)\frac{\Lambda(x)}{|x|^2}+\sigma\frac{\gamma(x)}{|x|}+\delta(x)\right]\chi
& \text{if $\beta$ satisfies \eqref{E2.3}}\,,
\\
\left[\sigma\bigg(\sigma+\frac{N-1}{|x|}\bigg)\Lambda(x)+\sigma\gamma(x)+\delta(x)\right]\chi
& \text{if $\beta$ satisfies \eqref{E2.4}}\,.
\end{array}
\right.
\]
Hence for both the cases, using \eqref{E2.5} and \eqref{E2.6} accordingly on $\overline{\sB}_1^c$, there exists a positive constant $C$ such that 
\begin{align}\label{ET3.3A}
F(D^2\chi,D\chi,\chi,x)\,\leq\, C\chi.
\end{align}
Modifying $C$, if required, we can assume \eqref{ET3.3A} to hold in $\RN$.
Now set $\psi_n=\psi + \frac{1}{n}\chi$ and define $\kappa_n=\sup_{\RN}\frac{u}{\psi_n}$. If $\kappa_n\leq 0$
then there is nothing to prove. Thus we assume $\kappa_n>0$ to reach a contradiction. Since $\psi\geq u$ it
follows that $\kappa_n\leq 1$ and $\kappa_n\leq\kappa_{n+1}$ for all $n\geq 1$. Moreover, by \eqref{E2.3} and \eqref{E2.4},
$$
\limsup_{|x|\rightarrow\infty}\,\frac{u(x)}{\psi_n(x)}\,\leq\, n\ \sup_{\RN}\frac{u}{\beta}\ 
\limsup_{|x|\rightarrow\infty}\frac{\beta(x)}{\chi (x)}\;=\,0\,.
$$ 
Hence there exist $x_n\in\RN$ such that $\kappa_n\,=\,\frac{u(x_n)}{\psi_n(x_n)}$.

Let us now estimate the term $\frac{\chi(x_n)}{n}$. Note that
$$
\frac{1}{\kappa_{2n}}\,\leq\,\frac{\psi_{2n}(x_n)}{u(x_n)}\,=\,\frac{1}{\kappa_n}-\frac{\chi(x_n)}{2n\,u(x_n)}\,,
$$
which implies 
\begin{equation*}
\frac{\chi(x_n)}{n}\leq 2\bigg(\frac{1}{\kappa_{n}}-\frac{1}{\kappa_{2n}}\bigg)u(x_n)\leq2\bigg(\frac{1}{\kappa_{n}}-\frac{1}{\kappa_{2n}}\bigg)\psi(x_n)\,.
\end{equation*}
Hence for each natural number $n$ there exist a small positive $\eta_n$ such that 
\begin{equation}\label{ET3.3B}
\frac{\chi(x)}{n}\leq \bigg(\frac{1}{\kappa_{n}}-\frac{1}{\kappa_{2n}}\bigg)\psi(x)
\quad \text{in}\; \sB_{\eta_n}(x_n).
\end{equation}
On the other hand, using convexity of $F$ with \eqref{ET3.3A} and \eqref{ET3.3B}, we get
\begin{align*}
F(D^2\psi_n,D\psi_n,\psi_n,x) & \leq  F(D^2\psi,D\psi,\psi,x)+\frac{1}{n}F(D^2\chi,D\chi,\chi,x)
\\
&\leq \left[-\lambda +  C \bigg(\frac{1}{\kappa_{n}}-\frac{1}{\kappa_{2n}}\bigg)\right]\psi(x)\,,
\end{align*}
in $\sB_{\eta_n}(x_n)$. Since $\{\kappa_{n}\}$ is a convergent sequence,
we can choose $m$ large enough so that 
\begin{align}\label{ET3.3C}
F(D^2\psi_m,D\psi_m,\psi_m,x)\,<\,0\quad \text{ in}\; \sB_{\eta_m}(x_m)\,.
\end{align}
Now note that $w=\kappa_m\psi_m-u$ is non-negative and by (H3), there exist positive $a, b$ such that
in $\sB_{\eta_m}(x_m)$ we have
\begin{align*}
\mathcal{M}_{\lambda,\Lambda}^-(x, D^2w)-a|Dw|-b w \,\leq\, \kappa_m F(D^2\psi_m,D\psi_m,\psi_m,x)-F(D^2u,Du,u,x)\,<\,0\,.
\end{align*}
By the strong maximum principle \cite[Lemma~3.1]{QS08} we then obtain $w\equiv 0$ in $\sB_{\eta_m}(x_m)$.
But this contradicts \eqref{ET3.3C} as
$$0\leq F(D^2u,Du,u,x)\,=\,\kappa_m F(D^2\psi_m,D\psi_m,\psi_m,x)<0\quad \text{ in}\; \sB_{\eta_m}(x_m)\,.$$
Therefore, $\kappa_n\leq 0$ for large $n$ and hence $u\leq 0$.
\end{proof}

In the same spirit of \cref{T3.3} we can also prove $\beta^-$-MP.
\begin{theorem}\label{T3.4}
Suppose that either \eqref{E2.3} and \eqref{E2.5} or \eqref{E2.4} and \eqref{E2.6} hold for the function $\beta$. 
Then $F$ satisfies $(-\beta)^-$-MP in $\RN$ provided $\lambda_{\beta}^{\prime\prime,-}(F)>0$.
\end{theorem}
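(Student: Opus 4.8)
The plan is to deduce \cref{T3.4} from \cref{T3.3} via the change of operator $G(M,p,u,x)\df-F(-M,-p,-u,x)$, the same device already used in the proof of \cref{L3.2}. First I would record that $G$ satisfies (H1), (H3) and (H4): positive $1$-homogeneity and continuity of $(M,x)\mapsto G(M,0,0,x)=-F(-M,0,0,x)$ are immediate, while (H3) for $G$ (with the same $\gamma,\delta$) follows from (H3) for $F$ applied to the negated arguments, using the identities $\cM^-_{\lambda,\Lambda}(x,A)=-\cM^+_{\lambda,\Lambda}(x,-A)$. Note that $G$ is \emph{concave}, not convex, so (H2) is lost; this is the one genuinely delicate point and is addressed at the end.

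Next I would translate the data. If $u\in\Sobl^{2,N}(\RN)$ satisfies $F(D^2u,Du,u,x)\le0$ in $\RN$ and $\sup_{\RN}u/(-\beta)<\infty$, then $v\df-u$ satisfies $G(D^2v,Dv,v,x)\ge0$ in $\RN$ and $\sup_{\RN}v/\beta<\infty$, and $(-\beta)^-$-MP for $F$ is exactly the assertion $v\le0$. Similarly, $\lambda_{\beta}^{\prime\prime,-}(F)>0$ produces $\lambda>0$ and $\psi\in\Sobl^{2,N}(\RN)$ with $\psi\le-\beta$ and $F(D^2\psi,D\psi,\psi,x)+\lambda\psi\ge0$ in $\RN$; then $\phi\df-\psi$ satisfies $\phi\ge\beta>0$ and $G(D^2\phi,D\phi,\phi,x)+\lambda\phi\le0$ in $\RN$, so $\lambda_{\beta}^{\prime\prime,+}(G)>0$. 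Multiplying $\phi$ by a large constant (permitted by (H1)) we may assume $\phi\ge v$. At this stage the configuration $(G,v,\phi)$ is precisely that of the proof of \cref{T3.3}, and I would run that argument verbatim: build the barrier $\chi$ from \eqref{E2.3} or \eqref{E2.4}, set $\phi_n\df\phi+\tfrac1n\chi>0$ and $\kappa_n\df\sup_{\RN}v/\phi_n$ (assuming $\kappa_n>0$ for contradiction, whence $\kappa_n\in(0,1]$), observe $\limsup_{|x|\to\infty}v/\phi_n=0$ so the supremum is attained at some $x_n$, estimate $\chi(x_n)/n$ as in \eqref{ET3.3B}, and conclude by applying the strong maximum principle \cite[Lemma~3.1]{QS08} to $w\df\kappa_m\phi_m-v\ge0$ to contradict the analogue of \eqref{ET3.3C}.

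The main (and only) obstacle is that the proof of \cref{T3.3} invokes convexity of $F$ once, namely in the step $F(D^2\psi_n,D\psi_n,\psi_n,x)\le F(D^2\psi,D\psi,\psi,x)+\tfrac1nF(D^2\chi,D\chi,\chi,x)$, which fails in general for the concave $G$. I would bypass this using only the structure condition: since $\phi_n-\phi=\tfrac1n\chi$, (H3) gives $G(D^2\phi_n,D\phi_n,\phi_n,x)\le G(D^2\phi,D\phi,\phi,x)+\tfrac1n\bigl[\cM^+_{\lambda,\Lambda}(x,D^2\chi)+\gamma|D\chi|+\delta\chi\bigr]$, and the bracketed quantity is bounded by $C\chi$ by the same computation that produced \eqref{ET3.3A}. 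Substituting this estimate wherever the convexity inequality was used, the proof of \cref{T3.3} transcribes without further change to $G$, yielding $v\le0$, i.e.\ $u\ge0$; hence $F$ satisfies $(-\beta)^-$-MP. (The same remark shows, incidentally, that \cref{T3.3} itself only requires (H1), (H3) and (H4).)
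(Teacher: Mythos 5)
Your proposal is correct and follows essentially the same route as the paper: the paper proves \cref{T3.4} by rerunning the barrier argument of \cref{T3.3} directly on $F$ with the signs reversed, using convexity only through the inequality $F(D^2\psi_n,D\psi_n,\psi_n,x)\geq F(D^2\psi,D\psi,\psi,x)-\tfrac1n F(D^2\chi,D\chi,\chi,x)$, whereas you transcribe the identical argument through $G(M,p,u,x)=-F(-M,-p,-u,x)$ and replace that single convexity step by the structure condition (H3), which is a valid substitute since the resulting Pucci bound is exactly the quantity estimated in \eqref{ET3.3A}. Your side remark that the argument (hence \cref{T3.3} and \cref{T3.4}) really only uses (H1), (H3) and (H4) is also sound.
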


\begin{proof}
As done in \cref{T3.3}, we choose $\lambda\in (0, \lambda_{\beta}^{\prime\prime,-}(F))$ and 
$\psi\in \Sobl^{2, N}(\RN)$ satisfying $\psi\leq-\beta$ and
$$ F(D^2\psi,D\psi,\psi,x) + \lambda\psi\,\leq\, 0\quad  \text{in }\; \RN\,.$$
Let $u\in\Sobl^{2, N}(\RN)$ be a function satisfying 
$$
F(D^2u,Du,u,x)\,\leq\, 0\quad  \text{in}\; \RN,\quad \text{ and } 
\quad \sup_{\RN}\ \frac{u}{(-\beta)}\,<\,\infty\,.
$$
We need to show that $u\geq 0$.
To the contrary, we suppose that $u$ is negative somewhere in $\RN$.
Multiplying $\psi$ with a suitable positive constant we may assume $\psi\leq u$. Consider
the function $\chi$ from \cref{T3.3} and note that \eqref{ET3.3A} holds. 
Set $\psi_n(x)=\psi(x)-\frac{1}{n}\chi(x)$ and $\kappa_n\df\sup_{\RN} \frac{u}{\psi_n}$. It can be easily 
checked that $(\kappa_n)_{n\in\mathbb{N}}$ is positive, increasing and bounded by $1$. Furthermore,
$\kappa_n=\frac{u(x_n)}{\psi_n(x_n)}$ for some $x_n\in\RN$. Then repeating a similar calculation we
find that
for each natural number $n$ there exist a small positive $\eta_n$ satisfying
$$
-\frac{\chi(x)}{n}\geq \bigg(\frac{1}{\kappa_{n}}-\frac{1}{\kappa_{2n}}\bigg)\psi(x)
\quad \text{in}\; \sB_{\eta_n}(x_n)\,.
$$
Then using convexity, \eqref{ET3.3A} and above estimate,  we obtain 
\begin{align*}
F(D^2\psi_n,D\psi_n,\psi_n,x) 
& \geq  F(D^2\psi,D\psi,\psi,x)-\frac{1}{n}F(D^2\chi,D\chi,\chi,x) 
\\ 
&\geq   \left[-\lambda \psi(x)- C\frac{\chi(x)}{n}\right]
\\
&\geq  \left[-\lambda +  C \big(\frac{1}{\kappa_{n}}-\frac{1}{\kappa_{2n}}\big)\right]\psi(x)\,, 
\end{align*}
in $\sB_{\eta_n}(x_n)$. As $\psi(x)$ is negative and $\{\kappa_{n}\}$ is convergent, we can choose $m$ large enough such that 
\begin{align}\label{ET3.4A}
F(D^2\psi_m,D\psi_m,\psi_m,x)\,>\,0\quad \text{in}\; \sB_{\eta_m}(x_m).
\end{align}
Note that $w\df\kappa_m\psi_n-u$ is a non-positive function vanishing at $x_m$. Repeating the
arguments of \cref{T3.3} we find positive constants $a_1, b_1$ satisfying
$$\mathcal{M}_{\lambda,\Lambda}^+(x, D^2 w)+ a_1|Dw|-b_1w\,\geq\, 0\,,$$
in $\sB_{\eta_m}(x_m)$. This of course, implies $w\equiv 0$ in $\sB_{\eta_m}(x_m)$ which is a contradiction
to \eqref{ET3.4A}. Thus it must hold that $u\geq 0$.
\end{proof}

\begin{proof}[Proof of \cref{T2.4}]
The proof follows by combining \cref{T3.3,T3.4}.
\end{proof}

Now we prove \cref{T2.5}.
\begin{proof}[Proof of \cref{T2.5}]
First we consider (i). To the contrary, suppose that $\lambda_{\beta}^{\prime,+}(F)<0$.
Then there exists $\lambda<0$ such that $\lambda_{\beta}^{\prime, +}(F)<\lambda<0$ and there exists
$\psi\in\Sobl^{2, N}(\RN)$ satisfying
$$
0<\psi\leq \beta,\quad  F(D^2\psi,D\psi,\psi,x) + \lambda\psi\,\geq\,0\,.
$$
This of course, implies $F(D^2\psi,D\psi,\psi,x)\geq-\lambda\psi\,>\, 0$
and $\sup\frac{\psi}{\beta}\leq 1$. This clearly violates $\beta^+$-MP.

Next we consider (ii). Again, we suppose that $\lambda_{\beta}^{\prime,-}(F)<0$. 
Then there exists $\lambda<0$ such that $\lambda_{\beta}^{\prime,-}(F)<\lambda<0$ and
 there exists $\psi\in\Sobl^{2, N}(\RN)$ satisfying 
$$
0>\psi\geq -\beta,\quad  F(D^2\psi,D\psi,\psi,x)+\lambda\psi\,\leq\,0\,.$$
This gives $F(D^2\psi,D\psi,\psi,x)\leq-\lambda\psi< 0$ and $\sup\frac{\psi}{(-\beta)}\leq 1$. This clearly violates $(-\beta)^-$-MP.
\end{proof}

Now we can prove \cref{T2.2}(ii).
\begin{theorem}\label{T3.5}
Assume that either \eqref{E2.5} or \eqref{E2.6} holds. Then we have 
$$\pplamplus(F)\,\leq\, \plamplus(F), \quad \text{and}\quad \pplamminus(F)\,\leq\,\plamminus(F)\,.$$
\end{theorem}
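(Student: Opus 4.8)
The plan is to derive both inequalities from the maximum principles \cref{T3.3,T3.4} applied to a shifted operator, arguing by contradiction. Consider first $\pplamplus(F)\le\plamplus(F)$, and suppose to the contrary that there is a $\lambda$ with $\plamplus(F)<\lambda<\pplamplus(F)$. Put $G\df F+\lambda$, i.e. $G(M,p,u,x)=F(M,p,u,x)+\lambda u$. Then $G$ again satisfies (H1)--(H4): convexity and positive $1$-homogeneity are clearly preserved, and (H3) holds for $G$ with the same $\gamma,\Lambda$ and with $\delta$ replaced by $\delta+\abs{\lambda}$; in particular $G$ satisfies whichever of \eqref{E2.5}, \eqref{E2.6} is assumed for $F$, since those conditions only involve $\gamma,\delta,\Lambda$. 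Directly from the definition one has $\pplamplus(G)=\pplamplus(F)-\lambda>0$, and by positive homogeneity (H1) this equals the $\beta\equiv1$ instance $\lambda_{1}^{\prime\prime,+}(G)$ of the $\beta$-dependent quantity appearing in \cref{T3.3} (if $\inf_{\RN}\psi>0$, rescale $\psi$ so that $\psi\ge1$). Since the constant function $\beta\equiv1$ satisfies both \eqref{E2.3} and \eqref{E2.4}, \cref{T3.3} yields that $G$ satisfies the $+$MP in $\RN$.

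Now I would exhibit a function that violates this $+$MP. As $\lambda>\plamplus(F)$, the definition of the infimum provides $\mu<\lambda$ and $\varphi\in\Sobl^{2,N}(\RN)\cap L^{\infty}(\RN)$ with $\varphi>0$ and $F(D^2\varphi,D\varphi,\varphi,x)+\mu\varphi\ge0$ in $\RN$. Using $\lambda>\mu$ and $\varphi>0$,
\begin{equation*}
G(D^2\varphi,D\varphi,\varphi,x)=F(D^2\varphi,D\varphi,\varphi,x)+\lambda\varphi\ge F(D^2\varphi,D\varphi,\varphi,x)+\mu\varphi\ge0\quad\text{in }\RN,
\end{equation*}
while $\sup_{\RN}\varphi<\infty$ because $\varphi\in L^{\infty}(\RN)$. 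The $+$MP for $G$ then forces $\varphi\le0$, contradicting $\varphi>0$; hence $\pplamplus(F)\le\plamplus(F)$. The inequality $\pplamminus(F)\le\plamminus(F)$ is obtained by the symmetric argument: assuming $\plamminus(F)<\lambda<\pplamminus(F)$, one has $\pplamminus(F+\lambda)=\pplamminus(F)-\lambda>0$, so \cref{T3.4} (again with $\beta\equiv1$) gives that $F+\lambda$ satisfies the $-$MP; and $\lambda>\plamminus(F)$ provides $\mu<\lambda$ and a bounded $\varphi<0$ with $F(D^2\varphi,D\varphi,\varphi,x)+\mu\varphi\le0$, so that $(F+\lambda)(D^2\varphi,D\varphi,\varphi,x)\le F(D^2\varphi,D\varphi,\varphi,x)+\mu\varphi\le0$, contradicting the $-$MP.

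I do not anticipate a genuine obstacle once \cref{T2.4} (equivalently \cref{T3.3,T3.4}) is available; the proof is essentially a translation trick. The two places that call for some care are: (a) verifying that the shifted operator $F+\lambda$ still obeys the structural hypotheses (H1)--(H4) and the growth conditions \eqref{E2.5}/\eqref{E2.6}, so that the maximum-principle theorems legitimately apply to it; and (b) the identification, via positive homogeneity, of $\pplamplus$ and $\pplamminus$ (defined through $\inf_{\RN}\psi>0$, resp.\ $\sup_{\RN}\psi<0$) with the $\beta\equiv1$ instances of $\lambda_{\beta}^{\prime\prime,\pm}$ occurring in \cref{T3.3,T3.4}. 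The remaining steps---computing $\pplamplus(F+\lambda)=\pplamplus(F)-\lambda$ and checking that the bounded positive (resp.\ negative) test function for $\plamplus$ (resp.\ $\plamminus$) is a subsolution of the shifted operator---are routine.
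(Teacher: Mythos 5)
Your argument is correct and follows essentially the same route as the paper: assume $\plamplus(F)<\lambda<\pplamplus(F)$, note $\pplamplus(F+\lambda)=\pplamplus(F)-\lambda>0$, invoke \cref{T3.3} (with $\beta\equiv1$) to get the $+$MP for $F+\lambda$, and contradict it with the bounded positive test function coming from the definition of $\plamplus$, with the symmetric argument via \cref{T3.4} for the minus case. Your extra care about the shifted operator satisfying (H1)--(H4) and \eqref{E2.5}/\eqref{E2.6}, the rescaling to identify $\pplamplus$ with the $\beta\equiv1$ quantity, and the upward-closedness step ($\mu<\lambda$) only makes explicit what the paper leaves implicit.
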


\begin{proof}
Let us first show that $\pplamplus(F)\,\leq\, \plamplus(F)$. To the contrary,
suppose that there exists $\lambda$ with $\lambda<\pplamplus(F)$ and $\plamplus(F)<\lambda$. Then there
exists  positive $\psi\in\Sobl^{2,N}(\RN)\cap L^\infty(\RN)$ such that $F(D^2\psi,D\psi,\psi,x)+\lambda\psi\geq\,0$. Also, note that $\pplamplus(F+\lambda)=\pplamplus-\lambda>0$. 
By \cref{T3.3}, the operator $F+\lambda$ satisfies $+$MP. Therefore, $\psi\leq 0$ which contradicts the
fact $\psi>0$. Hence we must have $\pplamplus(F)\,\leq\, \plamplus(F)$.

We prove the second claim. To the contrary,
suppose that there exists $\lambda$ with $\lambda<\pplamminus(F)$ and $\plamminus(F)<\lambda$. 
Then there exists a negative function $\psi\in\Sobl^{2,N}(\RN)\cap L^\infty(\RN)$ such that  $F(D^2\psi,D\psi,\psi,x)+\lambda\psi\leq\,0$. Also, we have $\pplamminus(F+\lambda)=\pplamminus(F)-\lambda>0$, and therefore,
the operator $F+\lambda$ satisfies $-$MP. This gives $\psi\geq 0$ which contradicts the fact $\psi<0$.
Hence we must have $\pplamminus(F)\,\leq\,\plamminus(F)$.
\end{proof}

\begin{proof}[Proof of \cref{T2.2}]
The proof follows by combining \cref{T3.2,T3.5}.
\end{proof}

Our next result should be compared with \cite[Theorem~7.6]{BR15}. Recall that for a smooth domain
$\Omega$
\begin{align*}
\pplamplus(F,\Omega) &=\sup\{\lambda\;:\;\exists\;\psi\in \Sobl^{2, N}(\Omega),\; \inf_{\Omega}\psi>0\; 
\text{and}\;	F(D^2\psi,D\psi,\psi,x) + \lambda\psi\leq 0\; \text{ in }\; \Omega\}\,,
\\
\pplamminus(F,\Omega)&=\sup\{\lambda\;:\;\exists\;\psi\in \Sobl^{2, N}(\Omega),\; \sup_{\Omega}\psi<0\; 
\text{and}\;	F(D^2\psi,D\psi,\psi,x) + \lambda\psi\geq 0\; \text{ in }\; \Omega\}\,.
	\end{align*}
\begin{theorem}\label{T3.6}
It holds that
$$
\pplamplus(F)\,=\,\min\{\lambda_{1}^{+}(F),\lim_{r\rightarrow\infty}\pplamplus(F,
\bar{\sB}_r^c)\}\,.
$$
\end{theorem}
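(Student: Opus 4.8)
\emph{The easy inequality.} If $\psi$ realises a given $\lambda$ in the definition of $\pplamplus(F)$, then $\inf_{\RN}\psi>0$ forces $\psi>0$ in $\RN$, so $\psi\in\Psi^{+}(F,\RN,\lambda)$ and $\lambda\le\lambda_1^{+}(F)$; likewise the restriction of $\psi$ to $\bar{\sB}_r^c$ shows $\lambda\le\pplamplus(F,\bar{\sB}_r^c)$ for every $r$. Since $r\mapsto\pplamplus(F,\bar{\sB}_r^c)$ is nondecreasing (shrinking the domain only enlarges the admissible set of $\lambda$'s), the limit exists and $\pplamplus(F)\le\min\{\lambda_1^{+}(F),\lim_{r\to\infty}\pplamplus(F,\bar{\sB}_r^c)\}$.

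\emph{Reduction of the reverse inequality.} It suffices to prove that for every $n$,
\[
\pplamplus(F)\;\ge\;\min\bigl\{\lambda_1^{+}(F,\sB_{n+1}),\ \pplamplus(F,\bar{\sB}_n^c)\bigr\},
\]
since letting $n\to\infty$ and using $\lambda_1^{+}(F,\sB_{n+1})\searrow\lambda_1^{+}(F)$ from \cref{L3.1} together with the monotonicity of $\pplamplus(F,\bar{\sB}_n^c)$ gives the claim. Fix $n$ and fix $\lambda$ strictly below the right–hand side. For bounded domains one has $\pplamplus(F,\Omega)=\lambda_1^{+}(F,\Omega)$: for $\lambda<\lambda_1^{+}(F,\Omega)$ one solves $F(D^2v,Dv,v,x)+\lambda v=-1$ in $\Omega$ with a small positive constant boundary datum, and the resulting $v\in E_p(\bar\Omega)$ has $\inf_\Omega v>0$. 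Hence we may choose a positive $\psi_1\in E_p(\bar{\sB}_{n+1})$ with $\inf_{\sB_{n+1}}\psi_1>0$ and $F(D^2\psi_1,D\psi_1,\psi_1,x)+\lambda\psi_1\le 0$ in $\sB_{n+1}$, and $\psi_2\in\Sobl^{2,N}(\bar{\sB}_n^c)$ with $\inf_{\bar{\sB}_n^c}\psi_2>0$ and $F(D^2\psi_2,D\psi_2,\psi_2,x)+\lambda\psi_2\le 0$ in $\bar{\sB}_n^c$; note that $\psi_2$ is continuous near every sphere $\{|x|=\rho\}$, $\rho>n$.

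\emph{Gluing.} Because $F$ (hence $F+\lambda$) is positively $1$-homogeneous, every positive scalar multiple of a supersolution is a supersolution, and by convexity, $F(M_1+M_2,\dots)\le F(M_1,\dots)+F(M_2,\dots)$, sums of supersolutions are supersolutions on the common domain; moreover on the overlap $\{n<|x|<n+1\}$ the minimum of two supersolutions is again one (as an $L^{N}$-viscosity, hence admissible by the convention following the definitions). The plan is to pick a scaling $s>0$ with
\[
s\psi_1\le\psi_2\ \text{near}\ \{|x|=n\}\qquad\text{and}\qquad \psi_2\le s\psi_1\ \text{near}\ \{|x|=n+1\},
\]
and then set $w=s\psi_1$ on $\bar{\sB}_n$, $w=\min\{s\psi_1,\psi_2\}$ on $\{n<|x|<n+1\}$, and $w=\psi_2$ on $\bar{\sB}_{n+1}^c$; the two inequalities make the three pieces glue into a positive viscosity supersolution of $F(D^2w,Dw,w,x)+\lambda w\le 0$ on $\RN$ with $\inf_{\RN}w\ge\min\{s\inf\psi_1,\inf\psi_2\}>0$, whence $\lambda\le\pplamplus(F)$ and, after $n\to\infty$, the theorem.

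\emph{Main obstacle.} The delicate step is the existence of the scaling $s$, i.e. $\sup_{\{|x|\approx n+1\}}(\psi_2/\psi_1)\le\inf_{\{|x|\approx n\}}(\psi_2/\psi_1)$, which is not automatic since $\psi_2$ is only known to be a (possibly spiky) supersolution on an exterior domain. To arrange it, one first replaces $\psi_2$ on the annulus $\{n+\tfrac14<|x|<n+2\}$ by the solution of the Dirichlet problem for $F+\lambda$ with boundary values $\psi_2$ — admissible because $\lambda<\lambda_1^{+}(F)\le\lambda_1^{+}(F,\{n+\tfrac14<|x|<n+2\})$, giving a comparable supersolution ($\le\psi_2$ there by the comparison principle) which, being a solution, obeys the two-sided Harnack inequality of \cref{T3.1}; one treats $\psi_1$ analogously (taking it with a large constant boundary datum), so that $\psi_2/\psi_1$ varies over the fixed shell $\{n<|x|<n+1\}$ only by a constant depending on the ellipticity data and the shell's geometry, which is then absorbed by enlarging $s$. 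When passing to the limit in $n$, the interior estimates \cite[Theorem~3.3]{QS08} and the stability of $L^{N}$-sub/supersolutions \cite[Theorem~3.8 and Corollary~3.7]{CCKS} keep everything under control, and \cref{T2.1} guarantees that the interior Dirichlet problems — and hence the whole construction — are meaningful for all $\lambda<\lambda_1^{+}(F)$.
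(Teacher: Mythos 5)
There is a genuine gap at the step you yourself flag as the ``main obstacle,'' and the proposed repair does not work. Your min-gluing needs a single constant $s$ with $s\psi_1\le\psi_2$ near $\{|x|=n\}$ and $\psi_2\le s\psi_1$ near $\{|x|=n+1\}$, i.e.
$\sup_{\{|x|\approx n+1\}}\psi_2/\psi_1\;\le\; s\;\le\;\inf_{\{|x|\approx n\}}\psi_2/\psi_1$.
These two constraints bound $s$ from below and from above simultaneously, so ``absorbing the constant by enlarging $s$'' is impossible: increasing $s$ helps the outer inequality but destroys the inner one. What you would actually need is that the ratio $\psi_2/\psi_1$ near the outer sphere is dominated by its values near the inner sphere, and neither the Harnack inequality nor replacing $\psi_1,\psi_2$ by Dirichlet solutions on a shell gives any such monotonicity --- Harnack only gives two-sided comparability up to a constant $C>1$, which leaves the two requirements incompatible in general (already for $F=\Delta$, $\lambda=0$, $\psi_1\equiv1$ and $\psi_2=2-|x|^{2-N}$ the ratio increases radially and no $s$ exists for that pair). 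So the heart of the argument --- producing a global positive supersolution with positive infimum for every $\lambda$ strictly below the right-hand side --- is not established. A secondary issue: even if the gluing went through, $\min\{s\psi_1,\psi_2\}$ is only an $L^N$-viscosity supersolution and need not lie in $\Sobl^{2,N}(\RN)$, whereas $\pplamplus(F)$ is defined through strong supersolutions; the paper's remark on viscosity replacements is stated only for $\Psi^{\pm}$, so this substitution would itself need justification.

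For comparison, the paper avoids the ordering problem entirely by gluing additively rather than by minima: it first upgrades the exterior supersolution on $\bar\sB_R^c$ to a supersolution $\varphi$ with $\inf\varphi\ge1$ (via an auxiliary semilinear Dirichlet problem, \cref{L3.3} together with the nonzero-boundary-data existence result of Winter), then sets $u=\psi+\epsilon\chi\varphi$, where $\psi>0$ is an eigenfunction for $\lambda_1^{+}(F)$ and $\chi$ is a cutoff vanishing on $\sB_{R+2}$ and equal to $1$ outside $\sB_{R+3}$. Convexity and $1$-homogeneity make the sum a supersolution up to error terms supported in the compact transition annulus, and there the strict gap $\lambda<\lambda_1^{+}(F)$ yields the term $(\lambda-\lambda_1^{+}(F))\psi\le -c<0$, which absorbs the $\order(\epsilon)$ cutoff errors for $\epsilon$ small. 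That use of the strict inequality on a fixed compact set is the mechanism your construction is missing; if you want to salvage your scheme, you should replace the min-gluing by such an additive cutoff argument (or supply an independent reason why the ratio ordering can be arranged, which at present you do not have).
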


\begin{proof}
Notice that the function $\pplamplus(r)\df\pplamplus(F,\bar{\sB}_r^c)$ is an increasing function with respect to $r$ and 
\begin{align*}
\pplamplus(F)\,\leq\, \lim_{r\rightarrow \infty} \pplamplus(r)\,.
\end{align*}
Also, from definition we already have $\pplamplus(F)\leq \lambda_{1}^{+}(F)$. Combining these two we obtain 
\begin{align*}
\pplamplus(F)\,\leq\,\min\{\lambda_{1}^{+}(F),\lim_{r\rightarrow\infty}\pplamplus(F,
\bar{\sB}_r^c)\}\,.
\end{align*}
Let us now show that the above inequality can not be strict. That is, for every
$$
\lambda\,<\,\min\{\lambda_{1}^{+}(F),\lim_{r\rightarrow\infty}\pplamplus(F,
\bar{\sB}_r^c)\}\,,
$$
we have $\pplamplus(F)\geq \lambda$. To do this we need to construct a positive supersolution of the operator 
$F+\lambda$ in the admissible class of $\pplamplus(F)$. 
Choose a positive number $R$ so that $\lambda<\pplamplus(R)$. Then there exists positive function
$\phi\in\Sobl^{2,N}(\overline{\sB}_R^c)$ with  $\inf_{\sB^c_R}\phi>0$ and 
$F(D^2\phi,D\phi,\phi,x) + \lambda\phi\leq 0$ in $\overline{\sB}_R^c$.
We claim that there exists a function $\varphi\in\Sobl^{2, p}(\sB^c_{R+1}), p>N,$ with $\inf_{\sB^c_{R+1}}\varphi\geq 1$
and satisfies
\begin{equation}\label{ET3.6A}
F(D^2\varphi,D\varphi,\varphi,x) + \lambda\varphi\leq 0\quad \text{in}\; \sB^c_{R+1}\,.
\end{equation}
Let us first complete the proof assuming \eqref{ET3.6A}.  By Morrey's inequality we see that 
$\varphi\in \cC^1(\bar\sB_{R+1}^c)$. Consider a positive 
eigenfunction $\psi\in \Sobl^{2,N}(\RN)$ associated to $\lambda_{1}^{+}(F)$.
Choose a non-negative function $\chi\in \cC^2(\RN)$ such that 
$\chi=0$ in $\sB_{R+2}$ and $\chi=1$ in $\sB_{R+3}^c$.
For $\epsilon>0$, define $u\df\psi+\epsilon\chi\varphi$. Using convexity of
$F$ we can write 
\begin{align*}
F(D^2u,Du,u,x)\leq F(D^2\psi,D\psi,\psi,x)+\epsilon F(D^2(\chi\varphi),D(\chi\varphi),(\chi\varphi),x)\,.
\end{align*}
From the construction we can immediately say that $F(D^2u,Du,u,x)+\lambda u\leq0$ in $\sB_{R+2}\cup \sB_{R+3}^c$. We are left with the annuals region $\bar\sB_{R+3}\setminus \sB_{R+2}$. In this compact set we have
\begin{align*}
&F(D^2u,Du,u,x)+\lambda u 
\\
& \leq\, (\lambda - \lambda_{1}^{+}(F))\psi + \epsilon\left[F(D^2(\chi\varphi),D(\chi\varphi),(\chi\varphi),x)+\lambda\chi\varphi\right]
\\
& = (\lambda - \lambda_{1}^{+}(F))\psi + \epsilon \left[F(\chi D^2\varphi+2D\chi\cdot D\varphi
+\varphi D^2\chi,\chi D\varphi+\varphi D\chi,\chi\varphi,x)+\lambda\chi\varphi\right]
\\
& \leq  (\lambda - \lambda_{1}^{+}(F))\psi + \epsilon\chi \big[F(D^2\varphi,D\varphi,\varphi,x)+\lambda\varphi\big]+\epsilon F(2D\chi\cdot D\varphi+\varphi D^2\chi,\varphi D\chi,0,x)
\\
& \leq (\lambda - \lambda_{1}^{+}(F))\psi + \epsilon C <0,
\end{align*}
for $\epsilon$ small enough, where we have again used convexity of $F$. This of course, implies
$\pplamplus(F)\geq \lambda$, as required.

To complete the proof we only need to show \eqref{ET3.6A}. To this end, we may assume that $\inf\phi\geq 2$.
Let $c(x)=F(0,0,1,x)+\lambda$ and define $f(x, u)=|c(x)|f(u)$ where $f:\RR\to(-\infty, 0]$ is a Lipschitz
function with the property that $f(1)=-1$, $f(t)=0$ for $t\geq 2$. Then $\bar{u}=\phi$ is 
supersolution to 
\begin{equation*}
F(D^2u,Du,u,x) + \lambda u= f(x, u) \quad \text{in}\; \sB^c_{R}\,,
\end{equation*}
and $\underline{u}=1$ is a subsolution. The existence of a solution to \eqref{ET3.6A} follows by constructing
solutions (squeezed between $\bar{u}$ and $\underline{u}$) in an increasing sequence of bounded domains
in $\sB^c_R$ and
the passing to the limit using local stability bound \cite[Theorem~3.3]{QS08}. To construct a solution
in any smooth bounded domain we may follow the idea of \cref{L3.3} with the help of 
general existence results from \cite[Theorem~4.6]{W09} which deals with nonzero boundary condition.
\end{proof}

Now we would like to see if a result analogous to \cref{T3.6} holds for $\pplamminus(F)$. Denote by
$$G(S, p, u, x)=-F(-M, -p, -u, x).$$ It is easily seen that $G$ is a concave operator and
$\pplamminus(F)=\pplamplus(G)$. But we can not apply the arguments of \cref{T3.6} for concave operators.
To obtain the results we impose a mild condition at {\it infinity}.
\begin{theorem}\label{T3.7}
Suppose that 
\begin{align}\label{ET3.7A}
  \lim_{r\rightarrow\infty}\pplamminus(F,\overline{\sB}_r^c )\, =\,
   \lim_{r\rightarrow\infty}\pplamminus(G,\overline{\sB}_r^c )\,.
  \end{align}
Then we have
$$\pplamminus(F)\,=\,\min\left\{\lambda_{1}^{-}(F),
\lim_{r\rightarrow\infty}\pplamminus(F,\overline{\sB}_r^c )\right\}\,.
$$
\end{theorem}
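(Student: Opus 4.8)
The inequality $\pplamminus(F)\le\min\{\lambda_1^-(F),\lim_{r\to\infty}\pplamminus(F,\overline{\sB}_r^c)\}$ is immediate from the monotonicity of $r\mapsto\pplamminus(F,\overline{\sB}_r^c)$ and the definition of $\pplamminus(F)$, exactly as at the start of the proof of \cref{T3.6}; so the content is the reverse inequality. I would fix $\lambda<\min\{\lambda_1^-(F),\lim_{r\to\infty}\pplamminus(F,\overline{\sB}_r^c)\}$ and aim to produce $\psi\in\Sobl^{2,N}(\RN)$ with $\sup_{\RN}\psi<0$ and $F(D^2\psi,D\psi,\psi,x)+\lambda\psi\ge0$ in $\RN$, which forces $\pplamminus(F)\ge\lambda$.

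The plan is to transcribe the proof of \cref{T3.6} to the \emph{concave} operator $G(M,p,u,x)\df-F(-M,-p,-u,x)$. One checks directly that $\pplamminus(F)=\pplamplus(G)$, $\lambda_1^-(F)=\lambda_1^+(G)$, $\pplamminus(F,\Omega)=\pplamplus(G,\Omega)$, and (since $-G(-M,-p,-u,x)=F(M,p,u,x)$) also $\pplamminus(G,\Omega)=\pplamplus(F,\Omega)$ for every smooth $\Omega$; so the claim becomes $\pplamplus(G)=\min\{\lambda_1^+(G),\lim_{r\to\infty}\pplamplus(G,\overline{\sB}_r^c)\}$, and it suffices to build a positive $\phi\in\Sobl^{2,N}(\RN)$ with $\inf_{\RN}\phi>0$ and $G(D^2\phi,D\phi,\phi,x)+\lambda\phi\le0$ (then $\psi=-\phi$ works). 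Two structural facts make the transcription work. First, convexity and positive $1$-homogeneity of $F$ give $F(M,p,u,x)+F(-M,-p,-u,x)\ge2F(0,0,0,x)=0$, i.e.\ $G\le F$ pointwise; hence every supersolution of $F+\lambda$ is a supersolution of $G+\lambda$, and $\pplamplus(F,\Omega)\le\pplamplus(G,\Omega)=\pplamminus(F,\Omega)$. Second, \eqref{ET3.7A} reads precisely $\lim_{r\to\infty}\pplamplus(G,\overline{\sB}_r^c)=\lim_{r\to\infty}\pplamplus(F,\overline{\sB}_r^c)$, so our $\lambda$ also satisfies $\lambda<\lim_{r\to\infty}\pplamplus(F,\overline{\sB}_r^c)$. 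The sub/supersolution step in the proof of \cref{T3.6} (which uses only \cref{L3.3}, \cite[Theorem~4.6]{W09} and interior estimates, and does not use convexity of $F$) then yields, for some $R>0$, a function $\varphi\in\Sobl^{2,p}(\overline{\sB}_{R+1}^c)$, $p>N$, with $\inf_{\overline{\sB}_{R+1}^c}\varphi\ge1$ and $F(D^2\varphi,D\varphi,\varphi,x)+\lambda\varphi\le0$ in $\overline{\sB}_{R+1}^c$; by $G\le F$ this same $\varphi$ is a supersolution of $G+\lambda$ there. For the bounded part, $\lambda<\lambda_1^+(G)$ together with \cref{L3.2} (which gives $\lambda_1^+(G,\sB_\rho)=\lambda_1^-(F,\sB_\rho)\searrow\lambda_1^+(G)$) lets us fix $\rho>R+1$ with $\lambda<\lambda_1^+(G,\sB_\rho)$, so that $G+\lambda$ admits positive supersolutions, and solvable positive Dirichlet problems, in $\sB_\rho$.

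It remains to splice $\varphi$ with the bounded-region data into a single positive supersolution of $G+\lambda$ on $\RN$ with positive infimum, and this is the step I expect to be the main obstacle: in the convex case of \cref{T3.6} this is done additively, $u=\psi+\epsilon\chi\varphi$ with a cut-off $\chi$, using the subadditivity $F(A+B)\le F(A)+F(B)$, which is false for the concave $G$. I would instead combine two facts valid for any degenerate-elliptic operator, namely that the minimum of two supersolutions is a supersolution and that the monotone iteration of \cref{L3.3} goes through for $G$. Concretely: fix a Lipschitz $f_0\le0$ with $f_0(1)=-1$ and $f_0\equiv0$ on $[2,\infty)$, set $c(x)=G(0,0,1,x)$ and $f(x,u)=|c(x)+\lambda|\,f_0(u)$; on each large ball $\sB_n$ solve the Dirichlet problem $G(D^2\phi_n,D\phi_n,\phi_n,x)+\lambda\phi_n=f(x,\phi_n)$ squeezed between the subsolution $\underline u\equiv1$ and an upper barrier $\bar u_n$ that equals a fixed scaled multiple $M\varphi$ outside a fixed ball and equals a positive $G+\lambda$-supersolution (from the previous paragraph, normalized so that $\bar u_n\ge2$, whence $f(\cdot,\bar u_n)\equiv0$) inside, the two pieces pasted along a sphere lying strictly inside $\{|x|>R+1\}$ and glued there through the minimum-of-supersolutions principle; positive $1$-homogeneity of $G$ is used to match the pieces, and $\bar u_n$ is then independent of $n$ on compacta. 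Since $f\le0$, each $\phi_n$ is a supersolution of $G+\lambda$ with $\inf_{\sB_n}\phi_n\ge1$; as $\phi_n$ is squeezed under the $n$-independent $\bar u_n$ and solves an equation with locally bounded right-hand side, \cref{T3.1}, \cite[Theorem~3.3]{QS08} and a diagonal argument, together with the stability results of \cite{CCKS}, give a subsequential limit $\phi$ on $\RN$ with $\inf_{\RN}\phi\ge1$ and $G(D^2\phi,D\phi,\phi,x)+\lambda\phi\le0$, completing the proof. The genuinely new difficulty compared with \cref{T3.6} is the construction of this $n$-independent barrier that does not collapse on the inner gluing sphere --- the feature the additive gluing supplied automatically --- and it is here, through the passage $\lambda<\lim_{r}\pplamminus(F,\overline{\sB}_r^c)=\lim_{r}\pplamplus(F,\overline{\sB}_r^c)$ and the inequality $G\le F$, that I expect hypothesis \eqref{ET3.7A} to be essential: it furnishes near-infinity supersolutions of the \emph{convex} operator $F$, which by $G\le F$ serve as usable barriers for $G$ in the range $\lambda_1^+(F)<\lambda<\lambda_1^-(F)$ where the concave operator's own near-infinity supersolutions are too weak.
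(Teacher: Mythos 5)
Your reductions are all correct and, up to sign conventions, coincide with the first half of the paper's argument: the easy inequality, the identities $\pplamminus(F,\Omega)=\pplamplus(G,\Omega)$ and $\pplamminus(G,\Omega)=\pplamplus(F,\Omega)$, the reading of \eqref{ET3.7A} as furnishing $\lambda<\pplamplus(F,\overline{\sB}_R^c)$ for some $R$, the bound $G\le F$, and the use of the exterior construction of \cref{T3.6} for the \emph{convex} operator $F$ to produce $\varphi\in\Sobl^{2,p}(\sB_{R+1}^c)$ with $\inf\varphi\ge1$ and $F(D^2\varphi,D\varphi,\varphi,x)+\lambda\varphi\le0$ in $\sB_{R+1}^c$. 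The gap is exactly at the step you flag yourself: the splicing. Your $n$-independent barrier is never constructed, and as described it does not exist by mere scaling. If the interior supersolution of $G+\lambda$ is a Dirichlet principal eigenfunction of a ball, it vanishes on that ball's boundary, so near that sphere the minimum equals the interior piece, drops below the subsolution $\underline u\equiv1$, and cannot be continued; if instead you take a supersolution positive up to the boundary (eigenfunction of a larger ball), then the min-gluing across a transition region requires the interior piece to lie \emph{below} $M\varphi$ near the inner matching sphere and \emph{above} $M\varphi$ near the outer one, and a single multiplicative constant (which is all that positive $1$-homogeneity gives you) can enforce only one of these two opposite orderings: there is no reason the ratio of the two pieces is monotone in $|x|$, and $\varphi$ is only bounded below, not above. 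So the proposal is incomplete precisely at its crux.

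The paper avoids this issue entirely by observing that additive gluing \emph{is} available for the minus problem, because convexity and $1$-homogeneity give the mixed superadditivity $F(A+B)\ge F(A)+G(B)$ (from $F(A)\le F(A+B)+F(-B)$ and $F(0,0,0,x)=0$). Working with negative functions and the original operator $F$: take a negative eigenfunction $\psi$ of $F$ associated with $\lambda_1^-(F)$ (\cref{L3.2}), the negative exterior function $-\varphi$, which satisfies $\sup(-\varphi)\le-1$ and $G(D^2(-\varphi),D(-\varphi),-\varphi,x)+\lambda(-\varphi)\ge0$ in $\sB_{R+1}^c$, the cut-off $\chi$ of \cref{T3.6}, and set $u=\psi+\epsilon\chi(-\varphi)$. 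Then $F(D^2u,Du,u,x)\ge F(D^2\psi,D\psi,\psi,x)+\epsilon\,G\bigl(D^2(\chi(-\varphi)),D(\chi(-\varphi)),\chi(-\varphi),x\bigr)$, and the annulus computation of \cref{T3.6} goes through verbatim with superadditivity of the concave $G$ replacing subadditivity of $F$, the sign now being controlled by $(\lambda-\lambda_1^-(F))\psi>0$ bounded away from zero on the compact annulus; for $\epsilon$ small this yields $F(D^2u,Du,u,x)+\lambda u\ge0$ in $\RN$ with $\sup_{\RN}u<0$, hence $\pplamminus(F)\ge\lambda$. In short, your claim that ``additive gluing is unavailable because subadditivity fails for $G$'' is where you went off course: the inequality needed here is a lower bound on $F$ of a sum, and convexity supplies it in the mixed form above. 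If you insist on your iteration route, you must genuinely build the barrier (e.g.\ an interior supersolution blowing up at the boundary of its ball), which is exactly the piece missing from your argument.
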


\begin{proof}
It is easy to see that
\begin{align*}
\pplamminus(F)\,\leq\,\min\left\{\lambda_{1}^{-}(F),
\lim_{r\rightarrow\infty}\pplamminus(F,\overline{\sB}_r^c )\right\}\,.
\end{align*}
As done in \cref{T3.6}, we show that the above inequality can be strict. So we consider any 
\begin{equation}\label{ET3.7B}
\lambda\,<\, \min\left\{\lambda_{1}^{-}(F),
\lim_{r\rightarrow\infty}\pplamminus(F,\overline{\sB}_r^c )\right\}\,,
\end{equation}
and show that $\pplamminus(F)\geq \lambda$. We now construct a subsolution of the operator $F+\lambda$ in the admissible class of $\pplamminus(F)$. Using \eqref{ET3.7A} and \eqref{ET3.7B} we find a positive $R$
so that
$$\lambda\,<\, \pplamminus(G,\overline{\sB}_R^c)\,.$$
Hence repeating the arguments of \cref{T3.6} we can find
$\varphi\in\Sobl^{2,p}({\sB}_{R+1}^c)$, $p>N$, with $\sup_{{\sB}_{R+1}^c}\varphi<0$
 and $G(D^2\varphi,D\varphi,\varphi,x) + \lambda\varphi\geq 0$ in $\sB_{R+1}^c$.
 By Morrey's inequality $\varphi\in \cC^1(B_{R+1}^c)$. Also, consider a negative eigenfunction 
 $\psi\in\Sobl^{2,N}(\RN)$ associated to $\lambda_{1}^{-}(F)$. Let $\chi$ be the cut-off function in
 \cref{T3.6} and define $u=\psi+\epsilon\chi\varphi$ for $\epsilon>0$. Since, by convexity,
\begin{align*}
F(D^2u,Du,u,x)\,\geq\, F(D^2\psi,D\psi,\psi,x) + \epsilon G(D^2(\chi\phi),D(\chi\phi),(\chi\phi),x),
\end{align*}
repeating a calculation analogous to \cref{T3.6} we find that for some $\epsilon$ small
$F(D^2u,Du,u,x) + \lambda u\geq 0$ in $\RN$. Thus we get $\pplamminus(F)\geq \lambda$.
\end{proof}

To this end, we define $c(x)=F(0,0,1,x)$ and $d(x)=F(0,0,-1, x)$. Our next result is a
generalization to \cite[Proposition~1.11]{BR15}.

\begin{proposition}\label{P3.1}
Define $\zeta=\limsup_{|x|\rightarrow\infty}c(x)$ and $\xi= \limsup_{|x|\rightarrow\infty}d(x)$.
Then the following hold.
\begin{itemize}
\item[(i)] Suppose that $\zeta<0$, and either \eqref{E2.5} or \eqref{E2.6} holds.
Then $F$ satisfies the $+$MP if and only if $\lambda_{1}^{+}(F)>0$.
\item[(ii)] Suppose that $\xi>0$, and either \eqref{E2.5} or \eqref{E2.6} holds. Furthermore, assume
\eqref{ET3.7A}. Then $F$ satisfies the $-$MP if and only if $\lambda_{1}^{-}(F)>0$.
\end{itemize}
\end{proposition}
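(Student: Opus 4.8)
The plan is to combine the sufficient/necessary criteria for maximum principles (\cref{T2.4,T2.5}) with the exterior‑eigenvalue identities of \cref{T3.6,T3.7}; the sign conditions $\zeta<0$ and $\xi>0$ enter only to force the ``eigenvalue at infinity'' to be strictly positive. The first thing I would record is a consequence of $\zeta<0$: given $\epsilon\in(0,-\zeta)$, pick $r_\epsilon$ with $\sup_{\bar\sB_{r_\epsilon}^c}c\le\zeta+\epsilon<0$; then the constant function $\psi\equiv1$ satisfies $F(D^2\psi,D\psi,\psi,x)+\lambda\psi=c(x)+\lambda\le 0$ on $\bar\sB_r^c$ for all $r\ge r_\epsilon$ and all $\lambda\le-(\zeta+\epsilon)$, so $\lambda_1^{\prime\prime,+}(F,\bar\sB_r^c)\ge-(\zeta+\epsilon)$; letting $\epsilon\downarrow 0$ gives $\lim_{r\to\infty}\lambda_1^{\prime\prime,+}(F,\bar\sB_r^c)\ge-\zeta>0$. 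Feeding this into \cref{T3.6} and using $\lambda_1^{\prime\prime,+}(F)\le\lambda_1^{\prime,+}(F)\le\lambda_1^+(F)$ (from \cref{T3.5,T3.2}) yields $\lambda_1^{\prime\prime,+}(F)=\min\{\lambda_1^+(F),\lim_r\lambda_1^{\prime\prime,+}(F,\bar\sB_r^c)\}$, and hence $\lambda_1^+(F)>0\iff\lambda_1^{\prime\prime,+}(F)>0$, while $\lambda_1^+(F)=0$ forces $\lambda_1^{\prime,+}(F)=\lambda_1^{\prime\prime,+}(F)=0$.

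The easy implication of (i) then follows at once: if $\lambda_1^+(F)>0$ then $\lambda_1^{\prime\prime,+}(F)>0$, so \cref{T2.4}(i) applied with $\beta\equiv1$ (legitimate, since $\beta\equiv1$ satisfies both \eqref{E2.3} and \eqref{E2.4}, and $\lambda_{\beta\equiv1}^{\prime\prime,+}(F)=\lambda_1^{\prime\prime,+}(F)$ by $1$‑homogeneity) shows that $F$ satisfies the $+$MP. Conversely, assume $F$ satisfies the $+$MP. By \cref{T2.5}, $\lambda_1^{\prime,+}(F)\ge0$, hence $\lambda_1^+(F)\ge0$, and I must exclude $\lambda_1^+(F)=0$. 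If $\lambda_1^+(F)=0$, then $\lambda_1^{\prime,+}(F)=0$ by the previous paragraph, so there are bounded $\psi_k>0$ with $F(D^2\psi_k,D\psi_k,\psi_k,x)+\tfrac1k\psi_k\ge0$ in $\RN$, which I normalize by $\sup_{\RN}\psi_k=1$. Fix $r_0$ so large that $c(x)\le-2\epsilon_0<0$ on $\bar\sB_{r_0}^c$ and $\tfrac1k\le\epsilon_0$; then (H3) together with the homogeneity identity $F(0,0,u,x)=u\,c(x)$ for $u>0$ gives $\mathcal{M}^+_{\lambda,\Lambda}(x,D^2\psi_k)+\gamma|D\psi_k|-\epsilon_0\psi_k\ge0$ in $\bar\sB_{r_0}^c$, i.e.\ $\psi_k$ is a bounded subsolution of a coercive operator in this exterior domain.

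Comparing the bounded function $\psi_k$ on annuli $\sB_R\setminus\bar\sB_{r_0}$ with a radial barrier adapted to the growth hypothesis — of the form $(2-\E^{-\mu|x|})\sup_{\partial\sB_{r_0}}\psi_k$ under \eqref{E2.6}, or $(2-|x|^{-m})\sup_{\partial\sB_{r_0}}\psi_k$ under \eqref{E2.5} — plus a corrector that vanishes locally as $R\to\infty$ (its decay coming from the negative zeroth‑order coefficient), and then letting $R\to\infty$, I obtain $\sup_{\bar\sB_{r_0}^c}\psi_k\le 2\sup_{\partial\sB_{r_0}}\psi_k$ for all large $k$; consequently $\sup_{\bar\sB_{r_0}}\psi_k\ge\tfrac12$, so there is $y_k\in\bar\sB_{r_0}$ with $\psi_k(y_k)\ge\tfrac12$. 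Since $0<\psi_k\le1$, \cref{T3.1} and the interior estimates of \cite{QS08} provide local $\Sobl^{2,p}$‑bounds, so up to a subsequence $\psi_k\to\psi_*$ locally in $\Sobl^{2,p}(\RN)$ with $y_k\to y_*\in\bar\sB_{r_0}$; by the stability of $L^N$‑solutions \cite{CCKS}, $F(D^2\psi_*,D\psi_*,\psi_*,x)\ge0$ in $\RN$, $0\le\psi_*\le1$, $\psi_*(y_*)\ge\tfrac12$, and the strong maximum principle \cite[Lemma~3.1]{QS08} forces $\psi_*>0$. Thus $\psi_*$ is a bounded positive function with $F(D^2\psi_*,D\psi_*,\psi_*,x)\ge0$, contradicting the $+$MP; hence $\lambda_1^+(F)>0$, proving (i).

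For (ii) I set $G(M,p,u,x)=-F(-M,-p,-u,x)$, so that the $-$MP for $F$ is the $+$MP for $G$, $\lambda_1^+(G)=\lambda_1^-(F)$, $\lambda_1^{\prime,+}(G)=\lambda_1^{\prime,-}(F)$, and $\lambda_1^{\prime\prime,+}(G,\Omega)=\lambda_1^{\prime\prime,-}(F,\Omega)$. Since $G$ is concave, \cref{T3.6} is not available for $G$; its role is taken by \cref{T3.7}, whose hypothesis is precisely \eqref{ET3.7A}, and the sign condition $\xi>0$ is used, in place of $\zeta<0$, to produce $\lim_{r\to\infty}\lambda_1^{\prime\prime,-}(F,\bar\sB_r^c)>0$ by the same constant‑function computation as in the first paragraph (now with $-1$). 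With this, \cref{T3.7} gives $\lambda_1^{\prime\prime,-}(F)=\min\{\lambda_1^-(F),\lim_r\lambda_1^{\prime\prime,-}(F,\bar\sB_r^c)\}$, and repeating the two previous paragraphs for $G$ (invoking \cref{T2.4}(ii), \cref{T2.5}(ii), and the exterior barrier for the coercive operator associated with $G$) yields ``$-$MP $\iff\lambda_1^-(F)>0$''. I expect the genuinely delicate step to be the exterior maximum‑principle estimate of the third paragraph — dominating the bounded subsolutions $\psi_k$ by their boundary data on $\partial\sB_{r_0}$ \emph{uniformly in $k$}, and then ensuring the limit $\psi_*$ does not degenerate to $0$, which is exactly what the normalization $\sup_{\RN}\psi_k=1$ together with that estimate secures; in part (ii) the extra subtlety is the loss of convexity for $G$, which is the reason \eqref{ET3.7A} has to be imposed.
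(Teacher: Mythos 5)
Your forward implication (in both parts) is exactly the paper's: the constant test function plus \cref{L3.4}-type reasoning gives $\lim_{r\to\infty}\pplamplus(F,\bar\sB_r^c)\geq-\zeta>0$, then \cref{T3.6} (resp.\ \cref{T3.7} under \eqref{ET3.7A}) and \cref{T3.3} yield the maximum principle. The converse is where you diverge, and there is a genuine gap there: you take the near-optimal admissible functions $\psi_k$ from $\lambda_1^{\prime,+}(F)=0$, which are only bounded positive \emph{subsolutions} of $F(D^2\psi_k,D\psi_k,\psi_k,x)+\tfrac1k\psi_k\geq0$, and then claim that ``\cref{T3.1} and the interior estimates of \cite{QS08} provide local $\Sobl^{2,p}$-bounds'' so that $\psi_k\to\psi_*$ in $\Sobl^{2,p}_{\mathrm{loc}}$. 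Neither tool applies here: the Harnack inequality of \cref{T3.1} requires the \emph{two-sided} inequalities (both the $\cM^+$ and the $\cM^-$ bounds with the same right-hand side $f$), and the interior $W^{2,p}$ estimate of \cite[Theorem~3.3]{QS08} is an estimate for solutions of an equation; a family of uniformly bounded subsolutions carries no uniform $W^{2,p}_{\mathrm{loc}}$ (nor even equicontinuity) control — think of bounded subharmonic functions such as $\max\{0,1-\epsilon^{N-2}|x|^{2-N}\}$, which are uniformly bounded but not precompact in any topology strong enough to pass to the limit inside the class $\Sobl^{2,N}_{\mathrm{loc}}$ to which the $+$MP applies. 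Replacing the limit by an upper half-relaxed limit would only produce an u.s.c.\ viscosity subsolution, which is outside the class in which the $+$MP is formulated and proved. So the step producing the bounded positive function $\psi_*$ with $F(D^2\psi_*,D\psi_*,\psi_*,x)\geq0$ fails as written, and the same defect is inherited by your treatment of part (ii), which ``repeats the two previous paragraphs for $G$''.

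The paper's converse avoids this by working with objects that solve \emph{equations}: assuming $+$MP and $\lambda_1^+(F)=0$, it takes the Dirichlet eigenpairs $(\psi^+_{1,n},\lambda^+_{1,n})$ from \eqref{EL3.1A}, normalizes $v_n=\kappa_n^{-1}\psi^+_{1,n}$ against a fixed bounded profile $\phi$ (with $\phi\equiv1$ outside a compact set), and uses $\zeta<0$ together with the strong maximum principle to force the touching point of $v_n$ and $\phi$ into a fixed compact $K$; this gives simultaneously the uniform bound $v_n\leq\phi$ and nondegeneracy, after which Harnack and the $W^{2,p}$ estimates legitimately apply to the equations $F(D^2v_n,Dv_n,v_n,x)=-\lambda^+_{1,n}v_n$ and produce a bounded positive eigenfunction contradicting the $+$MP. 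If you want to salvage your Phragm\'en--Lindel\"of idea (which, as an exterior decay estimate for subsolutions under \eqref{E2.5} or \eqref{E2.6}, is itself fine), you should run it on the eigenfunctions $v_n$ rather than on the $\psi_k$, i.e.\ reinsert an equation before invoking compactness. A minor further point: in part (ii) the constant function $-1$ only gives $\pplamminus(F,\bar\sB_r^c)\geq\inf_{\bar\sB_r^c}d$, so your ``same computation'' really needs $\liminf_{|x|\to\infty}d(x)>0$ rather than $\xi=\limsup_{|x|\to\infty}d(x)>0$ (the paper is equally terse on this, but it is worth flagging if you write the argument out).
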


We need a small lemma to prove \cref{P3.1}.
\begin{lemma}\label{L3.4}
The following hold for any smooth domain $\Omega$. 
\begin{itemize}
\item[(i)] $-\sup_{\Omega}\ c(x)\,\leq\, \inf_{\Omega} d(x)$.
\item[(ii)] $-\sup_{\Omega} \ c(x)\,\leq\, \pplamplus(F,\Omega)$.
\item[(iii)] $\inf_{\Omega}\ d(x)\,\leq\, \pplamminus(F,\Omega)$.
\end{itemize}
\end{lemma}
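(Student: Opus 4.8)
The plan is to prove all three inequalities by elementary means: part (i) comes from convexity (H2) together with the normalization $F(0,0,0,x)\equiv 0$ from (H1), while parts (ii) and (iii) follow by testing the definitions of $\pplamplus(F,\Omega)$ and $\pplamminus(F,\Omega)$ with constant functions. First I would record the pointwise bound $c(x)+d(x)\ge 0$ for every $x\in\RN$: applying convexity of $F$ in $(M,p,u)$ at the midpoint of $(0,0,1)$ and $(0,0,-1)$ gives $0=F(0,0,0,x)\le\tfrac{1}{2}F(0,0,1,x)+\tfrac{1}{2}F(0,0,-1,x)=\tfrac{1}{2}\bigl(c(x)+d(x)\bigr)$. (Note also $\abs{c(x)},\abs{d(x)}\le\delta(x)$ by (H3), so $c$ and $d$ are locally bounded and all the suprema and infima below are well defined in $[-\infty,\infty]$.) For (i), the bound $-c(x)\le d(x)$ yields $\inf_\Omega(-c)\le -c(x)\le d(x)$ for every $x\in\Omega$; taking the infimum over $x\in\Omega$ on the right gives $-\sup_\Omega c(x)=\inf_\Omega(-c)\le\inf_\Omega d(x)$.

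For (ii), I would plug $\psi\equiv 1$ into the definition of $\pplamplus(F,\Omega)$. This $\psi$ lies in $\Sobl^{2,N}(\Omega)$ (it is smooth) and satisfies $\inf_\Omega\psi=1>0$, while $F(D^2\psi,D\psi,\psi,x)+\lambda\psi=c(x)+\lambda$, which is $\le 0$ throughout $\Omega$ exactly when $\lambda\le-\sup_\Omega c(x)$. Hence $\lambda=-\sup_\Omega c(x)$ belongs to the admissible class defining $\pplamplus(F,\Omega)$, so $-\sup_\Omega c(x)\le\pplamplus(F,\Omega)$ (the claim being vacuous when $\sup_\Omega c=+\infty$). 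Part (iii) is the symmetric statement, obtained by testing with $\psi\equiv -1$: now $\psi\in\Sobl^{2,N}(\Omega)$ with $\sup_\Omega\psi=-1<0$, and $F(D^2\psi,D\psi,\psi,x)+\lambda\psi=d(x)-\lambda$, which is $\ge 0$ in $\Omega$ iff $\lambda\le\inf_\Omega d(x)$; thus $\lambda=\inf_\Omega d(x)$ is admissible and $\inf_\Omega d(x)\le\pplamminus(F,\Omega)$.

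There is no real obstacle here; the only points that need care are checking that the constant test functions genuinely qualify — membership in $\Sobl^{2,N}(\Omega)$, which is immediate, and the strict sign on $\inf_\Omega\psi$ respectively $\sup_\Omega\psi$ — and disposing of the degenerate cases $\sup_\Omega c(x)=+\infty$ or $\inf_\Omega d(x)=-\infty$, in which the asserted inequalities hold trivially.
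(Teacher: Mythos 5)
Your proof is correct and follows essentially the same route as the paper: convexity of $F$ together with $F(0,0,0,x)=0$ gives $c(x)+d(x)\ge 0$ for (i), and testing the definitions of $\pplamplus(F,\Omega)$ and $\pplamminus(F,\Omega)$ with the constant functions $\psi\equiv 1$ and $\psi\equiv -1$ gives (ii) and (iii). You simply spell out the details (admissibility of the constants and the degenerate cases) that the paper leaves implicit.
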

	
\begin{proof}
Part (i) follows from convexity property of $F$. Note that for $\lambda=-\sup_{\Omega} \ c(x)$, $\psi=1$
is an admissible function for $\pplamplus(F,\Omega)$. This gives us (ii).
In a similar fashion we get (iii).
\end{proof}

Now we prove \cref{P3.1}
\begin{proof}[Proof of \cref{P3.1}]
First consider (i). Assume that $\lambda_{1}^{+}(F)>0$. Using \cref{L3.4} we obtain 
\begin{align}\label{EP3.1A}
0\,<\,-\zeta\,=\,\lim_{r\rightarrow\infty}\left(-\sup_{\bar{\sB}_r^c}\ c(x)\right)\leq\,\lim_{r\rightarrow\infty}\,\pplamplus(F,\overline{\sB}_r^c )\,.
\end{align}
By \cref{T3.6}, we obtain $\pplamplus(F)>0$, and therefore, using \cref{T3.3}
 we see that $F$ satisfies the $+$MP. 
To show the converse direction we assume that $F$ satisfies $+$MP. 
Then \cref{T2.5} implies that $\plamplus(F)\geq 0$. Using \cref{T3.2} we then have
$\lambda_{1}^{+}(F)\geq 0$. If possible, suppose that $\lambda_{1}^{+}(F)= 0$. 
We show that there exists a bounded principal eigenfunction $\varphi$
which would give a contradiction to the validity of $+$MP, and hence we must have
$\lambda_{1}^{+}(F)> 0$. Consider a smooth positive function $\phi$ satisfying 
$\phi=1$ in $\sB^c_r$ for some large $r$. Since $\zeta<0$, we have a compact set $K$ satisfying 
$$ c(x)<\frac{\zeta}{2} \phi(x), \quad \phi(x)=1, \quad x\in K^c.$$
Recall the Dirichlet principal eigenfunciton $\psi^+_{1,n}$ from \eqref{EL3.1A}.
Choose $\kappa_n=\max_{\sB_n}\frac{\psi^+_{1,n}}{\phi}$ and define 
$v_n=\kappa^{-1}_n\psi^+_{1, n}$. Observe that $\phi-v_n$ must vanish in $K$. Indeed,
in $\sB_n\setminus K$ we have 
\begin{align*}
\cM_{\lambda,\Lambda}^-(x, D^2(\phi-v_n))-\gamma(x)|D\phi-v_n|-\delta(x)(\phi-v_n)
&\leq F(0, 0, \phi, x)- F(D^2 v_n, Dv_n, v_n, x)
\\
&\leq \frac{\zeta}{2} + \lambda^+_{1,n} v_n
\\
&\leq \frac{\zeta}{2} + \lambda^+_{1,n} <0\,,
\end{align*}
for all large $n$,
and therefore, by strong maximum principle \cite[Lemma~3.1]{QS08}, $\phi-v_n$ can not vanish in $\sB_n\setminus K$. Now applying Harnack's inequality and standard 
$\Sob^{2, p}$ estimates we can extract a convergent subsequence of $v_n$ converging to a positive eigenfunction $\varphi$. This completes the proof.

%\cref{T3.6} and \eqref{EP3.1A} give us $\pplamplus(F)= 0$ and therefore, by \cref{L3.4}(ii) we obtain 
%$-\sup_{\RN}\ c(x)\leq 0$. This clearly contradicts the hypothesis. Hence $\lambda^+_1(F>0$.

The proof for (ii) would be analogous.
\end{proof}

Next we prove \cref{T2.3}
\begin{proof}[Proof of \cref{T2.3}]
(i)\, From the definition it follows that 
$$
\pplamplus(F,\bar{\sB}_r^c )\geq \pplamplus(\tilde{F},\overline{\sB}_r^c )
-\sup_{\bar{\sB}_r^c} \tilde\gamma(x)\,,
$$ 
and then letting $r$ towards infinity we have 
$$\lim_{r\rightarrow\infty}\pplamplus(F,\bar{\sB}_r^c )\,\geq\,
 \lim_{r\rightarrow\infty}\pplamplus(\tilde{F},\overline{\sB}_r^c )\geq \pplamplus(\tilde{F})
 =\lambda_{1}^{+}(\tilde{F})\,.
 $$
Since $\tilde\gamma(x)\geq 0$, it gives us 
$\lambda_{1}^{+}(\tilde{F})\geq \lambda_{1}^{+}(F)$. Combining it with above calculation, we find 
$$\lim_{r\rightarrow\infty}\pplamplus(F,\bar{\sB}_r^c )\geq \lambda_{1}^{+}(F).$$ Applying
\cref{T3.6} we obtain  $\lambda_{1}^{+}(F)=\pplamplus(F)$.

(ii)\, Using \cref{L3.4} and the given hypothesis we find 
\begin{align*}
\lambda_{1}^{+}(F)\,\leq\,-\limsup_{|x|\rightarrow\infty}\ c(x)\,=\,
\lim_{r\rightarrow\infty}\ \big(-\sup_{\overline{\sB}_r^c}\ c(x) \big)
\,\leq\,\lim_{r\rightarrow\infty}\pplamplus(F,\overline{\sB}_r^c )\,.
\end{align*}
Hence, by \cref{T3.6}, we get $\lambda_{1}^{+}(F)=\pplamplus(F)$.

(iii)\, We show that under the given condition we have (ii). Hence it is enough to show that 
if $\sigma<\limsup_{|x|\rightarrow\infty}\ c(x)$ then $\lambda_{1}^{+}(F)\leq -\sigma$.
Now define a positive function 
$$\psi(x)=\exp\left(-\frac{1}{1-|\varepsilon x|^2}\right)$$
on the ball $\sB_{\frac{1}{\varepsilon}}$ where an appropriate $\varepsilon$ will be chosen later.
It is easily checked that 
\begin{align*}
D_{x_i}\psi &=\,  \frac{-2\varepsilon^2x_i}{(1-|\varepsilon x|^2)^2}\psi,
\\
D_{x_ix_j}\psi &=\, \bigg[\frac{4\varepsilon^4}{(1-|\varepsilon x|^2)^4}x_ix_j - \frac{2\varepsilon^2}{(1-|\varepsilon x|^2)^2}\delta_{ij}-\frac{8\varepsilon^4}{(1-|\varepsilon x|^2)^3}x_ix_j\bigg]\psi.
\end{align*}
For $x_0\in\RN$, define $\phi(x)=\psi(x-x_0)$.		
We will choose $\varepsilon$ and $x_0$ such that
\begin{equation}\label{ET2.3A}
F(D^2\phi,D\phi,\phi,x)-\sigma\phi>0\quad \text{in}\; \sB_{\frac{1}{\varepsilon}}(x_0).
\end{equation}
Since all the notions of eigenvalues of $F$ coincide in bounded domains (cf.\ \cite{QS08}),
 using \eqref{ET2.3A} we deduce
$$ -\sigma \geq\, \plamplus(F,\sB_{\frac{1}{\varepsilon}}(x_0)) \,=\,
\lambda^{+}_1(F,\sB_{\frac{1}{\varepsilon}}(x_0))\geq \lambda_{1}^{+}(F).$$
Thus we only need to establish \eqref{ET2.3A}. For a different way to construct such subsolutions we refer
\cite{R08}. Using (H3) we see that
\begin{align}\label{ET2.3B}
F(D^2\phi,D\phi,\phi,x)-\sigma\phi 
&=\, F(D^2\phi,D\phi,\phi,x)-F(0,0,\phi,x) + F(0,0,1,x)\phi-\sigma \phi \nonumber 
\\
& \geq\, \cM_{\lambda,\Lambda}^-(x,D^2\phi)-\gamma(x)|D\phi|+c(x)\phi -\sigma\phi \nonumber
\\
& \geq \bigg[\frac{4\lambda_0 \varepsilon^2 |\varepsilon (x-x_0)|^2}{(1-|\varepsilon (x-x_0)|^2)^4}-\frac{2N\Lambda_0\varepsilon^2}{(1-|\varepsilon (x-x_0)|^2)^2} -
\frac{8\Lambda\varepsilon^2 |\varepsilon (x-x_0)|^2}{(1-|\varepsilon (x-x_0)|^2)^3}\nonumber
\\
&\qquad - \frac{2\epsilon^2|x-x_0|\gamma(x)}{(1-|\epsilon (x-x_0)|^2)^2}+c(x)-\sigma \bigg]\phi\,.
\end{align}
Given $\varepsilon$ we choose $R$ such that $|\gamma(x)|\leq \varepsilon$ for $|x|\geq R$ and
then choose $x_0\in\RN$ satisfying $|x_0|\geq R+2\varepsilon^{-1}$. Furthermore, due to our
hypothesis, we can choose $x_0$ such that 
\begin{equation}\label{ET2.3C}
\inf_{\sB_{\frac{1}{\varepsilon}}(x_0)} c(x)\,>\,\sigma\,.
\end{equation}
We now compute \eqref{ET2.3B} in two steps.

\noindent{\bf Step 1.}\, Suppose $1-\delta<|\varepsilon(x-x_0)|^2<1$ where $\delta$ is very close to zero and will be chosen later. It then follows from \eqref{ET2.3B}  that
\begin{align*}
F(D^2\phi,D\phi,\phi,x)-\sigma\phi 
& \geq\, \frac{\varepsilon^2}{(1-|\varepsilon (x-x_0)|^2)^4}\bigg[4\lambda  (1-\delta)-2N\Lambda\delta^2 -8\Lambda(1-\delta)\delta  - 2 \delta^2\bigg]\phi
\\
&\qquad + \big(c(x)-\sigma \big)\phi\,.
\end{align*} 
Now we can choose small positive $\delta$, independent of $\varepsilon$, so that 
$$4\lambda  (1-\delta)-2N\Lambda\delta^2 -8\Lambda(1-\delta)\delta  - 2 \delta^2>0.$$
This proves \eqref{ET2.3A} in the annulus.

\noindent{\bf Step 2.}\, Now we are left with the part $0 \leq|\varepsilon (x-x_0)|^2\leq 1-\delta$ where $\delta$ is already chosen in Step 1. An easy calculation reveals
$$F(D^2\phi,D\phi,\phi,x)-\sigma\phi 
\,\geq\, \bigg[\big( c(x)-\sigma \big) -\frac{2N\Lambda\varepsilon^2}{\delta^2} -\frac{8\Lambda(1-\delta)\varepsilon^2}{\delta^3}  - \frac{2 \varepsilon^2}{\delta^2}\bigg]\phi\,.$$
Using \eqref{ET2.3C}, we can choose $\varepsilon$ small enough so that the RHS becomes positive.

Combining the above steps we obtain \eqref{ET2.3A}, completing the proof of part (iii).

(iv)\, This follows from \cref{T3.6}. Let us also provide a more direct proof.
Let $\varphi^*$ be an eigenfunction corresponding to $\lambda^+_1(F)=\lambda^+_1$. For $\delta, 
\varepsilon>0$ we define $\phi_\varepsilon=\varphi^*+\varepsilon V$. Choose $\varepsilon$ small
enough so that 
\begin{equation}\label{ET2.3D}
\delta\,\min_{\bar\sB}\varphi^*\;>\;\varepsilon \max_{\bar\sB} [F(D^2V,DV, V, x)+\lambda^+_1 V].
\end{equation}
By using convexity and homogeneity it follows that
\begin{align*}
F(D^2\phi_\varepsilon, D\phi_\varepsilon,\phi_\varepsilon, x)
&\leq F(D^2\varphi^*, D\varphi^*,\varphi^*, x) + \varepsilon F(D^2 V, D V,V, x)
\\
&= -\lambda^+_1 \varphi^* + \varepsilon \Ind_{\sB}(x) F(D^2V,DV, V, x) -
\varepsilon\lambda^+_1\,\Ind_{\sB^c}(x) V(x)
\\
&\leq -\lambda^+_1\phi_\varepsilon + \varepsilon \max_{\bar\sB} [F(D^2V,DV, V, x) + \lambda^+_1V]
\\
&\leq -(\lambda^+_1-\delta)\phi_\varepsilon,
\end{align*}
using \eqref{ET2.3D}. Hence $\pplamplus(F)\geq \lambda^+_1(F)-\delta$ and from the arbitrariness of
$\delta$ the result follows.
\end{proof}
Thus it remains to prove \cref{T2.6,T2.7}. Let us first attack \cref{T2.6}.
\begin{proof}[Proof of \cref{T2.6}]
Without any loss of generality, we assume that $\lambda^+_1(F)=0$.
Recall from \cref{L3.1} that the pair $(\psi^+_{1,n}, \lambda^+_{1,n})$ 
solving the Dirichlet eigenvalue problem with positive eigenfunction in $\sB_n$. That is,
\begin{equation}\label{ET2.6B}
F(D^2\psi^+_{1,n},D\psi_{1,n}^+,\psi_{1,n}^+,x)\,=\,-\lambda_{1,n}^+\psi_{1,n}^+\quad \text{in}\; \sB_n\,,
\quad \psi^+_{1,n}>0\; \text{in}\; \sB_n, \; \text{and}\; \psi^+_{1,n}=0\; \text{on}\; \partial\sB_n\,.
\end{equation}
Let $\kappa_n>0$ be such that 
$\kappa_n\psi^+_{1,n}\leq V$ in $\sB_n$ and it touches $V$ at some point in $\sB_n$. We claim that
$\kappa_n\psi^+_{1,n}$ has to touch $V$ inside $K$. Note that, by (H3), if $w=V-\kappa_n\psi^+_{1,n}$ then
$$\cM^-_{\lambda, \Lambda}(x, w)-\gamma |Dw|-\delta w\leq -\varepsilon V+\lambda^{+}_{1,n}(\kappa_n\psi^+_{1,n})\leq (-\varepsilon+\lambda^{+}_{1,n}) (\kappa_n\psi^+_{1,n})\,\leq\, 0
\quad \text{in}\; K^c\cap \sB_n\,,$$
for large $n$, using \eqref{ET2.6A} and \eqref{ET2.6B}.
Thus, if $w$ vanishes in $K^c\cap \sB_n$, then it must be identically 
$0$ in $K^c\cap \sB_n$, by the strong maximum principle \cite[Lemma~3.1]{QS08}. Bnd this is not possible
since $w>0$ on $\partial\sB_n$.
Now onwards we denote $\kappa_n\psi^+_{1,n}$ by $\psi^+_{1,n}$. By the above normalization, $\psi^+_{1,n}$
would converge, up to a subsequence, to a positive function $\varphi\in\Sobl^{2,p}(\RN), p<\infty$, an eigenfunction corresponding to $\lambda^+_1(F)=0$. See for instance, the argument in \cref{L3.1}.

We now show that any other principal eigenfunction is a multiple to $\varphi$.
For $\eta$, a small positive number, we define $\Xi_\eta=\psi^+_{1,n}-\eta V$. Using convexity
of $F$ we note that, in $\sB_n\cap K^c$,
\begin{align*}
F(D^2\Xi_\eta, D\Xi_\eta, \Xi, x) &\geq F(D^2\psi^+_{1,n},D\psi_{1,n}^+,\psi_{1,n}^+,x)
- \eta F(D^2V,DV,V,x)
\\
& \geq (-\lambda^{+}_{1,n}\psi^+_{1,n} + \eta\varepsilon V)
\\
&\geq (-\lambda^{+}_{1,n}+ \eta\varepsilon)V>0\,,
\end{align*}
provided we choose $n$ large (depending on $\eta$).
Let $\psi$ be any principal eigenfunction satisfying $$F(D^2\psi,D\psi,\psi,x)=0 \text{ in } \RN.$$
 Define
$$\delta=\delta(\eta)=\min_{K}\,\frac{\psi}{\Xi_\eta}.$$
Then $\delta\Xi_\eta\leq \psi$ on $K$. Since, by the Harnack inequality,
$$0< \inf_n\,\inf_{K}\psi^+_{1,n}\,\leq\, \sup_n\,\sup_{K}\psi^+_{1,n}<\,\infty,$$
we can choose $\eta_0$ small enough (independent of $n$) so that 
$$0< \inf_{\eta\in(0, \eta_0]}\,\inf_n\,\,\inf_{K}\Xi_\eta\,\leq\, \sup_{\eta\in(0, \eta_0]}\,\sup_n
\,\sup_{K}\Xi_\eta<\,\infty,$$
Thus, $\delta$ remains bounded and positive as $n\to\infty$ and $\eta\to 0$.
Since $F(D^2\psi,D\psi,\psi,x)=0$ in $\sB_n\cap K^c$ and $\lambda^+_1(F, \sB_n\cap K^c)>0$, it follows
from \cite[Theorem~1.5]{QS08}, that 
$$\delta \Xi_\eta\leq \psi\quad \text{in}\; \sB_n\,.$$
Furthermore, there exists $x_\eta\in K$ so that $\delta\Xi_\eta(x_\eta)=\psi(x_\eta)$. 
Now letting $n\to\infty$ first, and then $\eta\to 0$, we can extract a 
subsequence so that $\delta\to \theta>0$, and $x_\eta\to \hat{x}\in K$ 
 and $\theta\varphi(\hat{x})=\psi(\hat{x})$ with
$\theta\varphi\leq \psi$ in $\RN$. Let $u=\psi-\theta\varphi$. It is easy to see that
$$\cM^-_{\lambda, \Lambda}(x, u)-\gamma |Du|-\delta u\leq 0\quad \text{in}\; \RN.$$
By the strong maximum principle we must have $u=0$ and hence the proof.
\end{proof}

Finally, we prove \cref{T2.7}.
\begin{proof}[Proof of \cref{T2.7}]
The main idea of the proof is the same as that of the proof of \cref{T2.6}. Without any loss in generality, we
assume that $\lambda^-_1(F)=0$. 
Let $(\psi^-_{1, n}, \lambda^-_{1,n})$ be the pair satisfying the Dirichlet eigenvalue problem in
the ball $\sB_n$ i.e.,
\begin{equation}\label{ET2.7C}
F(D^2\psi^-_{1,n},D\psi_{1,n}^-,\psi_{1,n}^-,x)\,=\,-\lambda_{1,n}^-\psi_{1,n}^-\quad \text{in}\; \sB_n\,,
\quad \psi^-_{1,n}<0\; \text{in}\; \sB_n, \; \text{and}\; \psi^-_{1,n}=0\; \text{on}\; \partial\sB_n\,.
\end{equation}
By \cref{L3.2}, $\psi_{1,n}^-\searrow 0$ as $n\to\infty$. Recall that $G(M,p,u,x)\df-F(-M,-p,-u,x)$.
Denote by $\phi_n= -\psi_{1,n}^-$. Then we get from \eqref{ET2.7C} that
\begin{equation}\label{ET2.7D}
G(D^2\phi_{n},D\phi_{n},\phi_{n},x)\,=\,-\lambda_{1,n}^-\phi_{n}\quad \text{in}\; \sB_n\,,
\quad \psi_{n}<0\; \text{in}\; \sB_n, \; \text{and}\; \phi_{n}=0\; \text{on}\; \partial\sB_n\,.
\end{equation}
Note that $G$ satisfies (H1), (H2) and (H3) but it is a concave operator. So need some extra care to apply
the proof of \cref{T2.6}. Since $F$ is convex it follows from \eqref{ET2.7B} that
\begin{equation}\label{ET2.7E}
G(D^2V, DV, V, x)\,\leq\,F(D^2V, DV, V, x)\leq -(\lambda_1^-(F)+\varepsilon) V\quad \text{for all}\; x\in K^c.
\end{equation}
As done in \cref{T2.6}, using \eqref{ET2.7E}, we can normalize $\phi_n$ to touch $V$ from below and it would touch $V$ somewhere
in $K$. Therefore, we can apply the Harnack inequality (see \cref{L3.2}) to find a positive function $\varphi$ such that
$\phi_n\to\varphi$ in $\Sobl^{2, p}(\RN), p>N$, along some subsequence and 
$$0=-\lambda^-_1(F)\varphi=G(D^2\varphi,D\varphi,\varphi,x)=-F(-D^2\varphi,-D\varphi,-\varphi,x)\quad \text{in}\; \RN\,.$$
It is enough to show that $\varphi$ agrees with any other positive eigenfunction (up to a multiplicative constant) of $G$ with eigenvalue $0$. 

Next we define $\Xi_\eta(x)=\phi_n-\eta V$. Since $\norm{\phi_n-\varphi}_{L^\infty(K)}\to 0$, it is evident
that $\Xi_\eta>0$ for all $\eta$ small, independent of $n$. Using \eqref{ET2.7B} and \eqref{ET2.7C}, we see that, in
$K^c\cap\sB_n$,
\begin{align}\label{ET2.7F}
F(-D^2\Xi_\eta, -D\Xi_\eta, -\Xi, x) &\leq F(-D^2\phi_{n}, -D\phi_n, -\phi_n,x)+ \eta F(D^2V, DV, V, x)\nonumber
\\
&\leq (\lambda^{-}_{1,n}\phi_{n}-\eta\varepsilon V)\nonumber
\\
&\leq (|\lambda^{-}_{1,n}|- \eta\varepsilon)V<0,
\end{align}
for all large $n$. Now consider any positive eigenfunction $\psi\in\Sobl^{2,p}(\RN)$ satisfying
$$F(-D^2\psi, -D\psi, -\psi, x)\,=\,0\,,$$
and let 
$$\delta=\delta(\eta)=\min_{K}\frac{\psi}{\Xi_\eta}\,.$$
Then $-\delta\Xi_\eta\geq -\psi$ on $\partial K\cup\partial\sB_n$ for all $n$.
From \eqref{ET2.7B} if follows that $\lambda^+_1(F, K^c)\geq \varepsilon$.
Since $$\lambda^+_1(F, K^c\cap\sB_n)\to \lambda^+_1(F, K^c)>0 \text{ as } n\to\infty,$$ we can
apply the maximum principle \cite[Theorem~1.5]{QS08} in $\sB^c\cap K$ for all large $n$. From \eqref{ET2.7F} we
therefore
get $\psi\geq \delta\Xi_\eta$ and $\delta\Xi_\eta$ touches $\psi$ at some point in $K$.
Now we can follow the arguments in \cref{T2.6} we show that $\varphi=t\psi$ for some $t>0$.
Hence the proof.
\end{proof}

We conclude the paper with a remark on the eigenvalue problem in a general smooth unbounded domain.
\begin{remark}\label{R3.1}
For the case of an unbounded domain with smooth boundary all the results developed
here hold true and the proofs would be somewhat similar. As mentioned in \cite{BR15}, in case
of general unbounded domains, one needs the boundary Harnack property to control the behaviour of
eigenfunctions near the boundary. 
For the operator $F$, the boundary Harnack property has been obtained recently by Armstrong, Sirakov
and Smart in \cite[Appendix~A]{ASS}. Therefore one can easily adopt the techniques of \cite{BR15}
along with our results to deal with general unbounded domains.
\end{remark}

%%%%%%%%%%%%%%%%%%%%%%%%%%%%%%%%%%%%%%%%%%%%%%%%%%%%%%%%%%%%%%%%%%%%%%%%%%%%%%%%
\section*{Acknowledgements}
The research of Anup Biswas was supported in part by DST-SERB grants EMR/2016/004810 and MTR/2018/000028.
Prasun Roychowdhury was supported in part by Council of Scientific \& Industrial Research (File no.  09/936(0182)/2017-EMR-I).

%%%%%%%%%%%%%%%%%%%%%%%%%%%%%%%%%%%%%%%%%%%%%%%%%%%%%%%%%%%%%%%%%%%%%%%%%%%%%%%%
%%%%%%%%%%%%%%%%%%%%%%%%%%%%%%%%%%%%%%%%%%%%%%%%%%%%%%%%%%%%%%%%%%%%%%%%%%%%%%%%
%\bibliographystyle{plain}      % mathematics and physical sciences
%
%\bibliography{}
%%\bibliography{}

\end{document}